\newtheorem{theorem}{Theorem}[section]
\newtheorem{corollary}[theorem]{Corollary}
\newtheorem{proposition}[theorem]{Proposition}
\newtheorem{lemma}[theorem]{Lemma}
\newtheorem{definition}[theorem]{Definition}
\newtheorem{problem}[theorem]{Problem}
\theoremstyle{definition}
\newtheorem{remark}[theorem]{Remark}
\newtheorem*{rep@theorem}{\rep@title}
\newcommand{\newreptheorem}[2]{%
\newenvironment{rep#1}[1]{%
 \def\rep@title{#2 \ref{##1}}%
 \begin{rep@theorem}}%
 {\end{rep@theorem}}}
\renewcommand{\S}{\mathcal{S}} % staet complex
\newcommand{\shape}{\operatorname{shape}} % shape of a snake/partition
\newcommand{\sh}{\operatorname{sh}} % shape of a snake/partition
\newcommand{\diam}{\operatorname{diam}} % diameter
\newcommand{\Left}{L_{m,n}} %left justified robot
\newcommand{\Leftt}{L_{m,n}^+} %shiffted left justified robot
\newcommand{\Hor}{H_{m,n}}  %horizontal robot 
\newcommand{\LeftLambda}{L_{\lambda}} %left justified robot of lambda
\newcommand{\LefttLambda}{L_{\lambda}^+} %shiffted left justified robot of lambda
\newcommand{\HLambda}{H_{\lambda}}  %horizontal robot of lambda
\newcommand{\LeftLambdaP}[1]{L_{#1}} %left justified robot of lambda with parameter
\newcommand{\LefttLambdaP}[1]{L_{#1}^+} %shiffted left justified robot of lambda with parameter
\newcommand{\HLambdaP}[1]{H_{#1}}  %horizontal robot of lambda with parameter
\newcommand{\LeftLambdak}{L_{\lambda,k}} %left justified robot of lambda k
\newcommand{\LefttLambdak}{L_{\lambda,k}^+} %shiffted left justified robot of lambda k
\begin{document}
\title{\textsf{The configuration space of a robotic arm in a tunnel.}}
\author{
\textsf{Federico Ardila\footnote{\noindent \textsf{San Francisco State University, San Francisco, USA; Univ. de Los Andes, Bogot\'a, Colombia; federico@sfsu.edu.}}} \\
\and 
\textsf{Hanner Bastidas\footnote{\noindent \textsf{Departamento de Matem\'aticas, Universidad del Valle, Cali, Colombia; hanner.bastidas@correounivalle.edu.co.}}}
\and
\textsf{Cesar Ceballos\footnote{\noindent \textsf{Faculty of Mathematics, University of Vienna, Austria; cesar.ceballos@univie.ac.at.}}}
\and
\textsf{John Guo\footnote{\noindent \textsf{San Francisco State University, San Francisco, USA; jguo@mail.sfsu.edu.
\newline 
This project is part of the SFSU-Colombia Combinatorics Initiative. %It includes results from HB's undergraduate thesis at U. del Valle advised by FA and CC, and JG's undergraduate applied math project at SFSU advised by FA. \newline
FA was  supported by the US NSF CAREER Award DMS-0956178 and NSF Combinatorics Award DMS-1600609. CC was  supported by 
a Banting Postdoctoral Fellowship
of the Canadian government, a York University research grant, 
and the Austrian FWF Grant F 5008-N15. 
}}}}
\date{}
\maketitle

\begin{abstract} 
We study the motion of a robotic arm inside a rectangular tunnel. We prove that the configuration space  of all possible positions of the robot is a $\mathrm{CAT}(0)$ cubical complex. This allows us to use techniques from geometric group theory to find the optimal way of moving the arm  from one position to another. We also compute the diameter of the configuration space, that is, the longest distance between two positions of the robot.

\smallskip
\noindent
%{\sc keywords.} cubical complex -- $\mathrm{CAT}(0)$ -- robot --   poset with inconsistent pairs.
\end{abstract}

\vspace{-0.6cm}

%%%%%%%%%%%%%%%%%%%%%%%%%%%%%%%%%%%%%%

\section{\textsf{Introduction}}
We consider a robotic arm $R_{m,n}$ of length $n$ moving in a rectangular tunnel of width $m$ without self-intersecting. The robot consists of $n$ links of unit length, attached sequentially, and facing up, down, or right. The base is affixed to the lower left corner. Figure~\ref{fig:armingrid} illustrates a possible position of the arm of length 8 in a tunnel of width $2$.

%\cesar{Federico: I noticed that you changed the width to the letter $w$. I prefer to use $R_{m,n}$ where $m$ is the width. This is already used throughout Sections 5 and 6. Also $w(\lambda)$ equal the number of columns of the snake which does not really agree with the width $w$. Do you mind if we keep $m$ for width? I think you only changed it in the introduction, right?}

\begin{figure}[htbp]
	\centering
		\includegraphics[height = 1.2cm]{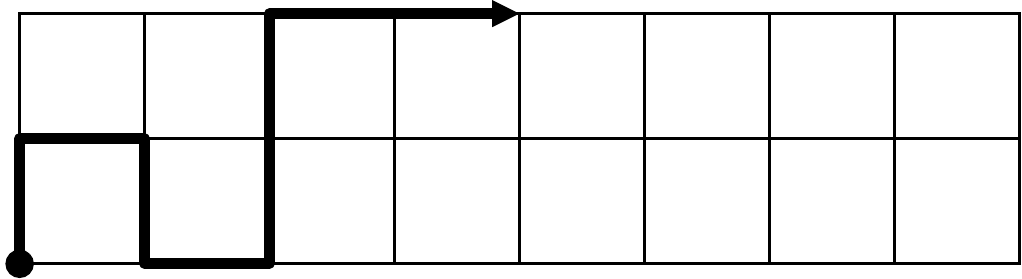}
	\caption{The robotic arm $R_{8,2}$.}
	\label{fig:armingrid}
\end{figure}

\noindent
The robotic arm may move freely using two kinds of local moves:

\noindent $\bullet$ \emph{Switch corners:} Two consecutive links facing different directions interchange directions.

\noindent $\bullet$ \emph{Flip the end:} The last link of the robot rotates $90^\circ$ without intersecting itself.

\begin{figure}[htbp]
	\centering
    \includegraphics[height = 1.5cm]{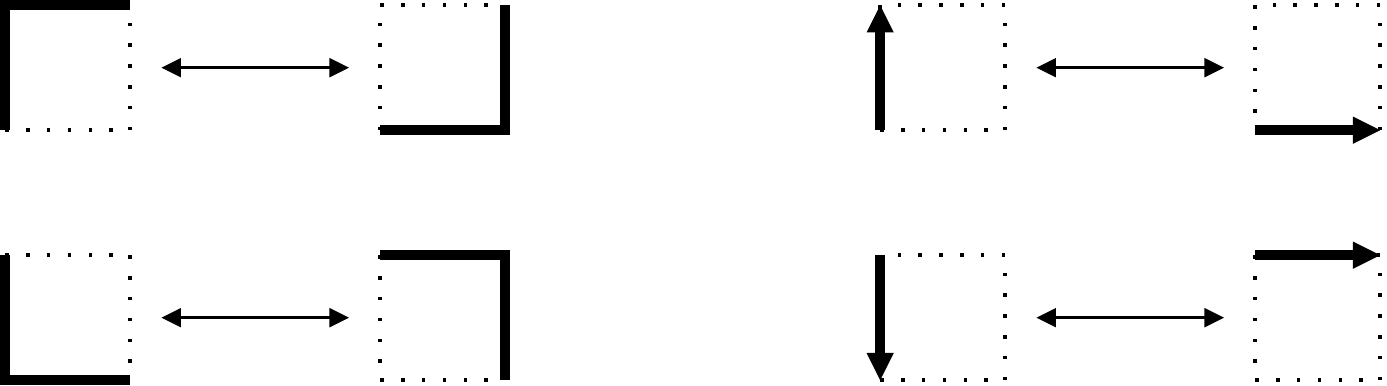}
	\caption{The two kinds of local moves of the robotic arm.}
	\label{fig:mov}
\end{figure}	

\noindent We study the following fundamental problem. 
	
\begin{problem}\label{prob:moverobot}
Move the robotic arm $R_{m,n}$ from one given position to another optimally.
\end{problem}	

When we are in a city we do not know well and we are trying to get from one location to another, we will usually consult a map of the city to plan our route. This is a simple but powerful idea. Our strategy to approach Problem \ref{prob:moverobot} is pervasive in many branches of mathematics:
we will be to build and understand the ``map" of all possible positions of the robot; we call it the \emph{configuration space} $\S_{m,n}$. Such spaces are also called \emph{state complexes}, \emph{moduli spaces}, or \emph{parameter spaces} in other fields. 
Following work of  Abrams--Ghrist~\cite{AG} in applied topology and Reeves~\cite{Re} in geometric group theory, Ardila, Owen, and Sullivant~\cite{AOS} and Ardila, Baker, and Yatchak~\cite{ABY} showed how to solve Problem \ref{prob:moverobot} for robots whose configuration space is \emph{CAT(0)}; this is a notion of non-positive curvature defined in Section \ref{sec:CAT(0)}. 
This is the motivation for our main result.

\begin{theorem}\label{th:CAT(0)}
The configuration space $\S_{m,n}$ of the robotic arm $R_{m,n}$ of length $n$ in a tunnel of width $w$ is a $\mathrm{CAT}(0)$ cubical complex.
\end{theorem}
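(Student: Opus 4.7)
My plan is to apply Gromov's combinatorial criterion: a cubical complex is CAT(0) if and only if it is simply connected and the link of every vertex is a flag simplicial complex. The cubical structure on $\S_{m,n}$ is the natural one from the Abrams--Ghrist theory of state complexes: $0$-cells are valid positions of $R_{m,n}$; $1$-cells are local moves (switch-corner or flip-end); and a set of $k$ available moves at a position spans a $k$-cube precisely when the moves are pairwise \emph{disjoint}, meaning their supports (the sets of affected links) are disjoint, so they can be executed in any order with the same final result. I would verify Gromov's two conditions separately.

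The flag condition on links is essentially local. Since a switch-corner move affects two consecutive links and a flip-end move affects only the last link, two available moves at a common position commute exactly when their supports are disjoint sets of links. Pairwise disjointness of three sets of links is equivalent to their joint disjointness, so three pairwise-commuting moves at a vertex commute jointly and span a $3$-cube. A short case check across the move types (switch/switch, switch/flip, flip/flip) confirms the flag condition for every vertex link.

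The harder step is simple connectivity, and here my strategy is to follow the approach of Ardila--Owen--Sullivant for tree space and of Ardila--Baker--Yatchak for the width-$2$ arm: construct a finite poset $P_{m,n}$ of ``elementary moves'' together with a bijection between positions of $R_{m,n}$ and order ideals of $P_{m,n}$, under which local moves correspond to adding or removing a single element. This identifies $\S_{m,n}$ with the cube complex of order ideals of $P_{m,n}$, a construction known to produce a CAT(0) cube complex and in particular a simply connected one. The main obstacle is choosing $P_{m,n}$ correctly and verifying the bijection. For $m=2$ the structure is already intricate, and for general $m$ the combinatorics grows substantially: switch-corner moves can now occur at any of the $m$ horizontal levels of the tunnel and interact with moves at neighboring levels. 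I expect the right statistic to record, for each local configuration at each height, how many switch moves have occurred in moving the arm from a chosen ``ground state'' to the current position; the preliminary task is to isolate this statistic and package it into a poset whose order ideals correspond bijectively with valid positions of $R_{m,n}$.
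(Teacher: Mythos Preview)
Your local analysis of the links is fine, and the paper agrees that the flag condition on vertex links is essentially automatic for state complexes of this kind. The substantive issue is your plan for simple connectivity.

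You propose to find a finite \emph{poset} $P_{m,n}$ and a bijection between positions of $R_{m,n}$ and the order ideals of $P_{m,n}$, with local moves corresponding to adding or deleting a single element. This cannot succeed as stated. If such a bijection existed, the $1$-skeleton of $\S_{m,n}$ would be the Hasse diagram of the distributive lattice $J(P_{m,n})$; in particular the vertex set of $\S_{m,n}$ would carry a distributive lattice structure. The paper points out explicitly that this is \emph{not} the case for $R_{m,n}$: among the examples treated so far by the Ardila--Owen--Sullivant machinery, this is the first family whose skeleton fails to be a distributive lattice. Concretely, from the fully horizontal position one can reach (via disjoint sequences of moves) two positions whose coral-snake shapes are incomparable---e.g.\ one whose second vertical step goes up and one whose second vertical step goes down---and these two positions have no common upper bound in the transition graph. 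No ordinary poset of ``elementary moves'' can model this.

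What the paper actually does is use the full strength of the Ardila--Owen--Sullivant bijection: rooted CAT(0) cube complexes correspond not to posets but to \emph{posets with inconsistent pairs} (PIPs), and vertices of the complex correspond to the \emph{consistent} order ideals. The paper introduces the \emph{coral PIP} $C_{m,n}$, whose elements are pairs $(\lambda,s)$ of a ``coral snake'' $\lambda$ and a shift $s$, with $(\lambda,s)\nleftrightarrow(\mu,t)$ declared inconsistent precisely when neither snake is an initial segment of the other. The heart of the proof is a bijection between states of $R_{m,n}$ and certain \emph{coral tableaux}, which in turn are the consistent order ideals of $C_{m,n}$. So your instinct to encode positions by a combinatorial gadget is right, but the gadget must carry an inconsistency relation, and identifying it (the coral snakes and their containment/incomparability) is exactly the missing idea.
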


\begin{figure}[htbp]
	\centering
	\includegraphics[height=4cm]{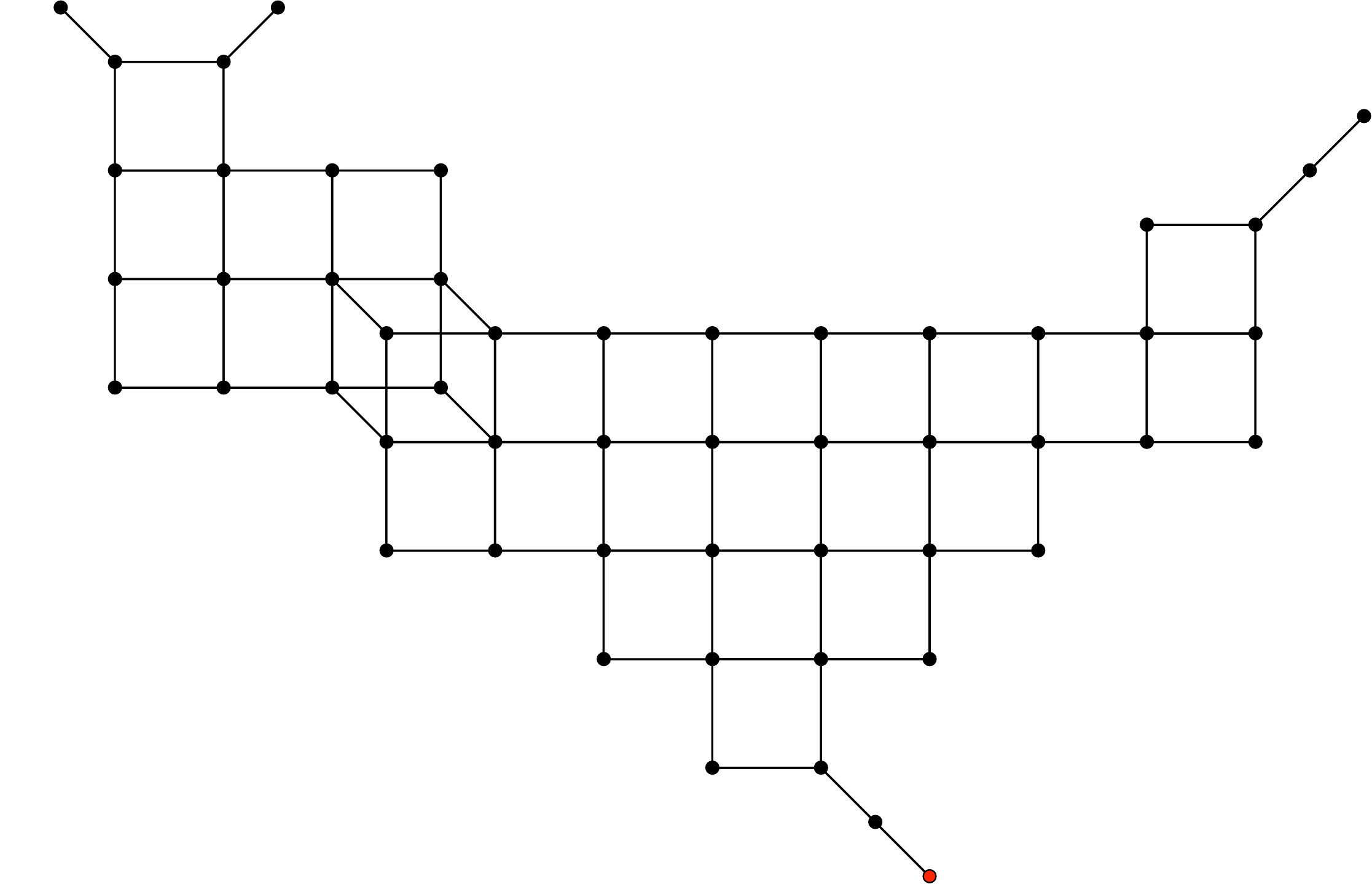}
	\caption{The configuration space $\S_{6,2}$ of the robotic arm $R_{6,2}$.}
	\label{fig:gft6}
\end{figure}

In light of \cite{ABY}, Theorem \ref{th:CAT(0)} provides a solution to Problem \ref{prob:moverobot}.

\begin{corollary}
There is an explicit algorithm, implemented in Python, to move the robotic arm $R_{m,n}$ optimally from one given position to another.
\end{corollary}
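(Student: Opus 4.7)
The plan is to combine Theorem \ref{th:CAT(0)} with the algorithmic machinery of Ardila--Baker--Yatchak \cite{ABY}, which converts any finite CAT(0) cubical complex into an efficient optimal navigation algorithm. Once we know $\S_{m,n}$ is CAT(0), the problem of moving the arm optimally becomes the problem of finding a combinatorial geodesic between two vertices of a CAT(0) cubical complex, for which \cite{ABY} already provides a solution. So the bulk of the work is to encode the input of that algorithm in our setting.

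First I would set up the data structures. A position of $R_{m,n}$ is encoded as a string in $\{U,D,R\}^n$ satisfying the tunnel and non-self-intersection constraints; the two kinds of local moves (corner switches and end flips) are exactly the edges of $\S_{m,n}^{(1)}$, and commuting pairs of local moves are the $2$-cubes. From this description, one can enumerate the set $\mathcal{H}$ of \emph{hyperplanes} of $\S_{m,n}$, each of which is determined by a parallelism class of edges; equivalently, by the specific pair of consecutive links (or the end link) that is being toggled. This is the combinatorial input required by \cite{ABY}.

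Next I would plug this data into the \cite{ABY} pipeline. Given a source position $p$ and a target position $q$, one computes the set $\mathcal{H}(p,q) \subseteq \mathcal{H}$ of hyperplanes separating $p$ from $q$; the CAT(0) property (Theorem \ref{th:CAT(0)}) guarantees that an optimal path is any path that crosses each hyperplane in $\mathcal{H}(p,q)$ exactly once and no others. The algorithm then repeatedly identifies a hyperplane in $\mathcal{H}(p,q)$ whose corresponding move is currently \emph{admissible} at $p$ (i.e., the move does not collide with the tunnel or the rest of the arm), performs that move, and iterates. By \cite{ABY} such an admissible hyperplane always exists when $p \ne q$, and the total number of moves equals $|\mathcal{H}(p,q)|$, which is the geodesic distance.

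The main obstacle I anticipate is not mathematical but bookkeeping: one must produce an explicit, efficient combinatorial description of the hyperplanes of $\S_{m,n}$ and of the admissibility test in terms of the string encoding. Once that is done, translating the routine into Python is straightforward, and the correctness and optimality of the output follow formally from \cite{ABY} together with Theorem \ref{th:CAT(0)}.
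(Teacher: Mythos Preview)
Your high-level plan --- invoke Theorem~\ref{th:CAT(0)} and feed the result into the \cite{ABY} machinery --- is exactly what the paper does, and it is enough to justify the corollary at the level of the introduction. However, the one concrete implementation claim you make is wrong: the hyperplanes of $\S_{m,n}$ are \emph{not} indexed by ``the specific pair of consecutive links (or the end link) that is being toggled.'' Already for $n=2$ the two end-flip edges $RR\!-\!RU$ and $UR\!-\!UU$ lie in different hyperplanes (there is no square making them parallel), so the end position alone does not determine the hyperplane. In general there are far more hyperplanes than link positions.

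This matters because the step you flag as ``bookkeeping'' --- an explicit, efficient description of the hyperplanes and of admissibility --- is precisely the substantive content the paper supplies. By Theorem~\ref{thm:CAT} (equivalently Proposition~\ref{conj:PIP}), the hyperplanes of $\S_{m,n}$ rooted at the horizontal arm are the numbered coral snakes $(\lambda,s)$ of the coral PIP $C_{m,n}$; states correspond to coral tableaux; and the separating set $\mathcal{H}(p,q)$, admissibility, and the normal-cube-path greedy step all have clean descriptions in that language (Sections~\ref{sec:ourPIP}--\ref{sec:algorithm}). So the corollary follows as you say, but the ``remote control'' you need is the coral PIP, not the link index.
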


The reader may visit 
\href{http://math.sfsu.edu/federico/Articles/movingrobots.html}{\texttt{http://math.sfsu.edu/federico/Articles/movingrobots.html}} 
to watch a video preview of this program or to download the Python code.

%As explained in \cite{AOS}, CAT(0) cubical complexes generalize distributive lattices, which are well known to arise in many combinatorial settings. They are in bijection with \emph{posets with inconsistent pairs}, also known in computer science as \emph{event structures}, defined in \cite{AOS, W}.

Abrams and Ghrist showed that cubical complexes appear in a wide variety of contexts where a discrete system moves according to local rules -- such as domino tilings, reduced words in a Coxeter group, and non-intersecting particles moving in a graph -- and that these cubical complexes are sometimes CAT(0)~\cite{AG}.
The goal of this paper is to illustrate how the techniques of \cite{ABY, AOS} may be used to prove that configuration spaces are CAT(0), uncover their underlying combinatorial structure, and navigate them optimally.

To our knowledge, Theorem \ref{th:CAT(0)} provides one of the first infinite families of configuration spaces which are CAT(0) cubical complexes, and the first one to 
%, and the first one that does not fit within the framework of distributive lattices, and requires the full framework of 
invoke the full machinery of \emph{posets with inconsistent pairs}  of \cite{ABY, AOS}. It is our hope that the techniques employed here will provide a blueprint for the study of CAT(0) cubical complexes in many other contexts.

%\cesar{Federico: I was thinking that we could improve a bit this paragraph after the theorem to make it sound a bit stronger. It sounds a little bit like the paper is just an illustration of something that was developed somewhere else. Also, I usually do not like to judge my work (``elegant family"?) since others should do so. ``not a exceptional one" also sounds a bit negative. Maybe also mention here that we solved Problem~\ref{prob:moverobot} by implementing an algorithm to move the robot optimally from one position to another? can we put a couple of videos in youtube?} 

We now describe the structure of the paper. In Section \ref{sec:prelim} we define more precisely the configuration space $\S_{m,n}$ of the robotic arm $R_{m,n}$. 
Section \ref{sec:Euler} is devoted to collecting some preliminary enumerative evidence for our main result,  Theorem \ref{th:CAT(0)}.
It follows from very general results of Abrams and Ghrist \cite{AG} that the configuration space $\S_{m,n}$ is a cubical complex. Also, we know from work of Gromov \cite{Gr} that $\S_{m,n}$ will be  $\mathrm{CAT}(0)$  if and only if it is contractible. Before launching into the proof  that $\S_{m,n}$ is CAT(0), we first verify in the special case $m=2$ that this space has the correct Euler characteristic, equal to $1$. We do it as follows.

\begin{theorem}\label{th:faceenum}
If $c_{n,d}$ denotes the number of $d$-dimensional cubes in the configuration space $\S_{2,n}$ for a robot of width $2$, then 
\[
 \sum_{n, d \geq 0} c_{n,d} \, x^ny^d  = 
  \frac{1+x^2+2x^3-x^4+xy+x^2y+4x^3y+x^4y+x^3y^2+2x^4y^2+x^5y^2}
{1-2x+x^2-x^3-x^4-2x^4y-2x^5y-x^5y^2-x^6y^2}.
\]
\end{theorem}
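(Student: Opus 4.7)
The plan is to realize the generating function via a transfer-matrix computation that scans the arm from base to tip. A position of $R_{2,n}$ may be encoded by the word $w_1 \cdots w_n \in \{U,D,R\}^n$ of its successive link directions. The constraints that this path stays inside the width-$2$ tunnel and does not self-intersect can be verified one letter at a time, provided one carries as auxiliary data the height of the current vertex and the direction of the preceding link; hence the set of valid positions is a regular language on a small state space.

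Each $d$-cube of $\S_{2,n}$ is, by the general construction of state complexes in \cite{AG}, a pair $(P,M)$ where $P$ is a position and $M$ is a set of $d$ pairwise-commuting local moves available at $P$. A switch-corner move involves only two consecutive links $w_i, w_{i+1}$, and the flip-the-end move involves only $w_{n-1},w_n$; two such moves commute unless they share a link, a condition visible locally. Therefore the augmented data $(P,M)$ is again recognized by a finite automaton on $\{U,D,R\}$ enriched with binary markers indicating whether a move is being selected at each position. Forming the transfer matrix $T(y)$ of this automaton and weighting each selected move by $y$, one obtains
\[
\sum_{n \ge 0} \Bigl(\sum_{d \ge 0} c_{n,d}\, y^d\Bigr) x^n \;=\; u^{\top} (I - x\, T(y))^{-1} v
\]
for suitable initial and final vectors $u, v$. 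The right-hand side is automatically a rational function of $x$, with denominator dividing $\det(I - x\, T(y))$.

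The final step is to identify this rational function with the displayed one. This is a finite symbolic computation: write out the automaton explicitly, compute $(I - x\, T(y))^{-1}$ via Cramer's rule, and simplify. The main obstacle is bookkeeping: choosing a minimal, faithful state space so that the resulting denominator is exactly $1-2x+x^2-x^3-x^4-2x^4y-2x^5y-x^5y^2-x^6y^2$ with no spurious common factor against the numerator, and making sure that a move straddling the current scanning boundary is counted with the correct multiplicity exactly once. A convenient sanity check is to expand the claimed generating function in $x$ and $y$ and compare with a direct enumeration of cubes of $\S_{2,n}$ for the first several values of $n$, which can be carried out by computer straight from the definition of the state complex.
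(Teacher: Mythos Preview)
Your transfer-matrix approach is a legitimate alternative to the paper's method. The paper instead represents each $d$-cube by its \emph{partial state}---the superposition of all $2^d$ positions, encoded as a word over the alphabet $\{r,v,\square,\llcorner\}$---and then factors every such word uniquely into irreducible blocks by cutting each time the arm touches a border of the tunnel. Summing eight explicit generating functions for the irreducible pieces and eight for the final-irreducible pieces yields $M(x,y)$ and $F(x,y)$, and the answer is recovered from $1+xC(x,y)=F(x,y)/(1-M(x,y))$. Both routes establish rationality a priori; the paper's factorization reads off numerator and denominator from two short tables, while yours would extract them from $(I-xT(y))^{-1}$.

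There is, however, a real slip in your setup. A $d$-cube is \emph{not} a single pair $(P,M)$: it has $2^d$ vertices, and every one of them, paired with the same set $M$ of $d$ moves, names that same cube. Scanning the word of $P$ with binary ``move selected here'' markers therefore enumerates pairs $(P,M)$ rather than cubes, and your automaton would compute $\sum_{n,d} c_{n,d}\,x^n(2y)^d$ instead of $\sum_{n,d} c_{n,d}\,x^n y^d$. You must canonicalize: either pick a distinguished vertex of each cube (for instance, place every selected corner in a fixed one of its two orientations and every selected end-flip in its horizontal form), or---more cleanly---abandon $P$ altogether and scan the partial state directly, which is exactly what the paper's alphabet $\{r,v,\square,\llcorner\}$ accomplishes. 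Once this is repaired the automaton exists and the remainder is, as you say, a finite symbolic computation; but as written the scheme overcounts.
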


The Euler characteristic of $\S_{2,n}$ is $\chi(\S_{2,n}) = c_{n,0} - c_{n,1} + \cdots$.  Substituting $y=-1$ above we find, in an expected but satisfying miracle of cancellation, that the generating function for $\chi(\S_{2,n})$ is $1/(1-x) = 1+x+x^2+\cdots$.We obtain:

\begin{theorem}\label{th:Euler}
The Euler characteristic of the configuration space $\S_{2,n}$ equals $1$.
\end{theorem}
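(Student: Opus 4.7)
The plan is to extract $\chi(\S_{2,n})$ directly from the generating function in Theorem \ref{th:faceenum} by substituting $y = -1$. Since a cubical complex has Euler characteristic $\chi = \sum_{d \geq 0}(-1)^d c_{d}$ where $c_d$ counts $d$-dimensional cubes, we have
\[
\sum_{n \geq 0} \chi(\S_{2,n}) \, x^n = \sum_{n, d \geq 0} c_{n,d}\, x^n (-1)^d = F(x, -1),
\]
where $F(x,y)$ is the rational function in Theorem \ref{th:faceenum}. So the claim reduces to the identity $F(x,-1) = 1/(1-x)$, which is a purely algebraic verification.

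The main step is to compute the numerator and denominator of $F(x,y)$ at $y=-1$. Collecting terms, the numerator becomes
\[
1 - x - x^3 + x^5,
\]
and the denominator becomes
\[
1 - 2x + x^2 - x^3 + x^4 + x^5 - x^6.
\]
To conclude, I would verify the polynomial factorization
\[
(1-x)(1 - x - x^3 + x^5) = 1 - 2x + x^2 - x^3 + x^4 + x^5 - x^6,
\]
which is a short direct expansion. Dividing numerator by denominator then yields $F(x,-1) = 1/(1-x) = \sum_{n \geq 0} x^n$, and reading off the coefficient of $x^n$ gives $\chi(\S_{2,n}) = 1$ for all $n \geq 0$.

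There is no substantive obstacle in this argument; it is simply a bookkeeping check, which is why the paper describes it as a ``miracle of cancellation.'' Of course, the deeper content is that such a cancellation should occur \emph{a priori}: once Theorem \ref{th:CAT(0)} is established, every $\S_{2,n}$ is contractible (by Gromov's characterization), so Euler characteristic $1$ is forced. In that sense, this short calculation serves as an independent enumerative sanity check supporting Theorem \ref{th:CAT(0)} before the geometric proof is carried out.
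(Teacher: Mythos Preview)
Your proposal is correct and follows exactly the paper's own approach: substitute $y=-1$ into the generating function of Theorem~\ref{th:faceenum}, compute the resulting numerator $1-x-x^3+x^5$ and denominator $1-2x+x^2-x^3+x^4+x^5-x^6$, and cancel the common factor $1-x$ to obtain $\sum_n \chi(\S_{2,n})x^n = 1/(1-x)$. The only difference is that you spell out the factorization check explicitly, which the paper leaves implicit.
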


In Section~\ref{sec:CAT(0)} we collect the tools we use to prove Theorem \ref{th:CAT(0)}. Ardila, Owen, and Sullivant~\cite{AOS} gave a bijection between $\mathrm{CAT}(0)$ cubical complexes~$X$ and combinatorial objects $P(X)$ called \emph{posets with inconsistent pairs} or {PIP}s. %(See also \cite{Ro, Sa, W}.) 
 The PIP $P(X)$ is usually much simpler, and serves as a ``remote control" to navigate the space $X$. This bijection allows us to prove that a (rooted) cubical complex is $\mathrm{CAT}(0)$ by identifying its corresponding PIP.\footnote{It is worth noting that PIPs are known as \emph{event systems} \cite{W} in computer science and are closely related to \emph{pocsets} \cite{Ro, Sa} in geometric group theory.}

We use this technique to prove Theorem \ref{th:CAT(0)} in Section \ref{sec:ourPIP}. To do this, we introduce the \emph{coral PIP} $C_{m,n}$ of \emph{coral tableaux}, illustrated in Figure \ref{fig:coralPIP}. By studying the combinatorics of coral tableaux, we show the following:

\begin{proposition}\label{conj:PIP}
The \emph{coral PIP} $C_{m,n}$ is the PIP that certifies that the configuration space $\S_{m,n}$ is a CAT(0) cubical complex.
\end{proposition}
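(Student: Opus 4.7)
The plan is to invoke the Ardila--Owen--Sullivant correspondence from Section~\ref{sec:CAT(0)}: to each PIP $P$ there is an associated rooted CAT(0) cubical complex $X(P)$ whose vertices are the consistent order ideals of $P$, whose edges connect two ideals differing by a single element, and whose higher-dimensional cubes correspond to antichains of commuting ``next moves.'' To prove Proposition~\ref{conj:PIP}, I would exhibit an isomorphism of rooted cubical complexes $\Phi \colon X(C_{m,n}) \xrightarrow{\sim} \S_{m,n}$ and then appeal directly to that bijection.

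First I would fix the root: the fully extended horizontal position of $R_{m,n}$ along the floor of the tunnel, which should correspond to the empty ideal $\emptyset \in X(C_{m,n})$. Next I would define $\Phi$ on vertices by associating to each consistent order ideal $I \subseteq C_{m,n}$ a coral tableau of shape~$I$, and reading off from this tableau a unique valid position of the robot. Concretely, each cell of a coral tableau should encode a local ``disturbance'' (a switched corner or a flipped end) applied to the root, and the order of $C_{m,n}$ should record precedence constraints saying which disturbances must be performed first in order for another to become available.

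The next step is the compatibility with moves. I would verify that the two types of local moves on $R_{m,n}$ (switch corners, flip the end) correspond bijectively to adding or removing a single maximal element of the associated ideal, and that two local moves at a given position commute in $\S_{m,n}$ if and only if the two corresponding elements of $C_{m,n}$ are unrelated (an antichain) and can simultaneously be added to the ideal. This will upgrade the vertex bijection to an isomorphism of cube complexes.

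The main obstacle lies in proving that the inconsistent pairs of $C_{m,n}$ correctly capture geometry: a down-closed set of cells in a coral tableau will yield a legal (non-self-intersecting, tunnel-confined) position of the robot precisely when it contains no inconsistent pair. This is the content of the ``coral tableau combinatorics'' alluded to in the introduction. I expect this to split into two symmetric verifications: (i) if $I$ is consistent, then the robot position read off from the tableau avoids both the ceiling/floor of the tunnel and self-intersection, by induction on $|I|$ following a linear extension; and (ii) if some inconsistent pair is present, then the purported robot configuration exhibits a concrete geometric collision, so the pair \emph{must} be declared inconsistent. Once the order relations and inconsistent pairs of $C_{m,n}$ are matched with the reachability and geometric-obstruction structure of $\S_{m,n}$ in this way, the Ardila--Owen--Sullivant theorem yields $\S_{m,n} \cong X(C_{m,n})$ as rooted cubical complexes, and the CAT(0) property is automatic from the PIP side.
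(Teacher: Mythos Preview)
Your overall framework is right: root at the horizontal position, invoke the Ardila--Owen--Sullivant bijection, and build a vertex bijection between consistent order ideals of $C_{m,n}$ and positions of $R_{m,n}$, then upgrade to cubes. But your description of the inconsistent pairs is off in a way that matters. You write that an inconsistent pair should force ``a concrete geometric collision'' (self-intersection or hitting the tunnel boundary). That is not what inconsistency encodes here. Two elements $(\lambda,s)$ and $(\mu,t)$ are inconsistent precisely when neither snake $\lambda$ nor $\mu$ is an initial segment of the other. A robot position has a single well-defined coral-snake shape; an order ideal containing both $(\lambda,s)$ and $(\mu,t)$ with incomparable $\lambda,\mu$ simply fails to correspond to \emph{any} position, not because of a collision but because there is no single snake containing both. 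Your proposed verification (ii) would therefore not go through as stated: you would be looking for a collision that isn't there.

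The paper sidesteps this by a decomposition you don't mention. It stratifies both sides by shape: for each coral snake $\lambda$, it takes the sub-PIP $C_{m,n}^\lambda$ of elements whose snake is $\preceq\lambda$ (which has \emph{no} inconsistent pairs) and the subcomplex $\S_{m,n}^\lambda$ of positions with shape $\preceq\lambda$. On each piece the vertex set is a distributive lattice, and Birkhoff's theorem identifies its join-irreducibles with the \emph{tight} coral tableaux, which are exactly the elements of $C_{m,n}^\lambda$; this gives $\S_{m,n}^\lambda \cong X(C_{m,n}^\lambda)$ without ever touching inconsistency. The inconsistent pairs then enter only in the gluing step, where one checks $\S_{m,n}^\lambda \cap \S_{m,n}^\mu = \S_{m,n}^{\lambda\wedge\mu}$ and likewise for $X$. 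This is cleaner than a direct global bijection and explains structurally why inconsistency is about branching of shapes rather than geometric obstruction. Your direct approach could be made to work, but you would need to correct the role of inconsistency and essentially rediscover the join decomposition $T = \bigvee T_i$ into tight tableaux that the paper uses.
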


\begin{figure}[h]
	\centering
	\includegraphics[width = 0.655 \textwidth]{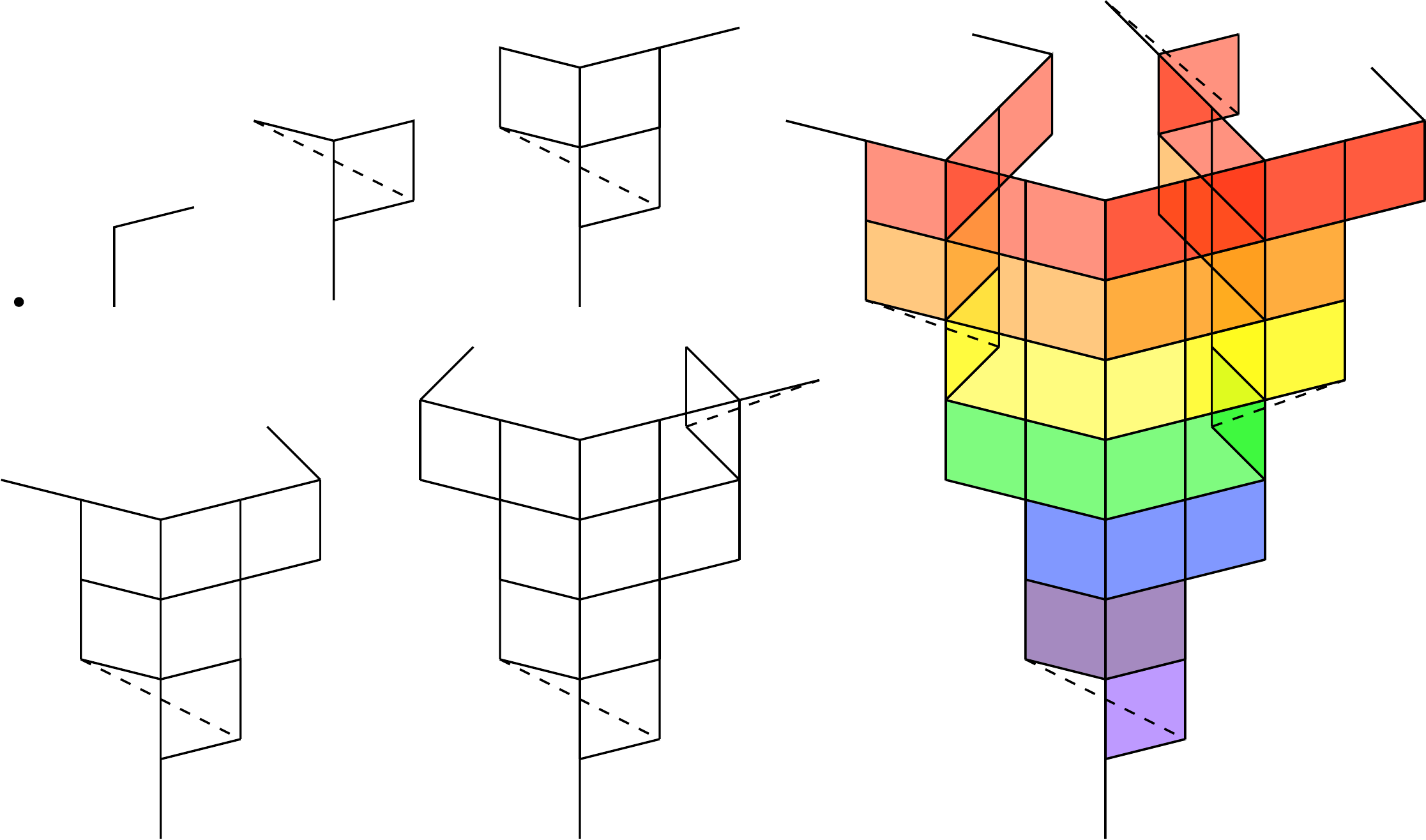}
	\caption{The coral PIPs (``remote controls") for the robotic arms of length $1,2,3,4,5,6,9$ in a tunnel of width $2$.}
	\label{fig:coralPIP}
\end{figure}

In Section \ref{sec:diameter} we use the combinatorics of coral tableaux to find a combinatorial formula for the distance between any two positions of the robotic arm. This allows us to find the longest of such distances --  that is, the diameter of the configuration space $\S_{m,n}$ in the $l_1$-metric -- answering a question of the first author. \cite{ArdilaECCO}

\begin{theorem}
The diameter of the transition graph of the robot $R_{m,n}$ of length $n$ in a tunnel of width $m\geq 2$ is 
\[
\diam G(R_{m,n}) =
\left\{ \begin{array}{lcl}
        d(\Left,\Hor) & \mbox{for} & n<6 \\ 
	d(\Left,\Leftt)  & \mbox{for} & n\geq 6,
                \end{array}\right.
\]
where $\Left=u^mrd^mru^mr\dots$ is the \emph{left justified} position, $\Leftt=(urdr)L_{m,n-4}$ and $\Hor$ is the \emph{fully horizontal} position (see Figure~\ref{fig:robot_left}). A precise formula is stated in Theorem~\ref{thm:diameter}.
\end{theorem}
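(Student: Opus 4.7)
The plan is to leverage the correspondence between positions of $R_{m,n}$ and consistent order ideals of the coral PIP $C_{m,n}$ established in Proposition~\ref{conj:PIP}. A standard fact about CAT(0) cube complexes (used throughout \cite{AOS, ABY}) is that the graph distance between two vertices equals the number of hyperplanes that separate them, which in PIP language translates into:
if $P_1, P_2$ are positions of the robot with corresponding consistent order ideals $I_1, I_2 \subseteq C_{m,n}$, then
\[
d(P_1, P_2) = |I_1 \triangle I_2|.
\]
Thus the diameter of $G(R_{m,n})$ equals the maximum symmetric difference between two consistent ideals of $C_{m,n}$. My first step would be to translate this maximization problem into the combinatorial language of coral tableaux.

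Next, I would explicitly identify the ideals $I_L, I_L^+, I_H$ of $C_{m,n}$ corresponding to the positions $\Left$, $\Leftt$, and $\Hor$. The ``left justified'' position $\Left$ is extremal in the sense that its coral tableau occupies one of the two natural corners of the PIP; similarly $\Hor$ is the other extreme, corresponding to dual choices of orientation at each coral cell. By reading off the coral tableaux directly, I would compute the explicit cardinalities $|I_L \triangle I_H|$ and $|I_L \triangle I_L^+|$ as functions of $m$ and $n$, giving the ``precise formula'' referenced in Theorem~\ref{thm:diameter}. The comparison between these two quantities is what produces the dichotomy at $n = 6$: for short arms the horizontal position is genuinely antipodal, but once $n$ is large enough, a small local perturbation $(urdr)$ at the end of $\Left$ produces incompatibilities with more elements of the PIP than the global flattening to $\Hor$ does.

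The main obstacle is the upper bound: showing that \emph{no} pair of consistent ideals has larger symmetric difference. My approach would be to decompose the coral PIP $C_{m,n}$ into ``column strips'' or ``coral layers'' indexed by the cells of the $m \times n$ tunnel, and to bound, layer by layer, the contribution any pair $(I_1, I_2)$ can make to $|I_1 \triangle I_2|$ subject to the consistency (inconsistent-pair) constraints. In each layer the inconsistent pairs force a trade-off: including a given element in $I_1 \setminus I_2$ blocks certain other elements from appearing in either ideal. A careful accounting of these trade-offs should show that the configurations $(I_L, I_H)$ and $(I_L, I_L^+)$ saturate the layer-wise bounds, and then matching with the explicit counts from the previous step finishes the proof.

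The most delicate point will be making the upper-bound argument tight: one must ensure that the local trade-offs across different layers can be combined globally, which requires the chosen ``antipodes'' to realize the best local choice in every layer simultaneously. I expect this to reduce to a finite check for $n < 6$ (where the coral PIP is small and a direct verification suffices) and to an inductive argument on $n$ for $n \geq 6$, in which the recursive structure of $C_{m,n}$ --- each new unit length of the arm adds a predictable block of PIP elements --- is used to propagate the bound. The switch at $n=6$ should emerge naturally as the smallest $n$ for which the recursive contribution of the $\Leftt$-type modification outpaces that of the $\Hor$-type modification.
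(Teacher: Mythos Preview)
Your starting identity $d(P_1,P_2)=|I_1\triangle I_2|$ is correct and is essentially equivalent to the paper's Proposition~\ref{prop:distance}, which repackages the same quantity in coral-tableau coordinates as $|w|+|v-v'|+|w'|$. Two points in your plan need revision, however.

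First, a small but telling correction: $\Hor$ is not ``the other extreme corresponding to dual choices of orientation at each coral cell.'' The fully horizontal arm has no vertical links, its coral tableau is empty, and its ideal $I_H$ is the empty ideal---the root of the complex. Thus $d(\Left,\Hor)=|I_L|$, not a symmetric difference of two large antipodal ideals.

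Second, and more seriously, the upper-bound strategy you sketch---decomposing $C_{m,n}$ into ``column strips'' or ``coral layers indexed by cells of the tunnel'' and bounding the symmetric difference layer by layer---does not match the structure of the coral PIP and I do not see how to make it work. The elements of $C_{m,n}$ are pairs (coral snake, integer), and the inconsistent pairs are governed entirely by incomparability of the snake \emph{shapes}, not by tunnel cells. There is no evident layer decomposition along which the trade-offs become independent, and the induction on $n$ does not simply ``add a predictable block,'' since lengthening the arm enlarges every sheet of the PIP at once.

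The paper's organizing idea, which is the missing ingredient in your outline, is to filter pairs $(P,P')$ by their \emph{intersection shape} $\lambda=\shape(P)\wedge\shape(P')$, the longest common initial snake of the two coral tableaux. For each fixed $\lambda$ the extremal pair can be identified explicitly (Lemma~\ref{lem:d1}): it is $(\LeftLambda,\HLambda)$ when $\lambda$ is a side snake, and one of $(\LeftLambda,\HLambda)$ or $(\LeftLambda,\LefttLambda)$ when $\lambda$ is a middle snake. One then proves (Lemma~\ref{lem:d2}) that both candidate distances are strictly decreasing as $\lambda$ grows, so the global maximum is attained at the minimal $\lambda$, namely $\lambda=\emptyset$ (giving $d(\Left,\Hor)$) or $\lambda=\square$ (giving $d(\Left,\Leftt)$). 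The dichotomy at $n=6$ then drops out of a direct comparison of these two explicit values. Replacing your layer decomposition with this intersection-shape filtration is what makes the upper bound go through.
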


Finally, in Section \ref{sec:algorithm} we discuss our solution to Problem \ref{prob:moverobot}. 
As explained in \cite{ABY}, we use the coral PIP as a remote control for our robotic arm. This allows us to implement an algorithm to move the robotic arm $R_{m,n}$ optimally from one position to another.

\section{\textsf{The configuration space}}\label{sec:prelim}
 
Recall that $R_{m,n}$ is a robotic arm consisting of $n$ links of unit length attached sequentally, and facing up, down, or right. The robot is inside a rectangular tunnel of width $m$ and pinned down at the bottom left corner of the tunnel. It moves inside the tunnel without self-intersecting, by switching corners and flipping the end as illustrated in Figure \ref{fig:mov}. 
 
In this section we describe the configuration space $\mathcal{S}_{m,n}$ of all possible positions of the robot $R_{m,n}$. 
We begin by considering the \emph{transition graph} $G(R_{m,n})$ whose vertices are the possible states of the robot, and whose edges correspond to the allowable moves between them.
Figure~\ref{fig:grafodetrancisionn=4} illustrates the transition graph $G(R_{2,4})$
 of a robotic arm of length $4$.

\begin{figure}[htbp]
	\centering
		\includegraphics[width = \textwidth]{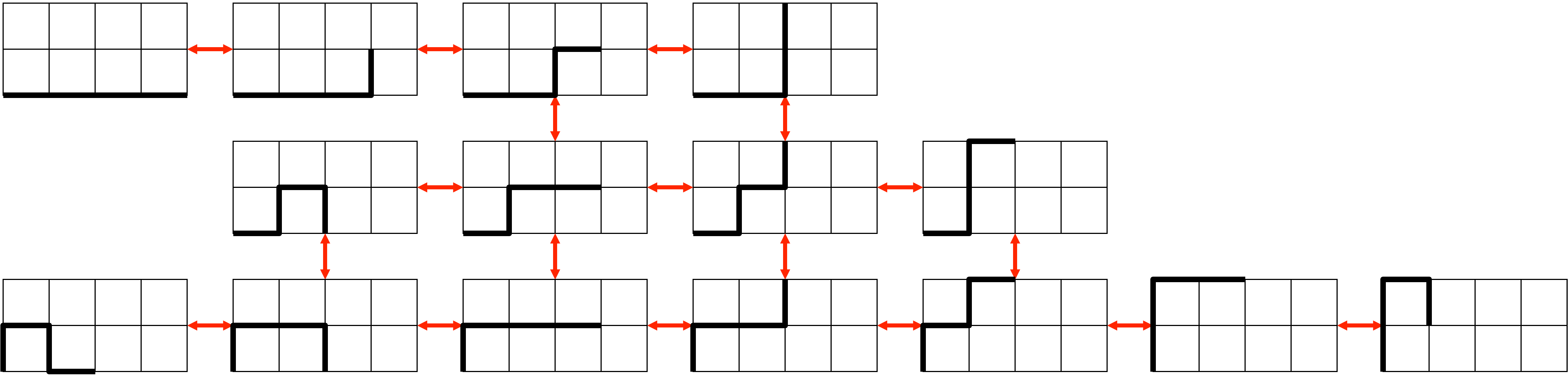}
	\caption{The transition graph of a robotic arm of length $4$.}
	\label{fig:grafodetrancisionn=4}
\end{figure}

As these examples illustrate, each one of these graphs is the $1$-skeleton of a cubical complex. For example, consider a position $u$ which has two legal moves $a$ and $b$ occuring in disjoint parts of the arm. We call $a$ and $b$ \emph{physically independent} or  \emph{commutative} because $a(b(u)) = b(a(u))$. In this case, there is a square connecting $u, a(u), b(a(u))=a(b(u)),$ and $b(u)$ in $G(R_{m,n})$. 

More generally, we obtain the configuration space by filling all the cubes of various dimensions that we see in the transition graph. Let us make this precise.

\begin{definition} \label{def:configuration space}The \emph{configuration space} or \emph{state complex} $\mathcal{S}_{m,n}$ of the robot $R_{m,n}$ is  the following cubical complex. The   vertices correspond to the states of $R_{m,n}$. An edge between vertices $u$ and $v$ corresponds to a legal move which takes the robot between positions~$u$ and~$v$. 
The $k$-cubes correspond to $k$-tuples of commutative moves: 
Given $k$ such moves which are applicable at a state $u$, we can obtain $2^k$ different states from $u$ by performing a subset of these $k$ moves; these are the vertices of a $k$-cube in $\mathcal{S}_{m,n}$. We endow $\mathcal{S}_{m,n}$ with a Euclidean metric by letting each $k$-cube be a unit cube.
\end{definition}

\noindent Figure \ref{fig:gft6} shows the configuration space $\S_{2,6}$; every square and cube in the diagram is filled in.

\section{\textsf{Face enumeration and the Euler characteristic of $\S_{2,n}$} \label{sec:Euler}}

The main structural result of this paper, Theorem \ref{th:CAT(0)}, is that the configuration space $\S_{m,n}$ of our robot is a \emph{CAT(0) cubical complex}. This is a subtle metric property defined in Section \ref{sec:CAT(0)} and proved in Section \ref{sec:ourPIP}. As a prelude, this section is devoted to proving a partial result in that direction. 
It is known \cite{BH, Gr} that CAT(0) spaces are contractible, and hence have Euler characteristic equal to $1$. We now prove:

\begin{reptheorem}{th:Euler}
The Euler characteristic of the configuration space $\S_{2,n}$ equals $1$.
\end{reptheorem}

\noindent This provides enumerative evidence for our main result in width $m=2$. While we were stuck for several weeks
trying to prove Theorem \ref{th:CAT(0)}, we found this evidence very encouraging.

To prove Theorem \ref{th:Euler}, our strategy is to compute the $f$-vector of $\S_{2,n}$.
Recall that the \emph{$f$-vector} of a $d$-dimensional polyhedral complex $X$ is $f_X=(f_0,f_1,\dots,f_d)$ where $f_k$ is the number of $k$-dimensional faces. The Euler characteristic of $X$ is 
$\chi (X) =  f_0 - f_1 + \dots + (-1)^d f_d$. Table~\ref{tab:nestarm} shows the $f$-vectors of the cubical complexes of the robotic arms of length $n\leq 6$. For instance, the complex of Figure \ref{fig:gft6} contains $53$ vertices, $81$ edges, $30$ squares, and $1$ cube. We now carry out this computation for all $n$.

\begin{table}[h]
	\centering
		\begin{tabular}{|c|c|c|c|c|c|c|}
		\hline
		$n$ &  $f_0$ & $f_1$ & $f_2$ & $f_3$ & $\chi (\S_{2,n})$  \\ \hline
		1 &  2 & 1 & 0 & 0 & 1 \\ \hline
		2 &  4 & 3 & 0 & 0 & 1 \\ \hline
		3 &  8 & 8 & 1 & 0 & 1 \\  \hline
		4 &  15 & 18 & 4 & 0 & 1 \\ \hline
		5 &  28 & 38 & 11 & 0 & 1 \\ \hline
		6 &  53 & 81 & 30 & 1 & 1 \\ \hline
		\end{tabular}
	\caption{The $f$-vectors of the cubical complexes $\S_{2,n}$ for arms of length $n\leq 6$.}
	\label{tab:nestarm}
\end{table}

%
%\begin{table}[h]
%	\centering
%		\begin{tabular}{|c|c|c|c|c|c|c|}
%		\hline
%		$n$ & Number & $f_0$ & $f_1$ & $f_2$ & $f_3$ & \hbox{Euler characteristic}  \\
%		 &  of states & & & & & $\chi (\S_{2,n})$ \\ \hline
%		1 & 2  & 2 & 1 & 0 & 0 & 1 \\ \hline
%		2 & 4  & 4 & 3 & 0 & 0 & 1 \\ \hline
%		3 & 8  & 8 & 8 & 1 & 0 & 1 \\  \hline
%		4 & 15  & 15 & 18 & 4 & 0 & 1 \\ \hline
%		5 & 28  & 28 & 38 & 11 & 0 & 1 \\ \hline
%		6 & 53  & 53 & 81 & 30 & 1 & 1 \\ \hline
%		\end{tabular}
%	\caption{The $f$-vectors of the cubical complexes $\S_{2,n}$ for arms of length $n\leq 6$.}
%	\label{tab:nestarm}
%\end{table}

\vspace{-.5cm}

\subsection{\textsf{Face enumeration}}
We now compute the generating function for the $f$-vectors of the configuration spaces $\S_{2,n}$. We proceed in several steps.

\subsubsection{\textsf{Cubes and partial states}}
Consider a $d$-cube in the configuration space $\S_{2,n}$; it has $2^d$ vertices. If we superimpose the corresponding $2^d$ positions of the robotic arm, we obtain a sequence of edges, squares, and possibly a ``claw" in the last position, as illustrated in Figure \ref{fig:ejemest}. The number of squares (including the claw if it is present) is $d$, corresponding to the $d$ physically independent moves being represented by this cube. We call the resulting diagram a \emph{partial state}, and let its \emph{weight} be $x^ny^d$. The partial states of weight $x^ny^d$ are in bijection with the $d$-cubes of $\S_{2,n}$.

\begin{figure}[h]
 \centering
 \includegraphics[height = 1.7cm]{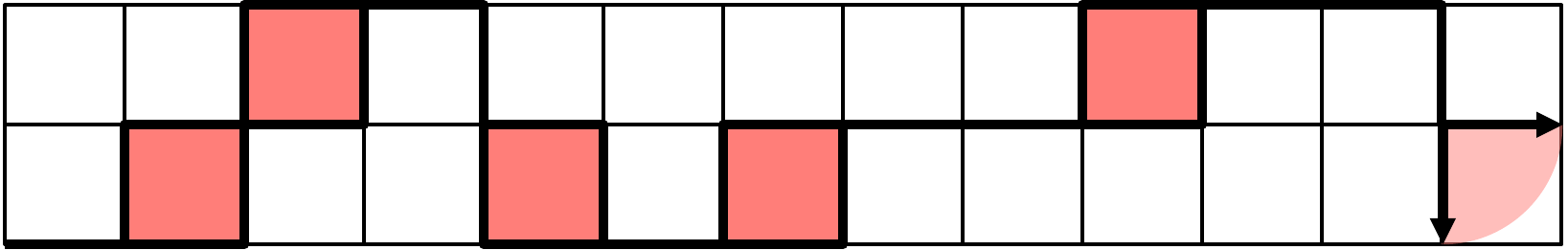}
 \caption{A partial state corresponding to a $6$-cube in the configuration space $\S_{2,20}$.}   \label{fig:ejemest}
\end{figure}

Each partial state gives rise to a word in the alphabet $\{r, v, \ell, \square, \llcorner\}$, where: 

\noindent $\bullet$
$r$ represents a horizontal link of the robot facing to the right. Its weight is $x$.  

\noindent $\bullet$
$v$ represents a vertical link. Its weight is $x$. 

\noindent $\bullet$
$\square$ represents a square, which comes from a move that switches corners of two consecutive links facing different directions. Its weight is $x^2y$. 
  \begin{figure}[h]
  \centering
  \includegraphics[height = 0.9cm]{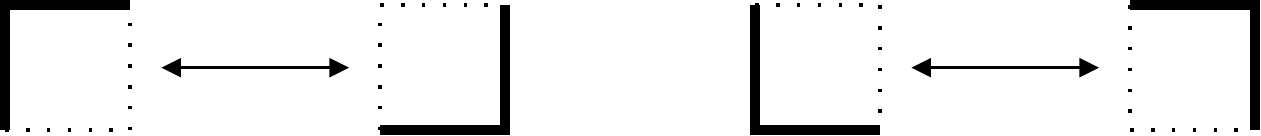}
  \label{fig:box}
  \end{figure}

\noindent $\bullet$
$\llcorner$ represents a claw, which comes from a move that flips the end of the robot, with the horizontal link facing to the right. Its weight is $xy$.
  \begin{figure}[h]
  \centering
  \includegraphics[height = 1.1cm]{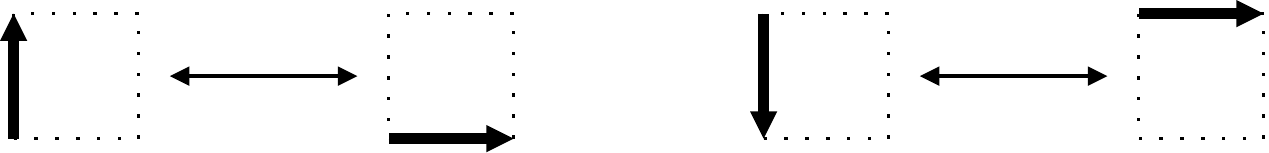}
  \label{fig:l}
  \end{figure}

For example, the partial state of Figure \ref{fig:ejemest} gives rise to the word $w=r \square \square r v \square r \square r r \square r r v \llcorner$.
The weight of the partial state is the product of the weights of the individual symbols; in this case it is $x(x^2y)(x^2y)xx(x^2y)x(x^2y)xx(x^2y)xxx(xy) = x^{20}y^6$.
It is worth remarking that this word does not determine the partial state uniquely; the reader is invited to construct another state giving rise to the same word $w$  above.

\subsubsection{\textsf{Factorization of partial states into irreducibles}}
Our next goal is to use generating functions to enumerate all \emph{partial states} according to their length and dimension. The key idea is that we can ``factor" a partial state uniquely as a concatenation of irreducible factors. Each time the partial state enters one of the borders of the tunnel, we start a new factor. For example, the factorization of the partial state of Figure~\ref{fig:ejemest} is shown in Figure~\ref{fig:factors}. 

\begin{figure}[h]
 \centering
 \includegraphics[height = 1.7cm]{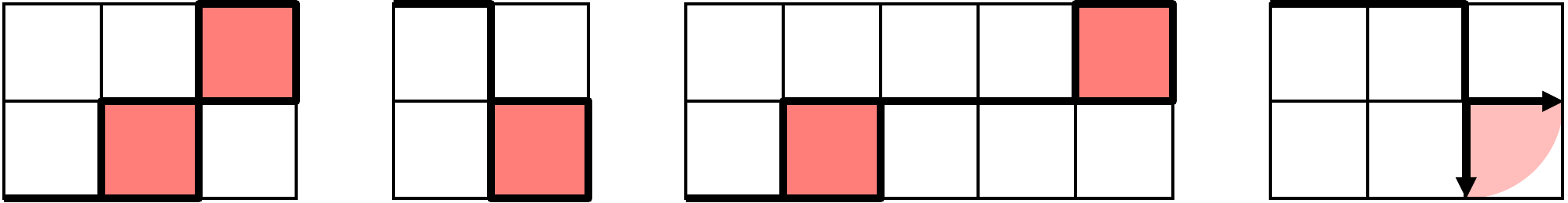}
 \caption{The partial state of Figure \ref{fig:ejemest} has a factorization of the form $M_1 M_5 M_1 F_7$. (See Tables \ref{tab:movfuncgen} and \ref{tab:final}.)}
  \label{fig:factors}
\end{figure}

\begin{definition} Let $P$ be the set of all partial states of robotic arms in a tunnel of width $2$.

\noindent (a) A partial state of the robot is called \emph{irreducible} if 

 $\bullet$ its first step is a horizontal link along the bottom border of the tunnel, and 

 $\bullet$ 
its final step is vertical or square, and is its first arrival at the same or opposite border. 

\noindent(b) A partial state of the robot is called \emph{irreducible final} if it is empty or

 $\bullet$  its first step is a horizontal link along the bottom border of the tunnel, and

 $\bullet$  either its final step is a claw which is also its first arrival at the same or opposite border, 

 or it never arrives at the same or the opposite border.

\noindent 
Let $M$ and $F$ be the sets of irreducible and irreducible final partial states.
\end{definition}

Let $\S=\bigcup_{n=0}^{\infty}\S_{2,n}$ be the disjoint union of the configuration spaces of all robotic arms of all lengths in width 2. Let $B^*$ be the collection of all words that can be made with alphabet~$B$. For instance, $a^* = \{ \emptyset, a, aa, aaa, aaaa,\dots \}$ and  $\{ a, b\}^* = \{ \emptyset, a, b, aa, ab, ba, bb, aaa, aab, \dots \}$.

\begin{proposition}\label{prop:MF}
The partial states in $\S$ starting with a right step $r$ are in weight-preserving bijection with the words in $M^*F$; that is, each partial state in $\S$ corresponds to a unique word of the form $m_1 m_2 \ldots m_\ell f$ 
with $m_i \in M$ and~$f \in F$.
\end{proposition}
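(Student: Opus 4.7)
The plan is to construct the bijection explicitly via a greedy, left-to-right parsing of each partial state into irreducible pieces. Given a partial state $s \in \S$ whose first symbol is $r$, I scan the corresponding word left to right, tracking the arm's current endpoint, and cut immediately after any symbol (a $v$ or a $\square$) that brings the arm to a border of the tunnel, provided this is the first border arrival since the start of the current factor. Let $m_1, \ldots, m_\ell$ denote the pieces produced by this rule, and $f$ the remaining tail.

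The next step is to verify that each $m_i \in M$ and that $f \in F$. For $m_1$ this is immediate: by hypothesis it starts with $r$ along the bottom border, and by construction it ends at the first border arrival via a vertical or square step. For $i \geq 2$, the factor $m_i$ begins at the border endpoint of $m_{i-1}$. Because the arm cannot self-intersect, a border arrival via a vertical link or a square must be followed by a horizontal link; a vertical continuation would either exit the tunnel or immediately retrace the link just placed. Invoking the top-bottom reflection symmetry of the width-$2$ tunnel, each $m_i$ thus satisfies the defining axioms of $M$ in its own frame. A parallel analysis, together with the fact that a claw $\llcorner$ can only appear as the final symbol of a partial state, shows that $f \in F$: either $f$ is empty, or it ends in its first border arrival via a claw, or it never reaches a border at all.

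Uniqueness of the factorization is clear, since the cut points are intrinsic to the geometry of $s$: they are precisely the indices where the arm first touches a border since the last cut. Any alternative factorization into elements of $M^* F$ would violate the defining conditions of $M$, since an element of $M$ must contain no internal border contacts between its starting and ending borders. Weight preservation is automatic because the weight of each symbol multiplies over concatenation, so the weight of $m_1 \cdots m_\ell f$ equals the weight of $s$.

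The step I expect to require the most care is verifying that each intermediate factor $m_i$ begins with a horizontal $r$-link along its starting border. While this follows intuitively from non-self-intersection, a rigorous case analysis over the possible terminal configurations of the preceding factor — a plain vertical step, or a square in each of its L-shape orientations (right-up, up-right, right-down, down-right) — together with the width-$2$ symmetry is where I would invest the most attention when writing the full proof.
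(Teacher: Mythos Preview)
Your proposal is correct and follows essentially the same approach as the paper: greedily cut the partial state at each first border arrival, then invoke the top--bottom reflection symmetry to identify each resulting piece with an element of $M$ (or $F$ for the tail). The paper's proof is much terser---it simply asserts the factorization into pieces $m_i^{\pm}$ and $f^{\pm}$ (where $p^{-}$ denotes the reflection of $p$), and then observes that the choice of sign at each stage is forced by the preceding pieces, which is exactly your ``in its own frame'' remark. One small point worth making explicit in your write-up is the inverse direction of the bijection: given an abstract word $m_1\cdots m_\ell f \in M^*F$, you reconstruct the partial state by laying down each $m_i$ (reflected or not, as dictated by where the previous piece ended), and this is well-defined precisely because the reflection is determined; your argument currently emphasizes injectivity more than surjectivity.
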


\begin{proof}
It is clear from the definitions that every partial state that starts with a horizontal step $r$ factors uniquely as a concatenation $m_{1}^{\pm} m_{2}^{\pm} \ldots m_{\ell}^{\pm} f^{\pm}$ where each $m_i \in M$, $f \in F$, and $p^{\pm}$ equals $p$ or its reflection $p^-$ across the horizontal axis. It remains to observe that whether $m_i^\pm$ is $m_i$ or $m_i^-$, and whether $p^\pm$ is either $p$ or $p^-$, is determined completely by the previous terms of the sequence. 
\end{proof}

\begin{corollary} \label{cor:MF}  If the generating functions for partial states, irreducible partial states, and irreducible final partial states are $C(x,y), M(x,y), F(x,y)$ respectively, then
\[
1+xC(x,y) = \frac{F(x,y)}{1-M(x,y)}.
\]
\end{corollary}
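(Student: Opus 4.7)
The plan is to interpret both sides of the identity combinatorially and appeal to Proposition~\ref{prop:MF}. On the right-hand side, the standard transfer from unambiguous regular languages to ordinary generating functions turns the Kleene star $M^*$ into $1/(1-M(x,y))$ and concatenation with $F$ into multiplication by $F(x,y)$; hence the generating function of words in $M^*F$, counted by weight, is exactly $F(x,y)/(1-M(x,y))$. This step is essentially formal, provided the factorization is weight-preserving and unambiguous, which is precisely what Proposition~\ref{prop:MF} gives us.

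For the left-hand side, I would reinterpret $1+xC(x,y)$ as the generating function for the disjoint union of the empty partial state (contributing $1$) and all partial states whose first link is a horizontal step $r$ (contributing $xC(x,y)$). The second summand comes from a length-preserving bijection between arbitrary partial states in $\S$ and partial states starting with $r$: given $P\in\S$, prepend a single horizontal link along the bottom border to form $r\cdot P$; inversely, delete the leading $r$ and shift the remainder one unit to the left. This operation adds or removes a single horizontal link of weight $x$ and preserves the number of squares and claws, so it transforms the generating function $C(x,y)$ into $xC(x,y)$.

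Combining these two descriptions with Proposition~\ref{prop:MF} yields the identity: the collection of partial states enumerated by $1+xC(x,y)$ matches, bijectively and weight by weight, the words of $M^*F$, with the empty partial state corresponding to the empty word. Equating the two generating functions gives the claimed formula. The only mild obstacle is matching the empty case on both sides — specifically, ensuring that the constant $1$ of $1+xC(x,y)$ corresponds to the empty word in $M^*F$ (whose empty $M^*$ and empty $F$ factor are both trivial) — but once that bookkeeping is settled, the rest is purely formal and requires no further combinatorial analysis beyond what is already encoded in Proposition~\ref{prop:MF}.
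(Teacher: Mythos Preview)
Your proposal is correct and follows essentially the same approach as the paper. The paper's own proof is a single sentence noting that the identity follows from Proposition~\ref{prop:MF} with ``the extra factor of $x$ com[ing] from the fact that Proposition~\ref{prop:MF} is counting partial states with an initial right step''; you have simply made explicit the prepend-$r$ bijection and the handling of the empty word that this sentence leaves implicit.
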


\begin{proof}
This follows from Proposition \ref{prop:MF}. The extra factor of $x$ comes from the fact that Proposition \ref{prop:MF} is counting partial states with an initial right step.
\end{proof}

\subsubsection{\textsf{Enumeration of irreducible partial states}}

Let us compute the generating function $M(x,y)$ for irreducible partial states.

\begin{table}[h]
\begin{center}
  \begin{tabular}{|l|c|c|}
    \hline
    Type  & Illustration & Generating function \\ \hline
$M_1= (rr^*)\square (r^*)\square$
    &
    \begin{minipage}{4cm}
        \centering
        \smallskip
    \includegraphics[height = 1cm]{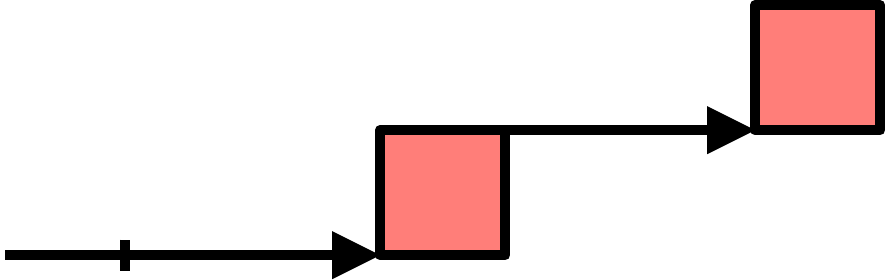}
    \end{minipage}
    & \begin{Large}$\frac{x^5y^2}{(1-x)^2}$ \end{Large} \\ \hline
$M_2= (rr^*)\square (rr^*)\square'$
&
        \begin{minipage}{4cm}
        \centering
        \smallskip
	\includegraphics[height = 1cm]{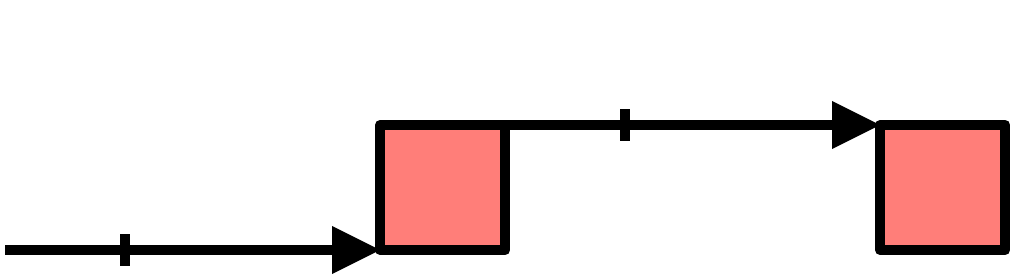}
    	\end{minipage}
& \begin{Large}$\frac{x^6y^2}{(1-x)^2}$\end{Large} \\ \hline
$M_3 = (rr^*)\square(r^*)v$
&
        \begin{minipage}{4cm}
        \centering
        \smallskip
	\includegraphics[height = 1cm]{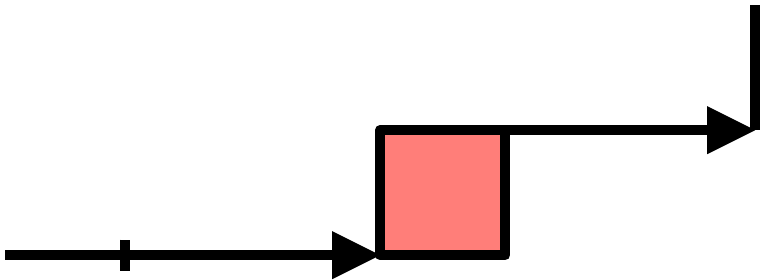}
    	\end{minipage}
& \begin{Large}$\frac{x^4y}{(1-x)^2}$\end{Large} \\ \hline
$M_4 =  (rr^*)\square(rr^*)v'$
&
        \begin{minipage}{4cm}
        \centering
        \smallskip
	\includegraphics[height = 1cm]{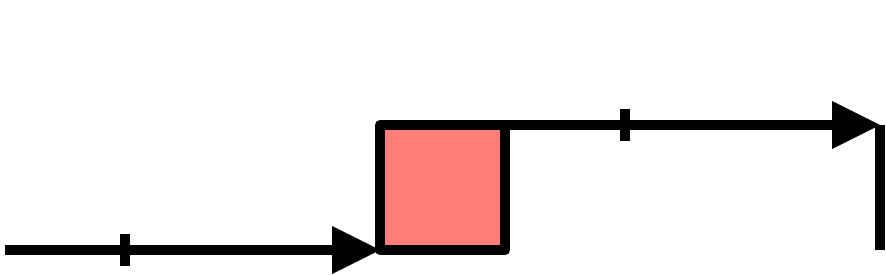}
    	\end{minipage}
& \begin{Large}$\frac{x^5y}{(1-x)^2}$\end{Large} \\ \hline
$M_5 = (rr^*)v(r^*)\square$
&
        \begin{minipage}{4cm}
        \centering
        \smallskip
	\includegraphics[height = 1cm]{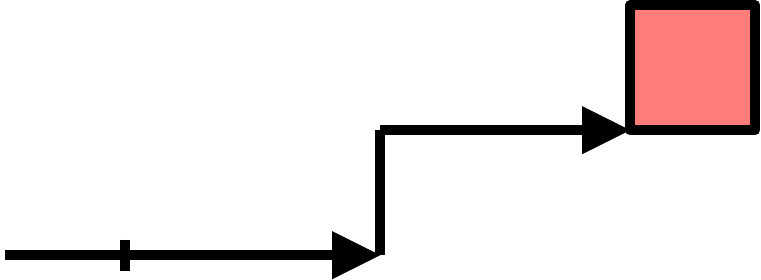}
    	\end{minipage}
& \begin{Large}$\frac{x^4y}{(1-x)^2}$\end{Large} \\ \hline
$M_6 = (rr^*)v(rr^*)\square'$
&
        \begin{minipage}{4cm}
        \centering
        \smallskip
	\includegraphics[height = 1cm]{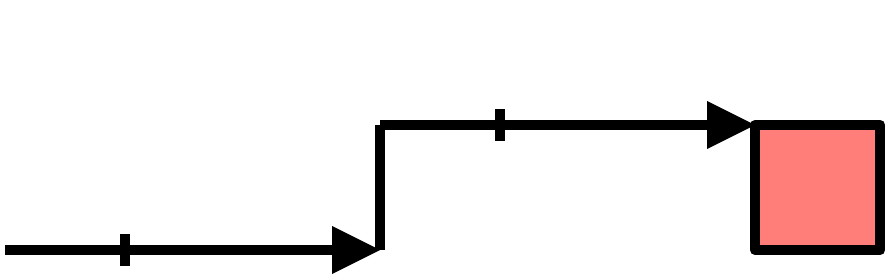}
    	\end{minipage}
& \begin{Large}$\frac{x^5y}{(1-x)^2}$\end{Large} \\ \hline
$M_7 = (rr^*)v(r^*)v$
&
        \begin{minipage}{4cm}
        \centering
        \smallskip
	\includegraphics[height = 1cm]{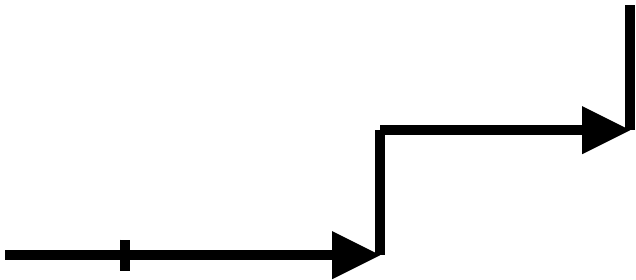}
    	\end{minipage}
& \begin{Large}$\frac{x^3}{(1-x)^2}$\end{Large}\\ \hline
$M_8 = (rr^*)v(rr^*)v'$
&
        \begin{minipage}{4cm}
        \centering
        \smallskip
	\includegraphics[height = 1cm]{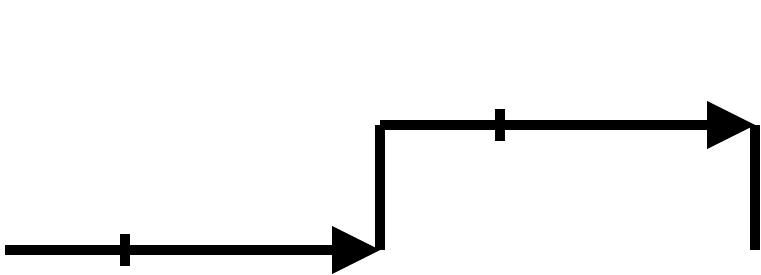}
    	\end{minipage}
& \begin{Large}$\frac{x^4}{(1-x)^2} $\end{Large} \\ \hline
  \end{tabular}
  \caption{
  The eight types of irreducible partial states and their generating functions.}
  \label{tab:movfuncgen}
\end{center}
\end{table}

\begin{proposition}\label{prop:M}
The generating function for the irreducible partial states $M$ is
\begin{eqnarray*}
  M(x,y) & = & \frac{x^3 + x^4 + 2x^4y + 2x^5y + x^5y^2 + x^6y^2}{(1-x)^2}.\\
\end{eqnarray*}
\end{proposition}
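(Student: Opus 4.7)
The plan is to partition the irreducible partial states into the eight shape-types $M_1,\ldots,M_8$ listed in Table~\ref{tab:movfuncgen} and then sum their generating functions. By definition, an irreducible partial state in a tunnel of width~$2$ starts with a horizontal step on the bottom border, so it must begin with at least one $r$, contributing a factor $rr^* \mapsto x/(1-x)$. A single non-horizontal symbol ($\square$ of weight $x^2y$ or $v$ of weight $x$) carries the arm from the bottom border into the interior row, and a second such symbol is required to bring it back to a border (the same one or the opposite one). Irreducibility forbids an earlier return to a border, so the middle portion of the word may contain only horizontal steps. Consequently every irreducible partial state has the shape
\[
(rr^*)\;\sigma_1\;(\text{middle})\;\sigma_2, \qquad \sigma_1,\sigma_2 \in \{\square, v\},
\]
with the middle segment being a run of $r$'s.

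Next I would make the threefold case split: $\sigma_1 \in \{\square,v\}$, $\sigma_2 \in \{\square,v\}$, and whether $\sigma_2$ lands on the bottom or the top border. This gives $2^3=8$ classes, matching the eight rows of Table~\ref{tab:movfuncgen}. A direct check of how two $\square$- or $v$-pieces can be joined inside the tunnel of width~$2$ shows that in each class the middle $r$-run is either $r^*$ (possibly empty) or $rr^*$ (nonempty), and the primed symbols $\square'$ and $v'$ in the table are exactly those for which geometric compatibility of $\sigma_1$ and $\sigma_2$ forces at least one intervening horizontal step. Verifying this classification is the heart of the argument: one must check that the eight types are mutually exclusive, jointly exhaustive, and that the middle-run constraint has been recorded correctly for each final border choice.

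Once the classification is established, the generating-function computation is immediate: each class is a concatenation of atomic factors with weights $r,v\mapsto x$, $\square\mapsto x^2y$, $rr^*\mapsto x/(1-x)$, $r^*\mapsto 1/(1-x)$, so each class contributes a single monomial divided by the common denominator $(1-x)^2$. Summing the eight contributions from Table~\ref{tab:movfuncgen} gives
\[
M(x,y) \;=\; \frac{x^5y^2 + x^6y^2 + x^4y + x^5y + x^4y + x^5y + x^3 + x^4}{(1-x)^2} \;=\; \frac{x^3 + x^4 + 2x^4y + 2x^5y + x^5y^2 + x^6y^2}{(1-x)^2},
\]
which is the claimed formula. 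The main obstacle is the geometric case analysis in the previous paragraph; once that is in place, the rest is a routine weight tally.
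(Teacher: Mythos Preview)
Your proposal is correct and follows essentially the same approach as the paper: both partition the irreducible partial states into the eight types of Table~\ref{tab:movfuncgen} according to the choice of $\sigma_1,\sigma_2\in\{\square,v\}$ and whether the second vertical step reverses direction (your ``bottom or top border'' dichotomy), then read off each generating function from the word pattern and sum. The paper likewise verifies one representative case ($M_2$, where the opposite-direction $\square'$ forces the nonempty middle run $rr^*$) and declares the remaining seven analogous, so your level of detail matches theirs.
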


\begin{proof}
The word of an irreducible partial state has exactly two symbols that contribute a vertical move, which can be either a $v$ or a $\square$. Thus there are eight different families $M_1,\dots, M_8$, corresponding to the irreducible partial states of the following forms: 
\[
\begin{array}{cccc}
\dots \square \dots \square & \dots \square \dots \square' & \dots \square \dots v & \dots \square \dots v' \\
\dots v \dots \square & \dots v \dots \square' & \dots v \dots v & \dots v \dots v'  
\end{array}
\]
where $\square'$ and $v'$ represent a move whose vertical step is in the opposite direction to the previous vertical step. 
Table~\ref{tab:movfuncgen} illustrates these 8 families together with their corresponding generating functions.

Consider for example the family $M_2$ consisting of partial states of the form $\dots \square \dots \square' $. We must have at least one horizontal step before the first $\square$, and at least one horizontal step between the two $\square$s, to make sure they do not intersect. Therefore the partial states in $M_2$ are the words in the language $(rr^*)\square(rr^*)\square'$, whose generating function is
\[
\left(x \cdot \frac{1}{1-x}\right) x^2y \left(x \cdot \frac{1}{1-x}\right) x^2y = \frac{x^6y^2}{(1-x)^2}.
\]
The other formulas in Table~\ref{tab:movfuncgen} follow similarly. 
Finally,  
$M(x,y)$ is obtained by adding the eight generating functions in the table.
\end{proof}

\subsubsection{\textsf{Enumeration of irreducible final partial states}}
Now let us compute the generating function $F(x,y)$ for irreducible final 
partial states.

\begin{table}[h]
 \begin{center}
  \begin{tabular}{|l|c|c|} \hline
    Irreducible move & Illustration & Generating function \\ \hline
 $F_1 = r^*$		&
    \begin{minipage}{4cm}
    \centering
    \includegraphics[height = 1cm]{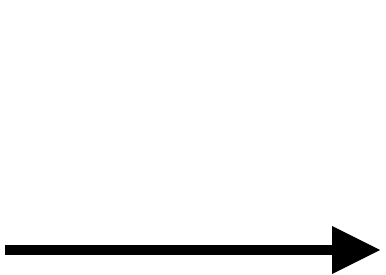}
    \end{minipage}
 &	\begin{Large}$\frac{1}{1-x}$\end{Large}\\ \hline

 $F_2= (rr^*)\llcorner$		&
    \begin{minipage}{4cm}
    \centering
    \includegraphics[height = 1cm]{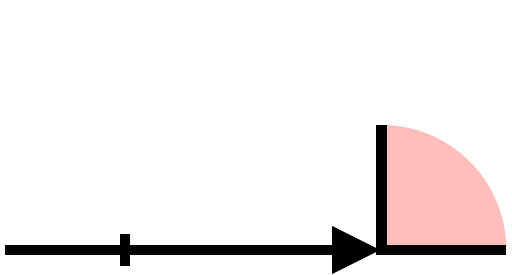}
    \end{minipage}
&	\begin{Large}$\frac{x^2y}{1-x}$\end{Large}\\ \hline

 $F_3=(rr^*)\square(r^*)$		&
    \begin{minipage}{4cm}
    \centering
    \includegraphics[height = 1cm]{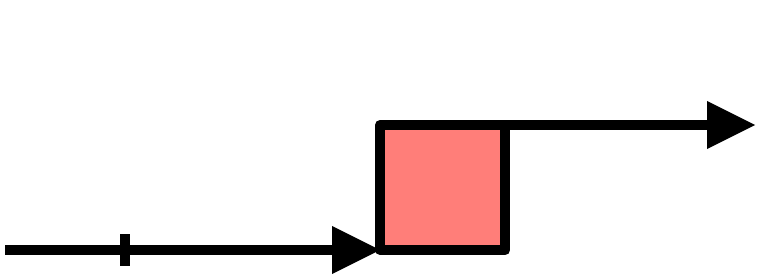}
    \end{minipage}
&	\begin{Large}$\frac{x^3y}{(1-x)^2}$\end{Large}\\ \hline
 
 $F_4= (rr^*)\square(r^*)\llcorner$		&
    \begin{minipage}{4cm}
    \centering
    \includegraphics[height = 1cm]{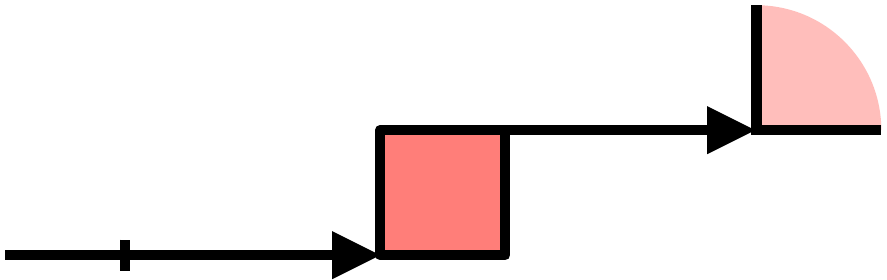}
    \end{minipage}
&	\begin{Large}$\frac{x^4y^2}{(1-x)^2}$ \end{Large}\\ \hline

 $F_5= (rr^*)\square(rr^*){\llcorner}'$		&
    \begin{minipage}{4cm}
    \centering
    \includegraphics[height = 1cm]{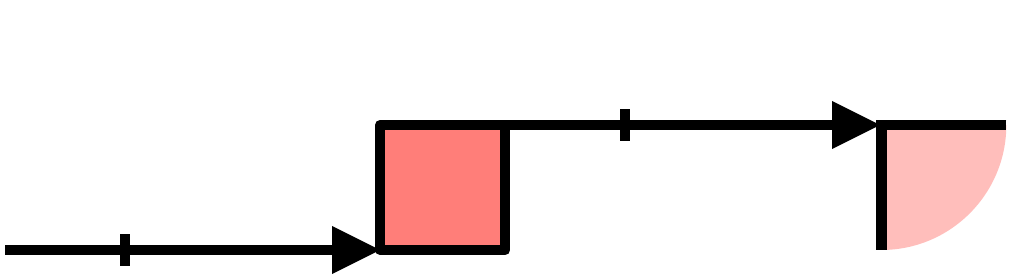}
    \end{minipage}
&	\begin{Large}$\frac{x^5y^2}{(1-x)^2}$\end{Large}\\ \hline

 $F_6= (rr^*)v(r^*)$		&
    \begin{minipage}{4cm}
    \centering
    \includegraphics[height = 1cm]{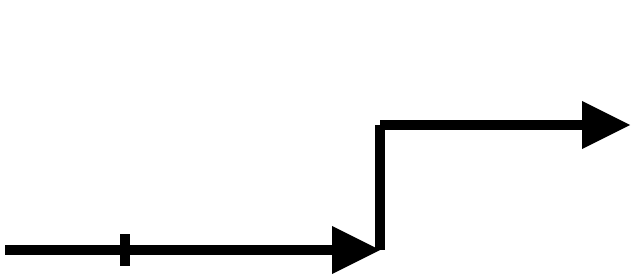}
    \end{minipage}
&	\begin{Large}$\frac{x^2}{(1-x)^2}$\end{Large}\\ \hline

 $F_7= (rr^*)v(r^*){\llcorner}$		&
    \begin{minipage}{4cm}
    \centering
    \includegraphics[height = 1cm]{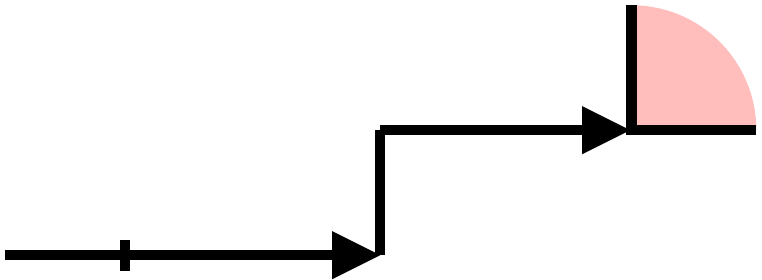}
    \end{minipage}
&	\begin{Large}$\frac{x^3y}{(1-x)^2}$\end{Large}\\ \hline

 $F_8= (rr^*)v(rr^*){\llcorner}'$		&
    \begin{minipage}{4cm}
    \centering
    \includegraphics[height = 1cm]{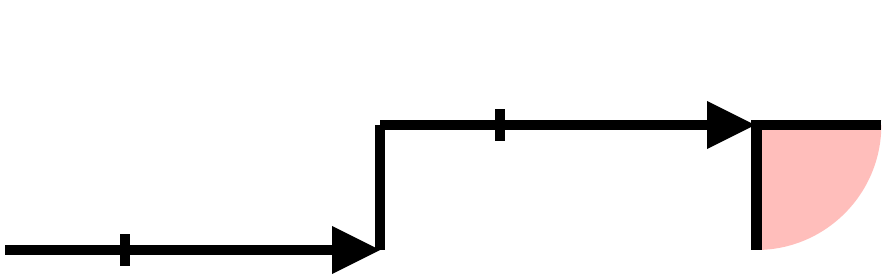}
    \end{minipage}
&	\begin{Large}$\frac{x^4y}{(1-x)^2}$\end{Large}\\ \hline
%&	\begin{Large}$\frac{x^2y}{1-x}$\end{Large}\\ \hline

%
% $F_9= (r^2r^*)v\ell$		&
%    \begin{minipage}{5.5cm}
%    \centering
%    \includegraphics[height = 1.4cm]{}
%    \end{minipage}
%&	\begin{Large}$\frac{x^4}{1-x}$\end{Large}\\ \hline
%
% $F_{10}= (r^2r^*)v\lrcorner$	&
%    \begin{minipage}{5.5cm}
%    \centering
%    \includegraphics[height = 1.4cm]{}
%    \end{minipage}
%&	\begin{Large}$\frac{x^4y}{1-x}$\end{Large}\\ \hline
\end{tabular}
\caption{The eight types of irreducible final partial states and their generating functions.}\label{tab:final}
 \end{center}
\end{table}

\begin{proposition}\label{prop:F}
The generating function for the final irreducible partial states is
\begin{eqnarray*}
F(x,y) & = & \frac{1-x+x^2+x^2y+x^3y+x^4y+x^4y^2+x^5y^2}{(1-x)^2}.
\end{eqnarray*}
\end{proposition}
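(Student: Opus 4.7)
The plan is to mirror the case analysis used for $M(x,y)$ in Proposition~\ref{prop:M}: classify every irreducible final partial state according to the pattern of its non-horizontal symbols, encode each class as a regular language in the alphabet $\{r,v,\square,\llcorner\}$, read off its generating function, and sum the contributions.

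First I would argue that the definition of an irreducible final partial state forces one of exactly eight structural types. Such a state is either empty or starts with a maximal run of horizontal links along the bottom border, after which it admits at most two symbols contributing vertical motion, since the second such symbol would constitute a ``first arrival at the same or opposite border,'' and by definition this arrival may only be realized by a claw (or else there is no arrival at all). Splitting according to (i) the absence or presence of an initial vertical symbol, (ii) whether this symbol is a $\square$ or a $v$, and (iii) the absence or presence of a terminating claw, together with its location on the same or opposite border, yields precisely the families $F_1,\ldots,F_8$ displayed in Table~\ref{tab:final}. The languages recorded there directly reflect the constraints: an initial $rr^*$ ensures a nonempty horizontal run before leaving the bottom border, an intermediate $r^*$ allows any run between a $\square$ (or $v$) and a same-side closing claw, while an $rr^*$ between a $\square$ (or $v$) and an opposite-side closing claw prevents the vertical symbol and the claw from overlapping.

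Next I would verify the generating functions listed in Table~\ref{tab:final} by the same token-counting used in Proposition~\ref{prop:M}. Each $r$ contributes $\frac{x}{1}$, each $v$ contributes $x$, each $\square$ contributes $x^2y$, each $\llcorner$ contributes $xy$, and each Kleene star $r^*$ contributes $\frac{1}{1-x}$. So, for instance,
\[
F_5 = (rr^*)\,\square\,(rr^*)\,\llcorner' \;\leadsto\; \frac{x}{1-x}\cdot x^2y \cdot \frac{x}{1-x}\cdot xy = \frac{x^5y^2}{(1-x)^2},
\]
and the other seven generating functions follow by identical bookkeeping.

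Finally I would sum the eight entries over a common denominator $(1-x)^2$. Writing
\[
F(x,y)=\frac{(1-x)+(1-x)x^2y+x^3y+x^4y^2+x^5y^2+x^2+x^3y+x^4y}{(1-x)^2}
\]
and simplifying the numerator (the two $\pm x^3y$ terms cancel) yields the claimed expression. The only mildly delicate point of the whole argument is the case analysis showing that the eight families really do exhaust the irreducible final partial states without overlap; once that bookkeeping is set up correctly, the rest is a routine generating function computation.
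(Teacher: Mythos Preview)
Your proposal is correct and follows essentially the same approach as the paper: both classify the irreducible final partial states into the eight families $F_1,\ldots,F_8$ of Table~\ref{tab:final} according to the pattern of non-horizontal symbols, compute each generating function by the token-counting method, and sum the results. Your justification of the eight-way split is slightly more detailed than the paper's, but the argument is identical in substance.
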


\begin{proof}
The word of each irreducible final partial state has at most one symbol among $\{v, \square\}$, and can possibly end with  $\llcorner$. Again, we let $\llcorner'$ represent a move whose vertical step is in the opposite direction to the previous vertical step. 
Thus there are eight different families $F_1,\dots, F_8$, corresponding to the irreducible partial states of the following forms: 
\[
\begin{array}{ccccc}
&\dots & & \dots \llcorner & \\
\dots v \dots && \dots v \dots \llcorner && \dots v \dots \llcorner'  \\
\dots \square \dots && \dots \square \dots \llcorner && \dots \square \dots \llcorner'  
\end{array}
\]
%\[
%\begin{array}{ccc}
%\dots & \dots \llcorner & \\
%\dots v \dots & \dots v \dots \llcorner & \dots v \dots \llcorner'  \\
%\dots \square \dots & \dots \square \dots \llcorner & \dots \square \dots \llcorner'  
%\end{array}
%\]
Table~\ref{tab:final} shows the eight different families of possibilities together with their corresponding generating functions.  
To obtain $F(x,y)$ we add their eight generating functions.
\end{proof}

\subsubsection{\textsf{The $f$-vector and Euler characteristic of the configuration space $\S_{2,n}$}}

\begin{reptheorem}{th:faceenum}
Let $\S_{2,n}$ be the configuration space for the robot of length $n$ moving in a rectangular tunnel of width $2$. If $c_{n,d}$ denotes the number of $d$-dimensional cubes in $\S_{2,n}$, then
\[
 C(x,y) = \sum_{n, d \geq 0} c_{n,d} \, x^ny^d  = 
\frac{1 + xy + x^2 + x^2y +  x^3 + 3x^3y +  x^3y^2 + 2x^4y + 2x^4y^2+x^5y^2}
 {1 - 2 x + x^2 - x^3 - x^4 - 2 x^4 y - 2 x^5 y - x^5 y^2 - x^6 y^2}.
% 
% 
% \frac{1+x^2+2x^3-2x^4+xy+x^2y+4x^3y+x^3y^2+2x^4y^2+x^5y^2}
%{1-2x+x^2-x^3-x^4-2x^4y-2x^5y-x^5y^2-x^6y^2}.
 % \frac{1-x+x^2+x^4-x^5+x^2y+x^3y+2x^4y-x^5y+x^4y^2+x^5y^2}{1-2x+x^2-x^3-x^4-2x^4y-2x^5y-x^5y^2-x^6y^2}.
\]
\end{reptheorem}

The above series starts: 
\begin{small}
\begin{eqnarray*}
 C(x,y) &=&   1 + x (y + 2) + x^2 (3y + 4) + x^3 (y^2 + 8y + 8)+ x^4 (4y^2 + 18y + 15) \\
 & &  + x^5 (11y^2 + 38y + 28) + x^6(y^3+30y^2+81y+53) + \ldots
\end{eqnarray*}
\end{small}
in agreement with Table \ref{tab:nestarm}.

\begin{proof}
This follows from Corollary \ref{cor:MF} and Propositions \ref{prop:M} and \ref{prop:F}.
\end{proof}

\begin{corollary}
The generating function counting the number $c_n$ of states of $\S_{2,n}$ is
\[
 \sum_{n \geq 0} c_n x^n =
 \frac{1 + x^2 + x^3}{1-2x+x^2-x^3-x^4}
  = 1+2x+4x^2+8x^3+15x^4 + 28x^5+53x^6+\cdots
\]
\end{corollary}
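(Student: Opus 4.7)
The plan is to observe that $c_n$ counts the $0$-dimensional cubes (vertices) of $\S_{2,n}$, which is precisely $c_{n,0}$ in the notation of Theorem~\ref{th:faceenum}. Therefore
\[
\sum_{n \geq 0} c_n x^n = \sum_{n \geq 0} c_{n,0}\, x^n = C(x,0),
\]
where $C(x,y)$ is the bivariate generating function computed in Theorem~\ref{th:faceenum}.

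So the only real step is to substitute $y=0$ into the closed form
\[
C(x,y) = \frac{1 + xy + x^2 + x^2y +  x^3 + 3x^3y +  x^3y^2 + 2x^4y + 2x^4y^2+x^5y^2}{1 - 2x + x^2 - x^3 - x^4 - 2x^4y - 2x^5y - x^5y^2 - x^6y^2}.
\]
Every term containing a positive power of $y$ vanishes in both numerator and denominator, so $C(x,0) = (1+x^2+x^3)/(1-2x+x^2-x^3-x^4)$, which is the claimed expression.

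The expansion $1+2x+4x^2+8x^3+15x^4+28x^5+53x^6+\cdots$ can then be verified by dividing out the series, or equivalently by reading off the constant-in-$y$ coefficients of the series expansion of $C(x,y)$ already given in the paper immediately after Theorem~\ref{th:faceenum}, matching the entries of the $f_0$ column in Table~\ref{tab:nestarm}. There is no real obstacle here: the hard work has been done in Corollary~\ref{cor:MF} together with Propositions~\ref{prop:M} and~\ref{prop:F}; the corollary is simply the specialization of that result to dimension zero.
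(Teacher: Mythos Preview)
Your proof is correct and takes essentially the same approach as the paper: both simply specialize the bivariate generating function $C(x,y)$ from Theorem~\ref{th:faceenum} at $y=0$ to extract the vertex count. Your additional remarks about cross-checking with the $f_0$ column of Table~\ref{tab:nestarm} are accurate but not needed for the argument.
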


\begin{proof}
This is a straightforward consequence of Theorem~\ref{th:faceenum}, substituting $y=0$.
\end{proof}

\begin{reptheorem}{th:Euler}
The Euler characteristic of the configuration space $\S_{2,n}$ equals $1$.
\end{reptheorem}

\begin{proof}
The generating function for the Euler characteristic of $\S_{2,n}$ is 
\begin{eqnarray*}
\sum_{n \geq 0} \chi(\S_{2,n}) x^n &=&  \sum_{n \geq 0}\left(\sum_{d \geq 0} (-1)^d c_{n,d}\right) x^n = C(x,-1) \\
& = & \frac{1-x-x^3+x^5}{1-2x+x^2-x^3+x^4+x^5-x^6} \\
 %+x^2+x^4-x^5-x^2-x^3-2x^4+x^5+x^4+x^5}{1-2x+x^2-x^3-x^4+2x^4+2x^5-x^5-x^6}\\
%			& = & \frac{1-x-x^3+x^5}{1-2x+x^2-x^3+x^4-x^5-x^6}\\
			& = & \frac{1}{1-x}  =  1+x+x^2+x^3+\ldots
% \sum c_{n,d} x^n(-1)^d & = & \frac{1-x+x^2+x^4-x^5-x^2-x^3-2x^4+x^5+x^4+x^5}{1-2x+x^2-x^3-x^4+2x^4+2x^5-x^5-x^6}\\
%			& = & \frac{1-x-x^3+x^5}{1-2x+x^2-x^3+x^4-x^5-x^6}\\
%			& = & \frac{1}{1-x}\\
%			& = & 1+x+x^2+x^3+\ldots
\end{eqnarray*}
by Theorem \ref{th:faceenum}. All the coefficients of this series are equal to 1, as desired.
\end{proof}

\section{\textsf{The combinatorics of {CAT}(0) cube complexes}}
\label{sec:CAT(0)}

\subsection{\textsf{CAT(0) spaces}}

We now define CAT(0) spaces. Consider a metric space $X$ where every two points $x$ and $y$ can be joined by a path of length $d(x,y)$, known as a \emph{geodesic}.
Let $T$ be a triangle in $X$ whose sides are geodesics of lengths $a,b,c$, and let $T'$ be the triangle with the same lengths in the Euclidean plane. For any geodesic chord of length $d$ connecting two points on the boundary of $T$, there is a comparison chord between the corresponding two points on the boundary of $T'$, say of length $d'$. If $d \leq d'$ for any such chord in $T$, we say that $T$ is a \emph{thin triangle} in $X$. We say the metric space $X$ has \emph{non-positive global curvature} if every triangle in $X$ is thin.

\begin{figure}[h]
\centering
\includegraphics[scale=1]{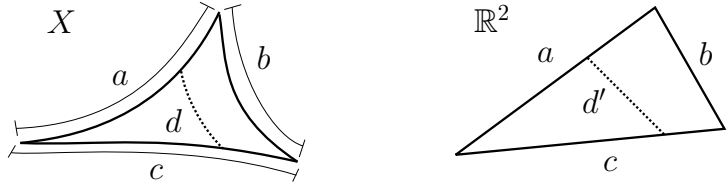}
\caption{A chord in a triangle in $X$, and the corresponding chord in the comparison triangle in $\mathbb{R}^2$. The triangle in $X$ is \emph{thin} if $d \leq d'$ for all such chords. \label{fig:thintriangle}}
\end{figure}

\begin{definition}\label{def:CAT(0)}
A metric space $X$ is said to be  $\mathrm{CAT}(0)$  if:

\noindent $\bullet$
 there is a unique geodesic (shortest) path between any two points in $X$, and

\noindent $\bullet$
 $X$ has non-positive global curvature. 

 \end{definition}

We are interested in proving that the cubical complex $\S_{m,n}$ is CAT(0), because of the following theorem:

\begin{theorem} \label{th:alg} \cite{AG, ABY, AOS, Re} 
If the configuration space of a robot is a $\mathrm{CAT}(0)$ cubical complex, there is an  algorithm to find the optimal way of moving the robot from one position to another.
\end{theorem}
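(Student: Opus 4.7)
The plan is to leverage the combinatorial machinery developed in \cite{AOS, ABY} that converts navigation in a CAT(0) cube complex into manipulation of downward-closed subsets of an associated PIP, rather than attempt any direct geometric argument in $X$. Since the statement is quoted from prior work, the proposal is really a blueprint for assembling the pieces.

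First, I would invoke the bijection of Ardila--Owen--Sullivant between rooted CAT(0) cube complexes $X$ and posets with inconsistent pairs $P = P(X)$: the vertices of $X$ are in bijection with the \emph{consistent order ideals} of $P$ (downward-closed subsets containing no inconsistent pair), and two vertices differ by an edge if and only if their associated ideals differ by one element. Under this dictionary, the hyperplanes of $X$ correspond to the elements of $P$, and the hyperplane indexed by $p \in P$ separates two vertices $u,v$ precisely when $p \in I_u \triangle I_v$. Consequently any edge-path from $u$ to $v$ must cross at least $|I_u \triangle I_v|$ hyperplanes, giving a lower bound $d_{l_1}(u,v) \geq |I_u \triangle I_v|$.

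Next I would describe the algorithm and show it attains this bound. Starting from the ideal $I_u$, we repeatedly either (a) delete a maximal element of $I_u \setminus I_v$ from the current ideal, or (b) add a minimal element of $I_v \setminus I_u$ whose predecessors already lie in the current ideal and which creates no inconsistency. Each such operation preserves consistency and downward-closure, and therefore corresponds to a single edge of $X$, i.e.\ a legal move of the robot. A routine check shows that at every stage at least one of (a) or (b) is available, so after exactly $|I_u \triangle I_v|$ steps we reach $I_v$; combined with the lower bound of the previous paragraph, this path is an $l_1$-geodesic. Finally, a theorem of Niblo--Reeves (invoked via \cite{ABY, Re}) guarantees that a minimum-length edge-path in a CAT(0) cube complex can be concatenated with diagonal moves through commuting cubes to produce the actual CAT(0) geodesic, so ``optimal'' holds in both the combinatorial and the metric senses.

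The main obstacle in applying this blueprint to a concrete robot is not the algorithm's correctness, which follows formally from the PIP/cube-complex dictionary, but rather the explicit translation between physical configurations of the robot and consistent order ideals of $P(X)$. In the present setting this translation is supplied by the coral PIP of Section~\ref{sec:ourPIP} together with Proposition~\ref{conj:PIP}, which identifies $P(\S_{m,n}) = C_{m,n}$; modulo that identification, the algorithm above becomes a procedure on coral tableaux, and the running time is polynomial in $|I_u \triangle I_v|$ and $|C_{m,n}|$.
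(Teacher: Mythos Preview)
The paper does not prove this theorem; it is stated as a citation of \cite{AG, ABY, AOS, Re}, and Section~\ref{sec:algorithm} merely points the reader to \cite{ABY} and the normal cube paths of Niblo--Reeves \cite{NR} for the details. So there is no ``paper's own proof'' to compare against; your proposal is effectively a summary of the argument in the cited references, and as such it is broadly accurate and in the same spirit.

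One small imprecision worth flagging: your final paragraph conflates two different notions of ``optimal.'' Normal cube paths (Niblo--Reeves) give geodesics in the \emph{cube metric} (and hence the edge and time metrics, as noted in Section~\ref{sec:algorithm}); they are not obtained by ``concatenating diagonal moves'' onto an edge-geodesic, nor do they produce the Euclidean $\mathrm{CAT}(0)$ geodesic. The latter requires the separate, more involved algorithm of \cite{AOS}, which the paper explicitly sets aside as less relevant to the robotics interpretation. Your edge-metric argument via $|I_u \triangle I_v|$ is correct, but the bridge from there to ``the actual CAT(0) geodesic'' is not as you describe.
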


%
%As explained in \cite{ABY} and in \cite{AOS}, Theorem \ref{th:alg} applies to four natural definitions of  ``optimal":
%
%\noindent $\bullet$ smallest number of moves, if only one move at a time is allowed,
%
%\noindent $\bullet$ smallest number of steps (in each step we can perform several physically independent moves),
%
%\noindent $\bullet$ shortest time elapsed.
%
%\noindent $\bullet$ closest in the Euclidean metric.
%

\subsection{\textsf{CAT(0) cubical complexes}} 

It is not clear a priori how one might show that a cubical complex is CAT(0); Definition \ref{def:CAT(0)} certainly does not provide an efficient way of testing this property. 
Fortunately, \emph{for cubical complexes}, we know of two possible approaches.

\medskip

\noindent \textbf{\textsf{The topological approach.}}
The first approach uses Gromov's groundbreaking result that this subtle metric property has a topological--combinatorial characterization:

\begin{theorem}\label{th:Gromov}\cite{Gr}
A cubical complex is $\mathrm{CAT}(0)$ if and only if it is simply connected, and the link of every vertex is a flag simplicial complex.
\end{theorem}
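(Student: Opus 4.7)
The plan is to split the equivalence into its two directions and use the Cartan--Hadamard theorem to reduce the global CAT(0) condition to a local one that can be checked vertex by vertex. For the forward direction, I would first show that a CAT(0) space is contractible: the unique geodesics from a fixed basepoint give a canonical straight-line homotopy that contracts the space, which forces simple connectedness. For the flag condition on links, I would argue by contradiction: if three vertices of the link of some cube vertex $v$ are pairwise joined by edges but do not span a $2$-simplex, then three $2$-cubes meet along edges at $v$ without closing up into a $3$-cube, and one can exhibit a small geodesic triangle looping around this ``hollow corner'' whose sides are longer than those of the Euclidean comparison triangle in $\mathbb{R}^2$, violating the thin-triangle inequality.

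For the reverse direction, the essential tool is the Cartan--Hadamard theorem: a complete, simply connected geodesic metric space that is \emph{locally} CAT(0) is globally CAT(0). Since our cubical complex is assumed simply connected, this reduces the problem to verifying the local CAT(0) condition at each point. Away from vertices a cubical complex is locally isometric to an open subset of Euclidean space, so the only genuine issue is at the vertices. At a vertex $v$, I would appeal to the standard local-to-link principle: a neighborhood of $v$ is CAT(0) if and only if the link $\mathrm{Lk}(v)$, equipped with its natural all-right piecewise-spherical metric (every edge has length $\pi/2$), is CAT(1).

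This reduces the theorem to a purely local statement: an all-right spherical simplicial complex is CAT(1) if and only if its underlying simplicial complex is flag. One direction is easy and explicit: three pairwise-adjacent vertices that do not span a $2$-simplex produce a geodesic loop of length $3\pi/2 < 2\pi$ in the link that cannot be filled by a short disc, violating CAT(1). The converse is the deep half; I would try to prove it by induction on dimension, controlling the length of closed geodesics via the all-right structure and ruling out short geodesic bigons using the flag hypothesis to extend small fillings one simplex at a time.

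The main obstacle is this last step, the combinatorial-to-metric implication that flagness forces CAT(1) on the link. The three other ingredients (contractibility of CAT(0) spaces, the Cartan--Hadamard reduction, and the local link characterization of CAT(0)) are each standard and modular, and the ``only if'' side of the link equivalence needs only a single bad configuration. Essentially all the nontrivial geometry of the theorem is concentrated in showing that, in an all-right spherical complex, the absence of ``empty simplices'' in the combinatorial sense is strong enough to guarantee the metric CAT(1) condition.
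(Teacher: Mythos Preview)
The paper does not prove this theorem at all; it simply quotes it as a known result and cites Gromov~\cite{Gr}. So there is no ``paper's own proof'' to compare against: Theorem~\ref{th:Gromov} is used as a black box, and in fact the paper immediately sets it aside in favor of the purely combinatorial PIP approach of Theorem~\ref{th:poset}.

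Your outline is the standard route to Gromov's theorem and is sound at the level of a sketch: Cartan--Hadamard reduces the global CAT(0) condition to a local one, the local condition at a vertex is equivalent to the all-right spherical link being CAT(1), and the combinatorial content is that an all-right spherical complex is CAT(1) if and only if it is flag. Two small inaccuracies are worth flagging. First, in the forward direction your phrasing ``a small geodesic triangle \ldots\ whose sides are longer than those of the Euclidean comparison triangle'' is off: the comparison triangle has the \emph{same} side lengths by definition; what fails is the chord inequality, or equivalently one exhibits a closed local geodesic of length $3\pi/2<2\pi$ in the link. Second, ``away from vertices a cubical complex is locally isometric to an open subset of Euclidean space'' is not literally true at interior points of lower-dimensional faces shared by several cubes; the correct statement is that the link at such a point is a spherical join of a round sphere with the link of a vertex in a lower-dimensional cubical complex, so checking vertex links suffices. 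With those corrections your plan matches the proof found in, e.g., Bridson--Haefliger, and you correctly identify the flag $\Rightarrow$ CAT(1) direction for all-right spherical complexes as the only genuinely hard step.
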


Recall that a simplicial complex $\Delta$  is \emph{flag} if it has no empty simplices; more explicitly, if the $1$-skeleton of a simplex is in $\Delta$, then that simplex must be in $\Delta$. It is easy to see that, in the configuration space of a robot, every vertex has a flag link. Therefore, one approach to prove that these spaces are CAT(0) is to prove they are simply connected; see \cite{AG, GP} for examples of this approach.

\medskip

\noindent \textbf{\textsf{The combinatorial approach.}} We will use a purely combinatorial characterization of finite CAT(0) cube complexes ~\cite{AOS, Ro, Sa, W}; we use the formulation of Ardila, Owen, and Sullivant~\cite{AOS}. After drawing enough $\mathrm{CAT}(0)$ cube complexes, one might notice that they look a lot like distributive lattices. In trying to make this statement precise, one discovers a generalization of Birkhoff's Fundamental Theorem of Distributive Lattices; there are bijections: 
\begin{eqnarray*}
\textrm{distributive lattices} & \longleftrightarrow  & \textrm{posets}\\
\textrm{rooted CAT(0) cubical complexes} &  \longleftrightarrow & \textit{posets with inconsistent pairs} \textrm{ (PIPs)}
\end{eqnarray*}

Hence to prove that a cubical complex is a $\mathrm{CAT}(0)$ cubical complex, it suffices to choose a root for it, and identify the corresponding PIP. By the above correspondence, this PIP will serve as a ``certificate" that the complex is CAT(0). For a reasonably small complex, it is straighforward to identify the PIP, as described below. For an infinite family of configuration spaces like the one that interests us, one may do this for a few small examples and hope to identify a pattern that one can prove in general. Along the way, one gains a better understanding of the combinatorial structure of the complexes one is studying. 
This approach was first carried out in \cite{ABY} for two examples, and the goal of this paper is to carry it out for the robots $R_{m,n}$. This family is considerably more complicated than the ones in \cite{ABY}, in particular, because it appears to be the first known example where the skeleton of the complex is not a distributive lattice, and the PIP does indeed have inconsistent pairs.

\begin{figure}[h]
\begin{center}
\includegraphics[width=1.7in]{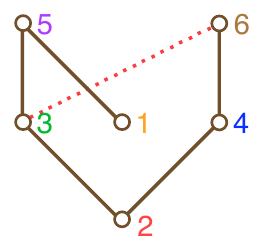} \qquad \qquad \qquad
\includegraphics[width=2.7in]{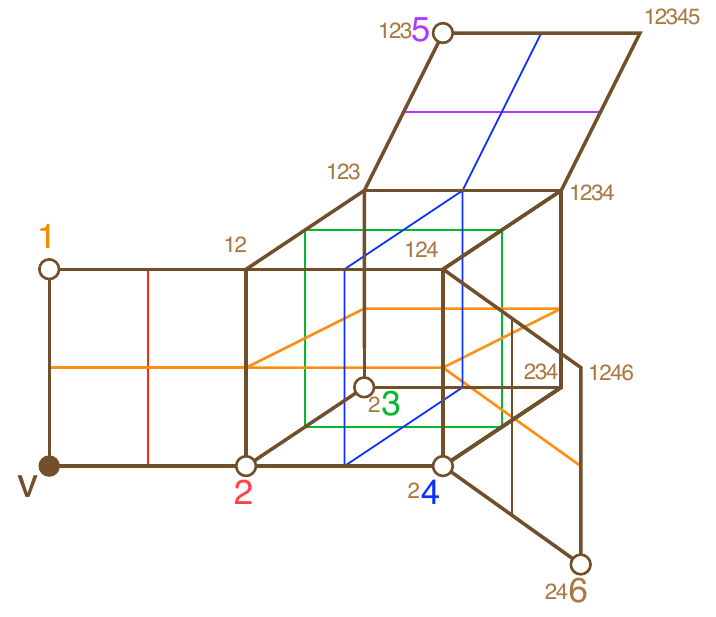}
\caption{A PIP and the corresponding rooted $\mathrm{CAT}(0)$ cubical complex.\label{fig:bijection} }
\end{center}
\end{figure}

\newpage

Let us describe this method in more detail.

\begin{definition} \cite{AOS, W}
A \emph{poset with inconsistent pairs (PIP)} is a poset $P$ together with a collection of \emph{inconsistent pairs}, which we denote $p \nleftrightarrow q$ (where $p \neq q$), such that

\begin{center}
if $p \nleftrightarrow q$ and $q < q'$ then $p \nleftrightarrow q'$.
\end{center}
\end{definition}

Note that PIPs are equivalent to \emph{event structures} \cite{W} in computer science, and are closely related to  \emph{pocsets} \cite{Ro, Sa} in geometric group theory.
The \emph{Hasse diagram} of a poset with inconsistent pairs (PIP) is obtained by drawing the poset, and connecting each $<$-minimal inconsistent pair with a dotted line. 
The left panel of Figure \ref{fig:bijection} shows an example.

\begin{theorem}\cite{AOS, Ro, Sa}\label{th:poset} 
There is a bijection $P \mapsto X(P)$ between posets with inconsistent pairs and rooted $\mathrm{CAT}(0)$ cube complexes.
\end{theorem}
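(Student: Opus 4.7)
The plan is to construct the bijection explicitly and verify compatibility in both directions. Given a PIP $P$, define $X(P)$ as follows: its vertices are the \emph{consistent order ideals} of $P$, that is, downward-closed subsets $I \subseteq P$ containing no inconsistent pair. Place an edge between two vertices whenever they differ by a single element. More generally, a $k$-cube consists of a consistent ideal $I$ together with an antichain $A = \{p_1, \ldots, p_k\} \subseteq P \setminus I$ such that $I \cup A$ is still a consistent ideal; the $2^k$ vertices of this cube are the ideals $I \cup B$ for $B \subseteq A$. Designating the empty ideal as the root yields a rooted cubical complex.

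For the inverse direction, given a rooted CAT(0) cube complex $(X,v_0)$, I would take as elements of $P(X)$ the \emph{hyperplanes} of $X$, where a hyperplane is an equivalence class of edges under the transitive closure of the relation ``opposite edges of a common square''. I would set $h \leq h'$ iff every combinatorial geodesic from $v_0$ to any edge in $h'$ must cross an edge of $h$, and declare $h \nleftrightarrow h'$ iff no combinatorial geodesic from $v_0$ simultaneously crosses both $h$ and $h'$. The axiom $h \nleftrightarrow h' \leq h''  \Rightarrow h \nleftrightarrow h''$ is then automatic from the definitions.

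To see that $X(P)$ is CAT(0), I would invoke Gromov's criterion (Theorem~\ref{th:Gromov}). The flag-link condition at a vertex $I$ translates to: if elements $p_1, \ldots, p_k \in P \setminus I$ are pairwise jointly addable to $I$ (each pair spans a square), then they are jointly addable (they span a $k$-cube). This follows formally because consistency is a pairwise condition and downward-closedness of $I \cup \{p_i, p_j\}$ for all $i, j$ implies downward-closedness of $I \cup \{p_1, \ldots, p_k\}$. Simple connectedness is shown by contracting any edge loop to the root: each edge in a loop corresponds to adding or removing a single element, and a loop can be systematically pushed down to the empty ideal using the commuting-square relations, as in a discrete Morse argument on the poset of ideals ordered by inclusion. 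Finally, for the two maps to be mutually inverse, one must identify hyperplanes of $X(P)$ with elements of $P$ (the hyperplane dual to an element $p$ separates ideals containing $p$ from those not containing $p$), and then check that the induced poset and inconsistency structures coincide with the original.

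The main obstacle will be simple connectedness of $X(P)$ together with the verification that the inverse map recovers the PIP structure. The flag condition is nearly formal; but simple connectedness requires a careful normal-form argument showing that any combinatorial loop can be filled by squares, which relies on the fact that distinct additions commute precisely when they are compatible, and that incompatibility arises either from a covering relation in the poset or from an inconsistent pair, both of which obstruct crossings symmetrically. Once these are in place, the identification of hyperplanes with elements of $P$ on one side and the normal-form description of geodesics in $X$ from $v_0$ on the other yield that $P \mapsto X(P)$ and $X \mapsto P(X)$ are mutually inverse.
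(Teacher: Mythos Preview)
Your construction of the bijection matches the paper's description exactly: the paper defines $X(P)$ via consistent order ideals with cubes $C(I,M)$ indexed by an ideal $I$ and a subset $M \subseteq I_{\max}$ (equivalent to your parameterization by $(I,A)$ with $I' = I \cup A$ and $A = M$), and defines $P(X)$ via hyperplanes with the same order and inconsistency relations you wrote down. Note, however, that the paper does not actually \emph{prove} Theorem~\ref{th:poset}; it is quoted as a known result from \cite{AOS, Ro, Sa} and only the two directions of the bijection are described, without any verification that $X(P)$ is CAT(0) or that the maps are mutually inverse. Your sketch therefore goes beyond what the paper does, and the outline you give (Gromov's criterion for the flag condition, a normal-form/filling argument for simple connectedness, and identification of hyperplanes with elements of $P$) is the standard route taken in the cited references.
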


It is useful to describe both directions of this bijection.

\bigskip

\noindent \textsf{Rooted CAT(0) cubical complex $\longmapsto$ PIP}: A CAT(0) cube complex $X$ has a system of \emph{hyperplanes} as described by Sageev \cite{Sa}. Each $d$-cube $C$ in $X$ has $d$ ``hyperplanes" of codimension 1, each one of which is a $(d-1)$-cube orthogonal to an edge direction of $C$ and splits it into two equal parts. When two cubes $C$ and $C'$ share an edge $e$, we identify the two hyperplanes in $C$ and in $C'$ that are orthogonal to $e$. The result of all these identifications is the set of hyperplanes of $X$. The right panel of Figure \ref{fig:bijection} shows a CAT(0) cube complex and its six hyperplanes.

Now we define the PIP $P$ associated to $X$. The elements of $P$ are the hyperplanes of $X$. For hyperplanes $H_1$ and $H_2$, we declare $H_1 < H_2$ if, starting at the root $v$ of $X$, one must cross hyperplane $H_1$ before one can cross hyperplane $H_2$. Finally, we declare $H_1 \nleftrightarrow H_2$ if, starting at the root $v$ of $X$, it is impossible to   cross both $H_1$ and $H_2$ without backtracking. The left panel of Figure \ref{fig:bijection} shows the PIP associated to the rooted complex of the right panel.

\bigskip

\noindent 
\textsf{PIP $\longmapsto$ rooted CAT(0) cubical complex}: Let $P$ be a PIP.
%We now describe the cubical complex $X(P)$ associated to a poset $P$. 
Recall that an \emph{order ideal} of $P$ is a subset $I$ such that if $x<y$ and $y \in I$ then $x \in I$. We say that $I$ is \emph{consistent} if it contains no inconsistent pair.

The vertices of $X(P)$ are identified with the consistent order ideals of $P$. 
There is a cube $C(I,M)$ for each pair $(I, M)$ of a consistent order ideal $I$ and a subset $M \subseteq I_{max}$, where $I_{max}$ is the set of maximal elements of $I$. This cube has dimension $|M|$, and its vertices are obtained by removing from $I$ the $2^{|M|}$ possible subsets of $M$. The cubes are naturally glued along their faces according to their labels. The root is the vertex corresponding to the empty order ideal.

This bijection is also illustrated in Figure \ref{fig:bijection}; the labels of the cubical complex on the right correspond to the consistent order ideals of the PIP on the left. 

% shows a PIP $P$ and the corresponding complex  $X(P)$, which is rooted at $v$. For example, the compatible order ideal $I=\{1,2,3,4\}$ and the subset $M = \{1,4\} \subseteq I_{max}$ gives rise to the square with vertices labelled $1234, 123, 234, 23$.

\section{\textsf{The coral PIP, coral tableaux, and the proof of Theorem \ref{th:CAT(0)}}}
\label{sec:ourPIP}

We have now described all the preliminaries necessary to prove our main result that the configuration space $\S_{m,n}$ of the robotic arm in a tunnel is a $\mathrm{CAT}(0)$ cubical complex. We will achieve this by identifying the PIP corresponding to it under the bijection of Theorem~\ref{th:poset}. Interestingly, this PIP has some resemblance with Young's lattice of partitions. However, instead of partitions, its elements correspond to certain paths which we call \emph{coral snakes}.

\subsection{\textsf{The coral PIP}}

\begin{definition} A \emph{coral snake} $\lambda$ of height at most $m$ is a path of unit squares, colored alternatingly black and red (starting with black), inside the tunnel of width~$m$ such that:
\begin{enumerate}[(i)]
\item The snake $\lambda$ starts at the bottom left of the tunnel, and takes steps up, down, and right.
\item Suppose $\lambda$ turns from a vertical segment $V_1$ to a horizontal segment $H$ to a vertical segment $V_2$ at corners $C_1$ and $C_2$. Then $V_1$ and $V_2$ face the same direction if and only if $C_1$ and $C_2$ have the same color. (Note: We consider the first column of the snake a vertical segment going up, even if it consists of a single cell.)
\end{enumerate}
The \emph{length} $l(\lambda)$ is the number of unit squares of $\lambda$, the \emph{height} $h(\lambda)$ is the number of  rows it touches, and the \emph{width} $w(\lambda)$ is the number of columns it touches. We say that~$\mu$ contains~$\lambda$, in which case we write $\lambda \preceq \mu$, if $\lambda$ is an initial sub-snake of $\mu$ obtained by restricting to the first $k$ cells of $\mu$ for some $k$. We write $\lambda \prec \mu$ if $\lambda \preceq \mu$ and $\lambda \neq \mu$.
\end{definition}

Figure \ref{fig:snake} shows a coral snake $\lambda$ of length $l(\lambda) = 11$, height $h(\lambda) = 3$, and width $w(\lambda) = 7$. We encourage the reader to check condition $(ii)$. We often omit the colors of the coral snake when we draw them, since they are uniquely determined.

\begin{figure}[htbp]
	\centering
		\includegraphics[height = 2.2cm]{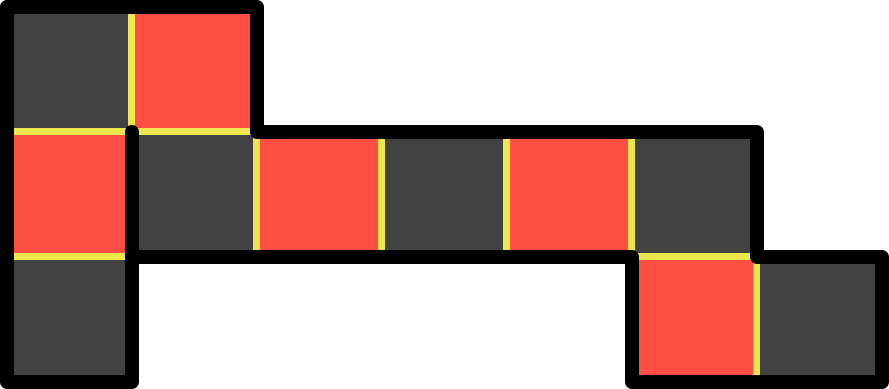} \qquad 
		\includegraphics[height = 2.5cm]{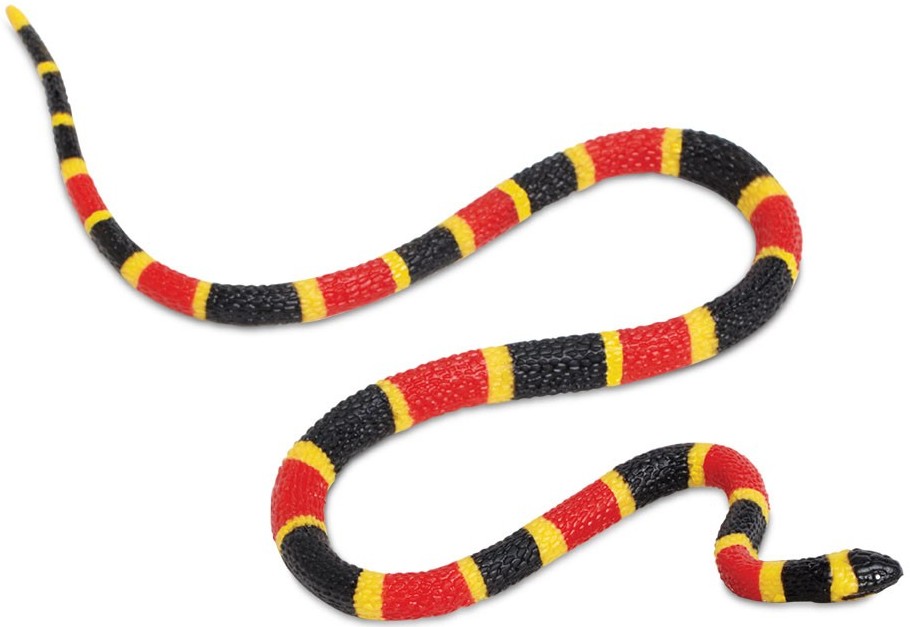}	\caption{A mathematical and (a photograph of) a  real-life coral snake.}
	\label{fig:snake}
\end{figure}

\begin{remark}
Our notion of containment of coral snakes differs from the notion of containment in the plane. For example, if $\lambda$ is the snake with two boxes corresponding to one step right, and $\mu$ is the snake with four boxes given by consecutive steps up-right-down, then $\lambda$ is contained in $\mu$ in the plane. However, $\lambda$ is not a sub-snake of $\mu$ and therefore $\lambda \npreceq \mu$. % in our notation.       
\end{remark}

\begin{definition}\label{def:snakePIP}
Define the \emph{coral PIP} $C_{m,n}$ as follows:

\noindent
$\bullet$ Elements: pairs $(\lambda,s)$ of a coral snake $\lambda$ with $h(\lambda) \leq m$ and a non-negative integer $s$ with $s \leq n-l(\lambda)-w(\lambda)+1$.

\noindent
$\bullet$ Order: 
$(\lambda,s) \leq (\mu,t)$ if $\lambda \preceq  \mu$ and $s \geq t$.

\noindent
$\bullet$ Inconsistency: $(\lambda,s)  \nleftrightarrow (\mu,t)$ if neither $\lambda$ nor $\mu$ contains the other.

For simplicity, we call the elements of the coral PIP \emph{numbered snakes}.
\end{definition}

\begin{remark}\label{rem:width2}
Figures \ref{fig:coralPIP} and \ref{fig:PIP9_withpaths} illustrate the coral PIPs $C_{m,2}$ for the tunnel of width 2. They have a nice self-similar structure\footnote{During a break, after weeks of unsuccessful attempts to describe these PIPs, the Pacific Ocean sent us a beautiful coral that looked just like them. The proverbial fractal structure of real-life corals helped us discover the self-similar nature of these PIPs; this led to their precise definition and inspired their name.}
in the following sense: they grow vertically supported on a main vertical ``spine", forming several sheets of roughly triangular shape. Along the way they grow other vertical spines, and every other spine supports a smaller coral PIP. The situation for higher $m$ is similar, though more complicated. 
\end{remark}

We will prove the following strengthening of Theorem \ref{th:CAT(0)}.

\begin{theorem}\label{thm:CAT}
The configuration space $\S_{m,n}$ of the robotic arm $R_{m,n}$ of length $n$ in a tunnel of width $m$ is a $\mathrm{CAT}(0)$ cubical complex. When it is rooted at the horizontal position of the arm, its corresponding PIP is the coral PIP $C_{m,n}$ of Definition~\ref{def:snakePIP}. 
\end{theorem}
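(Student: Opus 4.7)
The plan is to invoke the bijection of Theorem~\ref{th:poset}: to establish that $\S_{m,n}$ is a $\mathrm{CAT}(0)$ cubical complex with associated PIP $C_{m,n}$, it suffices to construct an isomorphism $\Phi \colon \S_{m,n} \to X(C_{m,n})$ of rooted cubical complexes, rooting $\S_{m,n}$ at the horizontal position $\Hor$. Since the vertices of $X(C_{m,n})$ are precisely the consistent order ideals of $C_{m,n}$, and its cubes are encoded by subsets of maximal elements of those ideals, the task reduces to defining a vertex map $\Phi$ on robot positions and verifying that each legal move (switch corners or flip the end) corresponds to adding or removing a single maximal element of the resulting order ideal.

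First I would define $\Phi$ explicitly by decomposing each robot position $p$ into its maximal non-horizontal \emph{excursions}: subpaths of the arm that leave the bottom row of the tunnel, wander inside, and eventually return to the bottom row (or reach the end of the arm). Each such excursion traces a shape that, after possibly reflecting across the appropriate horizontal axis, matches a coral snake $\lambda$ with $h(\lambda) \leq m$; the offset $s$ records the position of the excursion along the arm so that the bound $s \leq n - l(\lambda) - w(\lambda) + 1$ of Definition~\ref{def:snakePIP} is automatically satisfied. The image $\Phi(p)$ is then the order ideal of $C_{m,n}$ generated by these numbered snakes, consisting of all $(\lambda', s')$ with $\lambda' \preceq \lambda$ and $s' \geq s$ for some excursion $(\lambda, s)$ of $p$.

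Next I would verify three properties of $\Phi$: (a) $\Phi(p)$ is always consistent, so no two incomparable snakes occur together; (b) $\Phi$ is a bijection onto the set of consistent order ideals, with inverse given by reading off the maximal elements of an ideal and reassembling the corresponding excursions into a legal arm position; and (c) $\Phi$ sends each legal move to a covering relation in the lattice of consistent order ideals. A switch-corners move locally extends or shortens a single excursion by one cell, which should correspond precisely to adding or removing a maximal numbered snake $(\lambda, s)$; a flip-the-end move only affects the terminal excursion. Commuting pairs of moves, taking place in disjoint regions of the arm, then translate automatically to squares in both complexes, and more generally commuting $k$-tuples of moves translate to $k$-cubes, matching the cube structure of $X(C_{m,n})$.

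The hard part will be step (a), verifying consistency. One must analyze how two distinct excursions of a single position $p$ may interact, and show that the alternating-color condition (ii) in the definition of a coral snake is precisely what guarantees that any two numbered snakes simultaneously attached to $p$ are comparable in the coral PIP. The subtle cases arise when two excursions are close along the arm or touch the tunnel walls in intricate ways; rule (ii) should force the two associated snakes to be related by $\preceq$ whenever their offset pairs would otherwise conflict. Once consistency is established, steps (b) and (c) become largely bookkeeping, and the enumerative identity of Theorem~\ref{th:Euler} — that both sides have Euler characteristic $1$ in width $m=2$ — serves as a useful numerical sanity check that the proposed bijection has the right cardinality.
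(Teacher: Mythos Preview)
Your proposal has a genuine gap: the ``excursion decomposition'' is the wrong combinatorial encoding, and step (a) is not merely hard but in fact false as stated.

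Consider a position with two separate excursions, say the arm $u\,r\,d\,r\,u\,u\,r\,d\,d$ in width $m=2$. Your first excursion $u\,r\,d$ has two vertical steps and would yield a two-box snake $\lambda_1$; your second excursion $u\,u\,r\,d\,d$ has four vertical steps and would yield a four-box snake $\lambda_2$. But $\lambda_1$ is a snake that immediately steps \emph{right} after its first box, while $\lambda_2$ steps \emph{up} after its first box. Neither is an initial sub-snake of the other, so by Definition~\ref{def:snakePIP} the pair $(\lambda_1,s_1)\nleftrightarrow(\lambda_2,s_2)$ is inconsistent regardless of the offsets. The color condition (ii) does not rescue this: it constrains a single snake, not the relationship between snakes coming from disjoint excursions. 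Hence your $\Phi(p)$ is not a consistent order ideal, and the map is not even well-defined into $X(C_{m,n})$. Your step (c) is also off: a switch-corners move in the interior of an excursion does not lengthen or shorten it; it slides one vertical link horizontally by one unit.

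The paper's approach avoids this by introducing an intermediate object you are missing: the \emph{coral tableau} (Lemma~\ref{lemma:statestotableau}). A position $P$ is encoded by a \emph{single} coral snake $\lambda$ (recording the sequence of rows of \emph{all} vertical links of $P$, across all excursions at once) together with a filling by the column indices of those links. The maximal elements of the corresponding order ideal are then the tight tableaux on \emph{prefixes} $\lambda_1\prec\lambda_2\prec\cdots\prec\lambda$ of this single snake, one for each ``jump'' of the tableau; since all the $\lambda_i$ are nested, consistency is automatic rather than something to be fought for. With this encoding in hand, the paper decomposes both $\S_{m,n}$ and $C_{m,n}$ by shape, identifies each piece $\S_{m,n}^\lambda$ with a distributive lattice via Birkhoff's theorem, and glues. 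Your outline would become correct if you replaced ``excursions'' by ``jumps of the coral tableau'' throughout, but discovering that encoding is precisely the content of Lemmas~\ref{lemma:tightstatestotableau} and~\ref{lemma:statestotableau}.
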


\begin{figure}[tbh]
	\centering
		\includegraphics[height = 10cm]{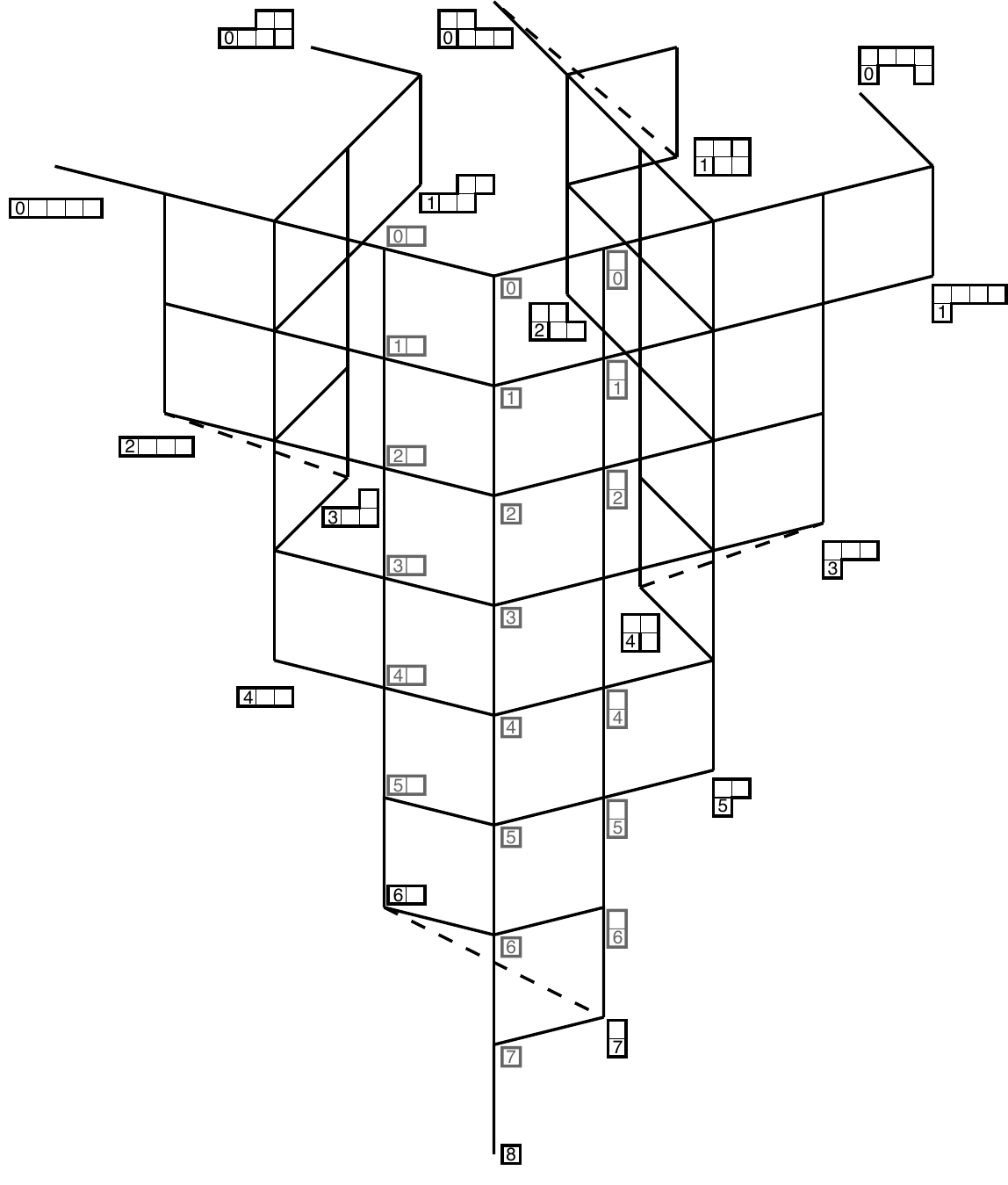}  
		\caption{The PIP $C_{m,n}$ for $m=2$ and $n=9$. Some of the labels are omitted; to obtain them, note that each vertical column consists of the elements $(\lambda, s)$ for a fixed shape $\lambda$ and $0 \leq s \leq n-l(\lambda)-w(\lambda)+1$, listed in decreasing order with respect to $s$.}
	\label{fig:PIP9_withpaths}
\end{figure}

\subsection{\textsf{Coral tableaux}}

Although we did not need to mention them explicitly in the statement of Theorem \ref{thm:CAT}, certain kinds of tableaux played a crucial role in its discovery, and are indispensible in its proof.

\begin{definition}\label{def:coralsnaketableau}
A \emph{coral snake tableau} (or simply \emph{coral tableau}) $T$  on a coral snake $\lambda$ is a filling of the squares of $\lambda$ with non-negative integers which are strictly increasing horizontally and weakly increasing vertically, following the direction of the snake. 
We call $\lambda=\sh(T)$ the \emph{shape} of $T$, and we say that $T$ is of \emph{type} $(m,n)$ if $h(\lambda) \leq m$ and $\max(T) + l(\lambda) \leq n$.
\end{definition}

Note that if $T$ is of type $(m,n)$, it is also of type $(m',n')$ for any $m' \geq m$ and $n' \geq n$.

\begin{definition}
We call a coral tableau \emph{tight} if its entries are constant along columns and increase by one along rows.   
\end{definition}

\begin{figure}[htbp]
	\centering
		\includegraphics[height = 2.2cm]{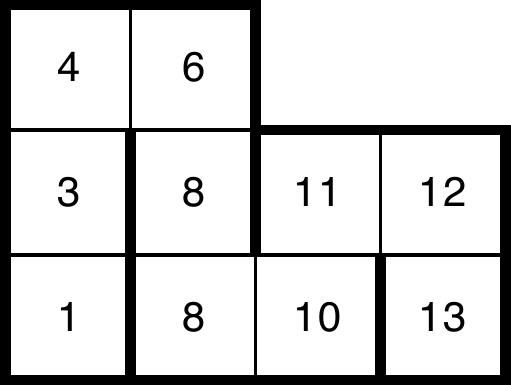} \qquad 
				\includegraphics[height = 2.2cm]{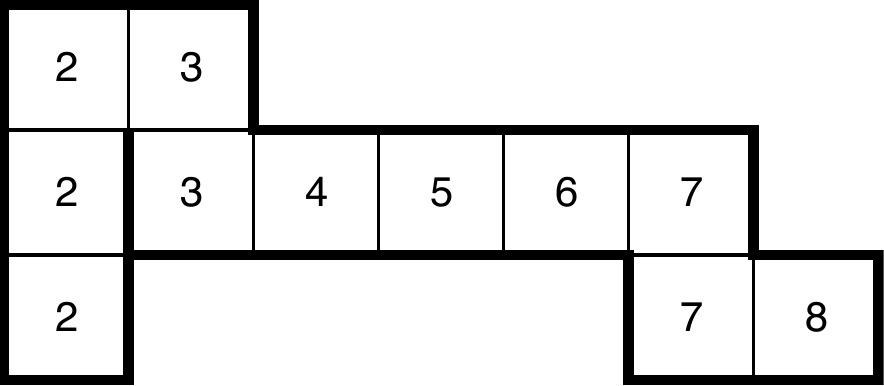}
	\caption{Two coral tableaux; the one on the right is tight.}
	\label{fig:snaketableaux}
\end{figure}

\begin{lemma}\label{lemma:tightstatestotableau}
There is a bijection between tight coral tableaux of type $(m,n)$ and the numbered coral snakes of the PIP $C_{m,n}$.
\end{lemma}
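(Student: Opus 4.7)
The plan is to exhibit the bijection explicitly and verify that the conditions on each side translate into one another. The key observation is that a tight coral tableau $T$ on a snake $\lambda$ is determined by a single integer. Indeed, tightness says the entries are constant along each (vertical) column of $\lambda$ and increase by exactly one each time the snake moves from one column to the next. Since the snake never moves left, its columns are linearly ordered from left to right, and so the entry in the $j$-th column (for $1\leq j\leq w(\lambda)$) must equal $s+j-1$, where $s:=\min(T)$ is the value in the leftmost column. In particular $\max(T)=s+w(\lambda)-1$, and conversely any choice of $s\geq 0$ produces a valid tight filling of $\lambda$.

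Given this, I would define the maps
$$\Phi\colon C_{m,n}\longrightarrow \{\text{tight coral tableaux of type }(m,n)\},\qquad \Phi(\lambda,s)=T_{\lambda,s},$$
where $T_{\lambda,s}$ is the unique tight tableau on $\lambda$ with minimum entry $s$, and
$$\Psi(T)=\bigl(\sh(T),\,\min(T)\bigr).$$
By the opening observation these are inverse to one another as soon as they are well-defined, so the whole proof reduces to checking that the constraints on both sides match. The height condition $h(\lambda)\leq m$ appears verbatim on both sides, so the content lies in the second inequality.

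The second inequality is where the two definitions meet. On the PIP side, an element $(\lambda,s)\in C_{m,n}$ must satisfy $0\leq s\leq n-l(\lambda)-w(\lambda)+1$; on the tableau side, type $(m,n)$ requires $\max(T)+l(\lambda)\leq n$. Using $\max(T_{\lambda,s})=s+w(\lambda)-1$, the tableau condition rearranges to
$$s\;\leq\; n-l(\lambda)-w(\lambda)+1,$$
which is exactly the PIP condition. Hence $\Phi$ sends an element of $C_{m,n}$ to a tight tableau of type $(m,n)$, and $\Psi$ sends a tight tableau of type $(m,n)$ to an element of $C_{m,n}$.

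There is no real obstacle in this lemma; it is essentially a dictionary between the two languages, and the main point is the bookkeeping identity $\max(T_{\lambda,s})=s+w(\lambda)-1$ that converts the ``length budget'' $n-l(\lambda)$ left over after placing the snake into the ``entry budget'' for the tight tableau. The lemma will do real work later on: it lets us speak about consistent order ideals of $C_{m,n}$ (which index vertices of $X(C_{m,n})$) in the concrete combinatorial language of tight coral tableaux, and this is the language in which the comparison with states of the robotic arm $R_{m,n}$ is carried out in the proof of Theorem \ref{thm:CAT}.
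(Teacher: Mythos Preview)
Your proof is correct and follows essentially the same approach as the paper: both identify the bijection as $T\mapsto(\sh(T),\min(T))$, use the key identity $\max(T)=\min(T)+w(\lambda)-1$ for tight tableaux, and verify that this translates the type-$(m,n)$ condition into the PIP inequality $s\le n-l(\lambda)-w(\lambda)+1$. You have simply written out the details more explicitly than the paper does.
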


\begin{proof}
A tight coral tableau $T$ is uniquely determined by its shape and first entry, so the bijection is given by sending $T$ to $(\sh(T), \min(T))$; it suffices to observe that $\max(T) = \min(T) + w(\lambda)-1$ when $T$ is tight, so the inequality $\min(T) \leq n- l(\lambda)-w(\lambda)+1$ holds if and only if $\max(T) + l(\lambda) \leq n$.
\end{proof}

For example, the tight tableau on the right of Figure \ref{fig:snaketableaux} corresponds to $(\lambda, 2)$ where $\lambda$ is the snake of Figure \ref{fig:snake}.

\begin{lemma}\label{lemma:statestotableau}
The possible states of the robotic arm $R_{m,n}$ are in  bijection with the coral tableaux of type $(m,n)$. 
\end{lemma}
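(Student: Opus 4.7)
The plan is to construct an explicit bijection $\Phi$ from the states of $R_{m,n}$ to the coral tableaux of type $(m,n)$. Given a state $\sigma$, I would first parse its link word in $\{u,d,r\}^n$ into maximal monotone runs, producing an alternating sequence of horizontal blocks (runs of $r$) and vertical blocks (runs of $u$ or of $d$). The vertical blocks record where and how far the arm travels vertically, while the horizontal blocks record the spacing between these excursions; this is exactly the data encoded by a coral snake together with a filling of its cells.

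More concretely, the snake $\lambda = \sh(T(\sigma))$ would be drawn by placing, for each vertical block $V_i$, a vertical column of $|V_i|$ cells oriented to match the direction of $V_i$, and then joining consecutive columns by horizontal strips whose lengths are dictated by the intervening horizontal blocks. The entries of $T(\sigma)$ record the horizontal tunnel-column index of each snake cell. Strict increase along a horizontal snake strip is automatic since its cells occupy distinct tunnel columns, and weak increase along a vertical snake column holds because stacked snake cells share a tunnel column. The bound $h(\lambda) \leq m$ translates the tunnel-height constraint, while $\max(T) + l(\lambda) \leq n$ comes from a routine accounting of horizontal and vertical arm links. The tight case of Lemma~\ref{lemma:tightstatestotableau} can be used as a sanity check: tight tableaux correspond to the especially symmetric arm states in which all horizontal bridges have unit length.

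The main obstacle is verifying the coral snake coloring condition (ii) and its equivalence with the non-self-intersection of the arm. Two consecutive vertical arm segments that face the same direction can coexist without collision only when the horizontal bridge between them has length of a specific parity relative to the heights of those vertical segments; the alternating black/red coloring of snake cells is designed precisely to capture this parity, so that the corner cells $C_1, C_2$ share a color if and only if the two vertical segments face the same direction. I would establish this equivalence by a case analysis on the relative sizes and orientations of $V_i$, $H_i$, $V_{i+1}$ and their corresponding snake corners, showing that each self-avoiding local configuration of the arm matches one and only one legal coloring pattern of the snake.

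To close the argument I would construct the inverse map explicitly: from a coral tableau, the shape of the snake reads off the sequence of vertical and horizontal blocks, the entries recover the horizontal positions of each vertical excursion, and the snake's colors dictate the up/down direction of each excursion, together yielding a well-defined arm link word. Showing that $\Phi$ and its inverse are mutually inverse is then routine, as is checking that the type-$(m,n)$ condition on tableaux corresponds bijectively to the length-$n$ and tunnel-width-$m$ constraints on arm states.
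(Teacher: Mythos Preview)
Your construction of the snake shape is off, and the error propagates into your handling of condition~(ii). In the actual bijection the snake has exactly one cell for each \emph{vertical} link of the arm, placed in the tunnel row that link crosses; its entry is the robot column of that link. Horizontal links contribute no cells whatsoever---they show up only through the entries. Thus the snake's \emph{shape} depends solely on the sequence of rows of the vertical links and is completely insensitive to the lengths of the horizontal runs. For example, $uru$ and $urrru$ yield the \emph{same} two-cell vertical snake, with fillings $(0,1)$ and $(0,3)$ respectively; and $uuruu$ produces a single four-cell vertical column, not two columns joined by a strip. So ``horizontal strips whose lengths are dictated by the intervening horizontal blocks'' is not the right picture, vertical snake segments are not in bijection with the arm's maximal vertical runs, and your argument for weak vertical increase (``stacked snake cells share a tunnel column'') already fails for $uru$, whose two stacked cells carry distinct entries.

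This misreading also derails your account of condition~(ii). Non-self-intersection for an $\{u,d,r\}$-word is simply the absence of a $ud$ or $du$ substring; no parity of horizontal bridge lengths enters. What condition~(ii) actually records is that along a horizontal run of the \emph{snake} at row $i$, the corresponding vertical arm links alternately cross row $i$ upward and downward; since the black/red coloring also alternates cell by cell along the snake, the color of each cell in that run encodes the direction of that crossing. Hence $C_1$ and $C_2$ have the same color if and only if the arm crosses row $i$ in the same direction at both corners, which is exactly when the next snake vertical segment $V_2$ points the same way as $V_1$. Once you have the correct map (vertical arm links $\to$ snake cells, robot columns $\to$ entries), this is a one-line check; your proposed case analysis on robot bridge lengths and vertical heights is aimed at the wrong parity.
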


\begin{proof}
We encode a position $P$ of the robotic arm as a coral tableau $T$, building it up from left to right as follows. Every time that the robot takes a vertical step in row $i$, we add a new square to row $i$ of the tableau, and fill it with the number of the column the step was taken in. An example is shown in Figure~\ref{fig:snake_from_state}.   

It is clear that the entries of $T$ increase weakly in the vertical direction, and increase strictly in the horizontal direction. It remains to check that the snake $\sh(T)$ is indeed a coral snake. Suppose $\sh(T)$ goes from a vertical $V_1$ to a horizontal $H$ to a vertical $V_2$, turning at corners $C_1$ and $C_2$ of row $i$. Assume for definiteness that $V_1$ points up and $C_1$ is black. Then the black and red entries on $H$ represent up and down steps that the robot takes on row $i$; so the direction of $V_2$ is determined by the color of $C_2$ as desired. \end{proof}

\begin{figure}[htbp]
	\centering
		\includegraphics[height = 2.1cm]{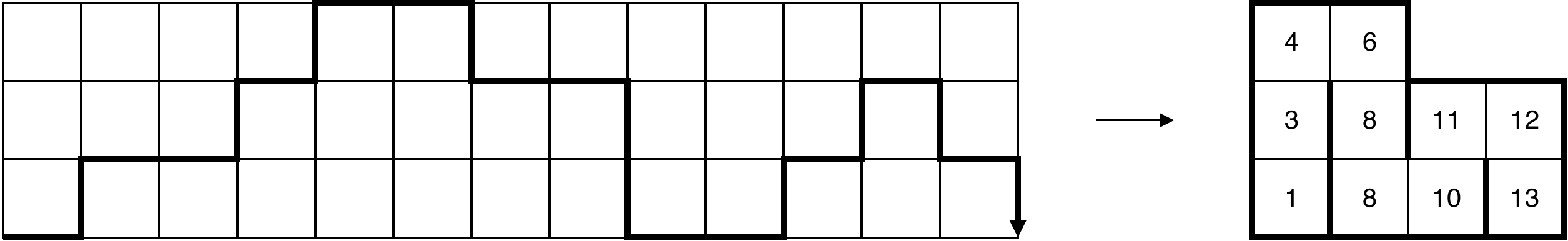}
	\caption{From a state $R$ to a coral tableau $T$.}
	\label{fig:snake_from_state}
\end{figure}

\begin{figure}[htbp]
	\centering
		\includegraphics[height = 2.1cm]{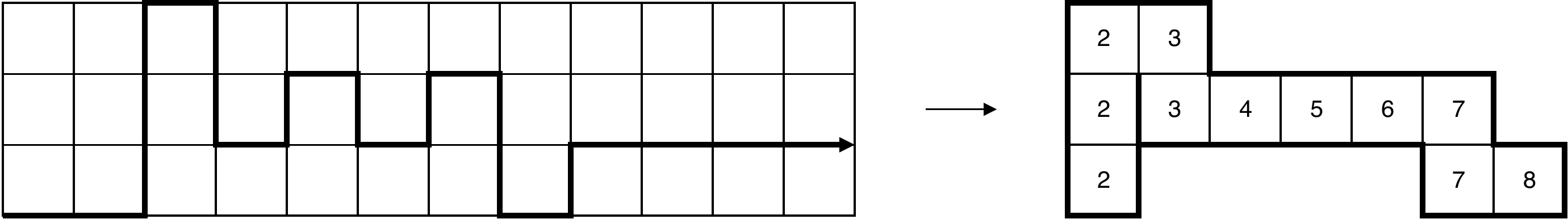}
	\caption{A tight state and its corresponding tight snake.}
	\label{fig:snake_tight}
\end{figure}

We call a state of the robot \emph{tight} if its corresponding snake tableau is tight, see Figure~\ref{fig:snake_tight}.

\subsection{\textsf{Proof of our main structural theorem}}

We are now ready to prove our strengthening of the main result of this paper, Theorem \ref{th:CAT(0)}. 

\begin{reptheorem}{thm:CAT}
The configuration space $\S_{m,n}$ of the robotic arm $R_{m,n}$ of length $n$ in a tunnel of width $m$ is a $\mathrm{CAT}(0)$ cubical complex. When it is rooted at the horizontal position of the arm, its corresponding PIP is the coral PIP $C_{m,n}$ of Definition~\ref{def:snakePIP}. 
\end{reptheorem}

\begin{proof}
We need to show that, when rooted at the horizontal position, $\S_{m,n}$ is the cubical complex $X(C_{m,n})$ associated to the PIP $C_{m,n}$ under the bijection of Theorem \ref{th:poset}. We proceed in three steps. 

\smallskip
\noindent \textsf{Step 1}. Decomposing $\S_{m,n}$ and $C_{m,n}$ by shape. 
\smallskip

For a coral snake $\lambda$ of type $(m,n)$ let $\S_{m,n}^\lambda$ be the induced subcomplex of $\S_{m,n}$ whose vertices are the coral tableaux $T$ with $\sh(T) \preceq \lambda$.
Similarly, let $C^\lambda_{m,n}$ be the subPIP of $C_{m,n}$ consisting of the pairs $(\mu, s)$ such that $\mu \preceq \lambda$. Notice that $C^\lambda_{m,n}$ is a poset which has no inconsistent pairs.
We have 
\begin{equation} \label{decompintolambdas}
\S_{m,n} = \bigcup_{\lambda \textrm{ of type } (m,n)} \S_{m,n}^\lambda
\qquad \qquad 
C_{m,n} = \bigcup_{\lambda \textrm{ of type } (m,n)} C_{m,n}^\lambda
\end{equation}
where each (non-disjoint) union is over the coral tableaux $\lambda$ of type $(m,n)$. (Of course, since $\lambda \preceq \lambda'$ implies $\S_{m,n}^\lambda \subseteq \S_{m,n}^{\lambda'}$ and $C_{m,n}^\lambda \subseteq C_{m,n}^{\lambda'}$, it is sufficient to take the union over those $\lambda$ which are maximal under inclusion.)

\smallskip
\noindent \textsf{Step 2}. Showing $\S_{m,n}^\lambda = X(C_{m,n}^\lambda)$ for each shape $\lambda$. 
\smallskip

We begin by establishing a bijection between the sets of vertices of both complexes.
The vertices of $X(C_{m,n}^\lambda)$ correspond to the consistent order ideals of the coral PIP $C_{m,n}^\lambda$. Since $C_{m,n}^\lambda$ has no inconsistent pairs, these are \emph{all} the order ideals of $C_{m,n}^\lambda$, which are the elements of the distributive lattice $J(C_{m,n}^\lambda)$. 

The vertices of $\S_{m,n}^\lambda$ correspond to the coral tableaux $T$ of type $(m,n)$ with shape $\sh(T) \preceq \lambda$ by Lemma \ref{lemma:statestotableau}. Let us extend each such tableau $T$ to a tableau of shape $\lambda$ by adding entries equal to $\infty$ on each cell of $\lambda - \sh(T)$. We may then identify the set of vertices $V(\S_{m,n}^\lambda)$ with the resulting set of \emph{extended coral tableaux} of shape $\lambda$ whose finite entries $x$ satisfy $x + l(\lambda) \leq n$.

This set $V(\S_{m,n}^\lambda)$ of extended $\lambda$-tableaux forms a poset under reverse componentwise order. Note that if $T_1$ and $T_2$ are in $V(\S_{m,n}^\lambda)$ then the componentwise maximum $T_1 \wedge T_2$ and the componentwise minimum $T_1 \vee T_2$ are also in $V(\S_{m,n}^\lambda)$. Then the meet $\wedge$ and join $\vee$ make $V(\S_{m,n}^\lambda)$ into a lattice. In fact, the definitions of $\wedge$ and $\vee$ imply that this lattice is distributive. Birkhoff's Fundamental Theorem of Distributive Lattices then implies that  $V(\S_{m,n}^\lambda) \cong J(P)$ where $P$ is the subposet of join-irreducible elements of $V(\S_{m,n}^\lambda)$. One easily verifies that $P$ consists precisely of the tight tableaux of type $(m,n)$ and shape $\lambda$, so $P \cong C_{m,n}^\lambda$. It follows that the coral tableaux $T$ of type $(m,n)$ with shape $\sh(T) \preceq \lambda$ are indeed in bijection with the consistent order ideals of the coral PIP $C_{m,n}^\lambda$.

We can make the bijection more explicit. Given a coral tableau $T$ of shape $\mu \preceq \lambda$, we can write $T=T_1 \vee T_2 \vee \cdots \vee T_{l(\mu)}$ as follows. For  each $i$ let $\mu_i$ be the subsnake consisting of the first $i$ boxes of $\mu$, and let $T_i$ be the unique tight tableau of shape $\lambda_i$ whose $i$th entry is equal to the $i$th entry of $T$. In fact we can reduce this to a minimal equality
\[
T = \bigvee_{i \textrm{ jump in } T} T_i
\]
where we say that $T$ \emph{jumps} at cell $i$ if $T$ remains a coral tableau of type $(m,n)$ when we increase its $i$th entry by $1$. The set $A(T) = \{T_i \, : \, i \textrm{ jump in } T\}$ is an antichain in $C_{m,n}^\lambda$, and the bijection above maps the coral tableau $T$ to the order ideal $I(T) \subseteq C_{m,n}^\lambda$ whose set of maximal elements is $I(T)_{max} = A(T)$. 

\begin{figure}[htbp]
	\centering
		\includegraphics[height = 1.3cm]{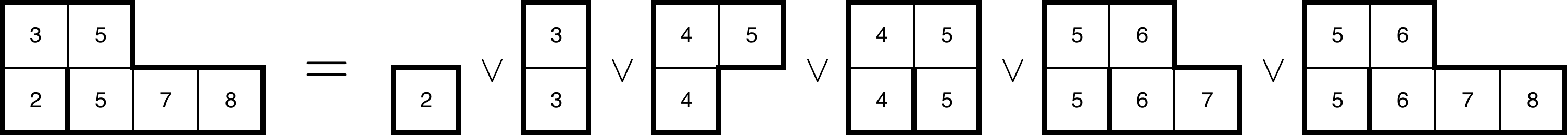}
	\caption{A coral tableau $T = T_1 \vee \cdots \vee T_6 = T_1 \vee T_2 \vee T_4 \vee T_6$ as a join of irreducibles.}
	\label{fig:tableau_as_a_join}
\end{figure}

Having established the bijection between the vertices of $\S_{m,n}^\lambda$ and $ X(C_{m,n}^\lambda)$, we now prove the isomorphism of these cubical complexes. Each cube $C(I,M)$ of $X(C_{m,n}^\lambda)$ is given by an order ideal $I \subseteq C_{m,n}^\lambda$ and a subset $M \subseteq I_{max}$. Let $T$ be the coral tableau corresponding to $I$, and $P$ the corresponding position of the robotic arm. Then $I_{max}$ corresponds to the set of ``descending" moves that can be performed at position $P$ to bring it closer to the horizontal position; namely, those of the form $\ulcorner \mapsto  \lrcorner$, $\llcorner \mapsto  \urcorner$, or flipping the end from a vertical position to a horizontal one facing right.

%
%\cesar{I changed ``$\ulcorner \mapsto  \lrcorner$" to ``$\ulcorner \mapsto  \lrcorner$, $\llcorner \mapsto  \urcorner$, or flipping the end from a vertical position to a horizontal one facing right" and the two pictures accordingly to include the other possible moves.}
Any subset $M$ of those moves can be performed simultaneously, so this subset corresponds to a cube in $\S_{m,n}^\lambda$. One may check that every cube of $\S_{m,n}^\lambda$ arises in this way from a cube in $X(C_{m,n}^\lambda)$, and that the cubical complex structure is the same for both complexes.

\begin{figure}[htbp]
	\centering
		\includegraphics[height = 1.3cm]{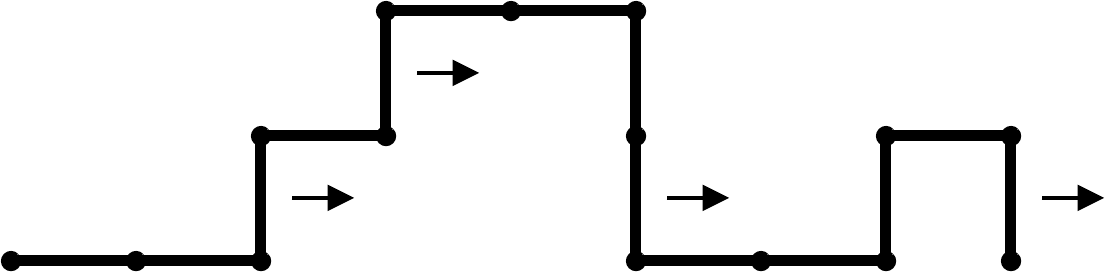}
	\caption{The cube corresponding to the moves above.}
	\label{fig:cube_as_moves}
\end{figure}

\smallskip
\noindent \textsf{Step 3}. Showing $\S_{m,n} = X(C_{m,n})$. 
\smallskip

Recall that, as $\lambda$ ranges over the shapes of type $(m,n)$, the subcomplexes $\S_{m,n}^\lambda$ cover the configuration space $\S_{m,n}$, and the subPIPs $C_{m,n}^\lambda$ (which happen to have no inconsistent pairs) cover the PIP $C_{m,n}$ by (\ref{decompintolambdas}). We now claim that the analogous statement  holds for the $\mathrm{CAT}(0)$ cube complex $X(C_{m,n})$ as well:
\begin{equation}\label{decompintolambdas2}
X(C_{m,n}) = \bigcup_{\lambda \textrm{ maxl. of type } (m,n)} X(C_{m,n}^\lambda).
\end{equation}
To see this, recall that each vertex $v$ of $X(C_{m,n})$ corresponds to a consistent order ideal $I=\{(\lambda_1, s_1), \ldots, (\lambda_k, s_k)\} \subseteq C_{m,n}$. Since $I$ is consistent, one of $\lambda_i$ and $\lambda_j$ contains the other one for all $i$ and $j$, and therefore the maximum shape $\lambda$ among them contains them all, and is of type $(m,n)$. It follows that $v$ is a vertex of $X(C_{m,n}^\lambda)$. Similarly, for any cube $C(I,M)$ of $X(C_{m,n})$, the consistent order ideal $I$ corresponds to a vertex $v$ in some $X(C_{m,n}^\lambda)$, and this means that $C(I,M)$ is in $X(C_{m,n}^\lambda)$ as well. 

Since $\S_{m,n}^\lambda = X(C_{m,n}^\lambda)$ for all $\lambda$, the last necessary step is to check that the decomposition $\S_{m,n} = \bigcup_\lambda \S_{m,n}^\lambda$ is compatible with the decomposition $X(C_{m,n}) = \bigcup_\lambda X(C_{m,n}^\lambda)$. 
This follows from the fact that for any $\lambda$ and $\mu$ we have
\[
\S_{m,n}^\lambda \cap \S_{m,n}^\mu = \S_{m,n}^\nu, \qquad 
X(C_{m,n}^\lambda) \cap X(C_{m,n}^\mu) = X(C_{m,n}^\nu);
\]
where $\nu = \lambda \wedge \mu$ is the largest coral snake which is less than both $\lambda$ and $\mu$.
\end{proof}

\begin{remark}
One might hope that Theorem~\ref{thm:CAT} could be generalized for a robotic arm moving in a $d$-dimensional tunnel $[0,n_1]\times \dots \times [0,n_d]$. However, the resulting cubical complexes are not $\mathrm{CAT}(0)$ in general. Even in the simplest 3-dimensional case $[0,n]\times [0,1]\times [0,1]$ the result does not generalize. Figure~\ref{fig:robots_3D} illustrates two examples of the cubical complexes for $n=1$ and $n=2$. The case $n=1$ consists of three vertices (states) and three edges (allowable moves) forming a triangle (without the interior face), while the case $n=2$ consists of seven vertices and eight edges forming an hexagon and a triangle glued together along an edge. In both cases, these cubical complexes are not CAT(0). For instance, they have non-contractible loops and there is not a unique geodesic connecting two opposite vertices of the hexagon.    

\begin{figure}[htbp]
	\centering
		\includegraphics[width = \textwidth]{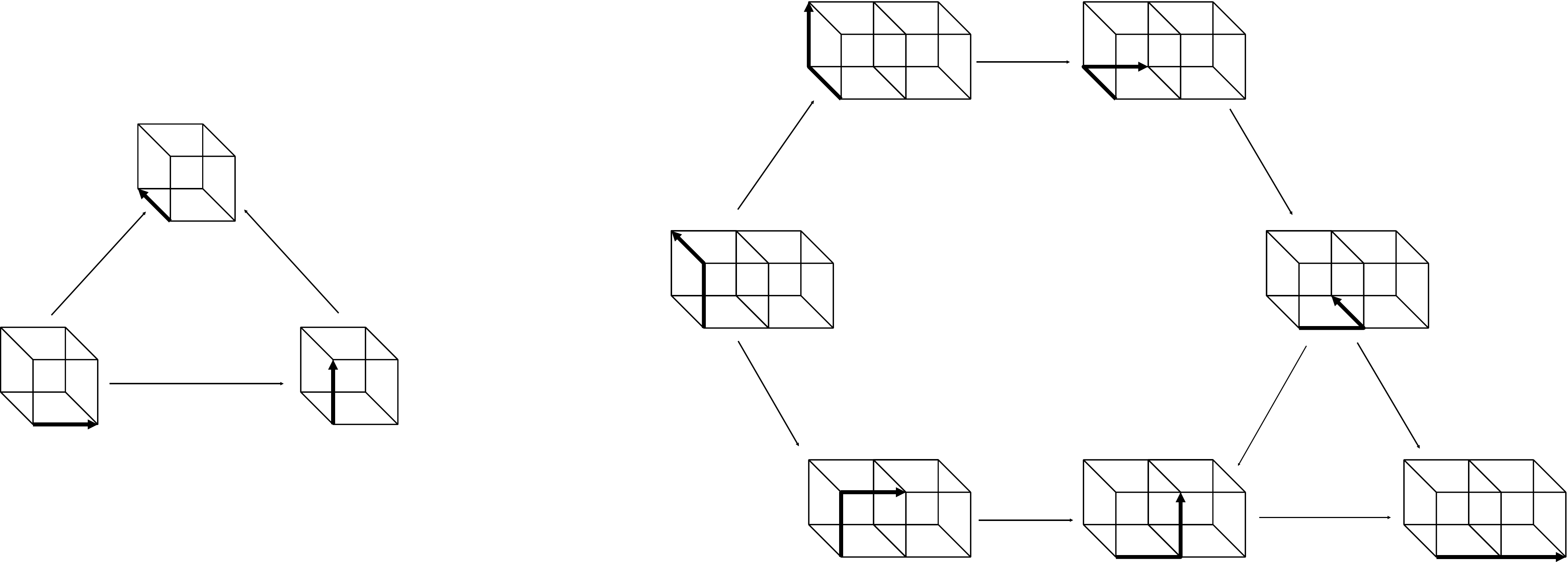}
	\caption{The cubical complex of a robotic arm in a 3-dimensional tunnel $[0,n]\times [0,1]\times [0,1]$ is not CAT(0) for $n=1$ or $n=2$.}
	\label{fig:robots_3D}
\end{figure}
  
\end{remark}

\section{\textsf{Shortest path and the diameter of the transition graph}}\label{sec:diameter}
The coral tableau representation turns out to be very useful for finding the distance between any two possible states of the robot in the transition graph $G(R_{m,n})$, as well as for finding its diameter. Before carrying this program out in detail, let us  briefly describe the intuition behind it.

By Theorem \ref{thm:CAT}, a position of the robot corresponds to a consistent order ideal in the coral PIP $C_{m,n}$  of Figures \ref{fig:coralPIP} and \ref{fig:PIP9_withpaths}. As we mentioned in Remark \ref{rem:width2} and these figures illustrate, these PIPs are obtained by gluing several \emph{sheets} of roughly triangular shape along some vertical spines; 
in the proof of Theorem \ref{thm:CAT}, 
these sheets are the subposets $C_{m,n}^\lambda$ for the maximal $\lambda$. Every such sheet grows out of the main spine, possibly branching along the way.
A careful look at the inconsistent pairs shows that any consistent order ideal in this PIP must be contained within a single sheet. 

Now suppose we wish to find the shortest path between two states $P$ and $P'$ of the robot. This is equivalent to finding the shortest way of transforming an order ideal $I$ in a sheet $T$ of the PIP into an order ideal $I'$ in another  sheet $T'$. To do this, one must first shrink the ideals $I$ and $I'$ into ideals $J \subseteq I $ and $J' \subseteq I'$ which lie in the intersection $T \cap T'$ of the sheets, and then find the best way of transforming $J$ into $J'$ inside $T \cap T'$. The following definitions make this more precise.

\newpage

\begin{definition}
Let $P$ and $P'$ be two positions of the robotic arm $R_{m,n}$, and $\lambda$ and $\lambda'$ be the shapes of their corresponding coral tableaux. Let $\lambda \wedge \lambda'$ be the largest coral snake contained in both $\lambda$ and $\lambda'$. 
We label the links of $P$ in decreasing order from $n$ to $1$ along the shape of the robot. The \emph{vertical labelling} of $P$ is the vector that reads only the labels of the vertical links of $P$ in the order they appear. This vector can be decomposed into two parts $v\circ w$, where $v$ consists of the labels  of the vertical links in $\lambda \wedge \lambda'$ and $w$ consists of the labels  of the vertical links in the complement $\lambda \backslash(\lambda \wedge \lambda')$.
 We call $v\circ w$ the \emph{$(P,P')$-decomposition of $P$}. Similarly, we also have a $(P',P)$-decomposition $v'\circ w'$ of the vertical labelling of $P'$.

%\begin{figure}[htbp]
%	\centering
%%	\begin{overpic}[height=3.7cm, grid]{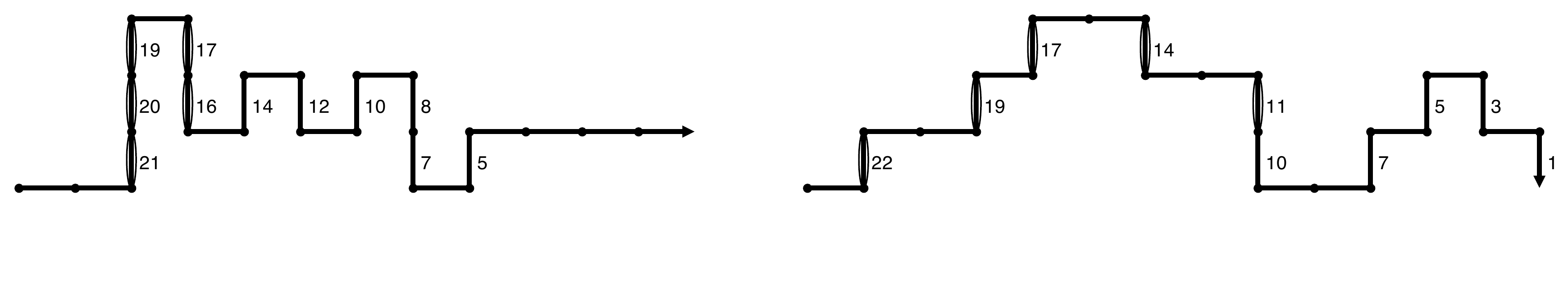}
%	\begin{overpic}[height=3.7cm]{robots_vertical_decomposition}
%		\put(15,22){\footnotesize $P$}
%		\put(8,2.8){\footnotesize $v=(20,19,17,15,13,12)$}
%		\put(8,0){\footnotesize $w=(10,8,7,5,3,2)$}
%		\put(75,22){\footnotesize $P'$}
%		\put(65,2.8){\footnotesize $v'=(21,18,16,14,11,10)$}
%		\put(65,0){\footnotesize $w'=(7,5,3,2,1)$}
%	\end{overpic}
%	\caption{The vertical labelling of two robots of length 21 and their decompositions.}
%	\label{fig:robots_vertical_decomposition}
%\end{figure}

\medskip

\begin{figure}[htbp]
	\centering
%	\begin{overpic}[height=3.7cm, grid]{robots_vertical_decomposition}
	\begin{overpic}[width=0.95\textwidth]{robots_vertical_decomposition}
		\put(22,19){\footnotesize $P$}
		\put(11,2.8){\footnotesize $v=(21,20,19,17,16)$}
		\put(11,0){\footnotesize $w=(14,12,10,8,7,5)$}
		\put(75,19){\footnotesize $P'$}
		\put(65,2.8){\footnotesize $v'=(22,19,17,14,11)$}
		\put(65,0){\footnotesize $w'=(10,7,5,3,1)$}
	\end{overpic}
	\caption{The vertical labellings of two positions of length 23
	%the robot of length 23 
	%shown in Figures \ref{fig:snake_from_state} and \ref{fig:snake_tight}, 
	%together with
	and their decompositions.}
	\label{fig:robots_vertical_decomposition}
\end{figure}

\end{definition}

Figure~\ref{fig:robots_vertical_decomposition} illustrates this definition for the positions $P$ and $P'$ of the robot of length 23 in width 3 shown in Figures \ref{fig:snake_tight} and \ref{fig:snake_from_state}, respectively; the snakes $\lambda, \lambda',$ and $\lambda \wedge \lambda'$ are the shapes of the coral tableaux $T,T'$, and $T_1$ is Figure~\ref{fig:snake_decomposition} respectively. The smallest number of moves to get from position $P$ to position $P'$ is $d(P,P')=94$, as predicted by the following theorem.

\begin{proposition}\label{prop:distance}
Let $P$ and $P'$ be two positions of the robotic arm $R_{m,n}$. The distance between $P$ and $P'$ in the transition graph $G(R_{m,n})$ is equal to 
\begin{equation}\label{eq:distance}
d(P,P') = |w| + |v-v'| + |w'|,
\end{equation}
where $v\circ w$ is the $(P,P')$-decomposition of $P$,
$v'\circ w'$ is the $(P',P)$-decomposition of $P'$, and $|\cdot|$ denotes the $l_1$-norm, that is, $|(a_1, \ldots, a_k)| = |a_1| + \cdots + |a_k|$.
\end{proposition}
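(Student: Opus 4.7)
My approach is to combine the PIP description of $\S_{m,n}$ from Theorem~\ref{thm:CAT} with the standard distance formula for CAT(0) cube complexes. Using Sageev's hyperplane decomposition, the $l_1$-distance between two vertices of $X(C_{m,n})$ equals the number of hyperplanes separating them, which under the bijection of Theorem~\ref{th:poset} equals the size of the symmetric difference of the corresponding consistent order ideals. So I would begin by identifying
\[
d(P,P') \;=\; |I(P) \triangle I(P')|,
\]
and then carefully count the right-hand side.

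The next step is to describe $I(P)$ layer by layer. For each coral snake $\mu$, the elements $(\mu,s) \in C_{m,n}$ form a chain, and tracing through the reverse componentwise order on $\lambda$-tableaux from the proof of Theorem~\ref{thm:CAT} shows that $(\mu,s) \in I(P)$ iff $\mu \preceq \lambda$ and $s \ge s_{\min}(\mu,P)$, where
\[
s_{\min}(\mu,P) \;=\; \max_{1 \le j \le l(\mu)}\bigl(e_j - c_j(\mu)\bigr),
\]
with $e_j$ the $j$-th entry of $T_P$ and $c_j(\mu)$ the column of the $j$-th cell of $\mu$ in the snake's own coordinates. Splitting $I(P) \triangle I(P')$ by shape then yields three contributions: shapes $\mu \preceq \lambda \wedge \lambda'$ contribute $|s_{\min}(\mu,P) - s_{\min}(\mu,P')|$; shapes $\mu \preceq \lambda$ with $\mu \not\preceq \lambda'$ contribute $s_{\max}(\mu) - s_{\min}(\mu,P) + 1$, where $s_{\max}(\mu) = n - l(\mu) - w(\mu) + 1$; shapes $\mu \preceq \lambda'$ with $\mu \not\preceq \lambda$ contribute symmetrically.

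The main technical obstacle is the monotonicity identity $s_{\min}(\lambda_k,P) = e_k - c_k(\lambda)$, i.e., that the maximum in the definition of $s_{\min}$ is attained at $j=k$, or equivalently that the sequence $(e_j - c_j(\lambda))_{j=1}^{l(\lambda)}$ is non-decreasing. This has a clean geometric reason via the construction of Lemma~\ref{lemma:statestotableau}: a right step in the snake between cells $j$ and $j+1$ occurs precisely when the $j$-th and $(j+1)$-th vertical links of $P$ are assigned to the same tableau row, which happens exactly when these two consecutive vertical links of $P$ point in opposite directions; but two consecutive opposite-direction vertical links cannot share a tunnel column without the robot self-intersecting, so at least one horizontal link of $P$ must lie between them. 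Hence $e_{j+1} - e_j \ge c_{j+1}(\lambda) - c_j(\lambda)$ (the inequality being automatic when the snake step is vertical, since then the right-hand side is $0$).

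With the identity in hand, the computation collapses. For each prefix $\lambda_k$ of $\lambda$ lying beyond $\lambda \wedge \lambda'$, the count simplifies to $(n - k - c_k) - (e_k - c_k) + 1 = n - k - e_k + 1$, which is exactly the label of the $k$-th vertical link of $P$; summing over such $k$ gives $|w|$, and the symmetric computation for $P'$ yields $|w'|$. For $\lambda_k \preceq \lambda \wedge \lambda'$ we have $c_k(\lambda) = c_k(\lambda')$, so the identity gives $|s_{\min}(\lambda_k,P) - s_{\min}(\lambda_k,P')| = |e_k(P) - e_k(P')| = |v_k - v'_k|$, since the label $n - k - e_k + 1$ is affine in $e_k$ with slope $-1$; summing produces $|v - v'|$. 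Adding the three contributions yields \eqref{eq:distance}.
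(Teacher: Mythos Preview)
Your argument is correct, and it takes a genuinely different route from the paper's.

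The paper proves Proposition~\ref{prop:distance} by first establishing the tableau formula
\[
d(P,P') \;=\; |\overline{T_2}-T_2| + |T_1-T_1'| + |\overline{T_2'}-T_2'|
\]
directly and elementarily: allowable moves are described as unit changes in tableau entries or as adding/removing a last box at its maximal value, a counting argument gives the lower bound, and an explicit path $T \to T_1 \to \max(T_1,T_1') \to T_1' \to T'$ realizes it. Only then is this translated into the $|w|+|v-v'|+|w'|$ language. By contrast, you invoke the general CAT(0) fact $d(P,P')=|I(P)\triangle I(P')|$ and then perform a layer-by-layer count of the symmetric difference over the chains $\{(\lambda_k,s)\}_s$ in $C_{m,n}$. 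Your approach is more conceptual and leans on Theorem~\ref{thm:CAT} in an essential way; the paper's approach is more hands-on and, as a byproduct, exhibits an explicit geodesic in $G(R_{m,n})$, which your argument does not.

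One remark on presentation: your ``main technical obstacle'', the monotonicity of $(e_j - c_j(\lambda))_j$, is in fact immediate from Definition~\ref{def:coralsnaketableau}. A horizontal snake step gives $c_{j+1}-c_j=1$ and the strict horizontal increase of coral tableaux gives $e_{j+1}-e_j\ge 1$; a vertical snake step gives $c_{j+1}-c_j=0$ and the weak vertical increase gives $e_{j+1}-e_j\ge 0$. Your geometric justification via self-intersection is correct, but it is re-deriving precisely the inequalities already built into the definition of a coral tableau (and proved in Lemma~\ref{lemma:statestotableau}); citing the definition would streamline the write-up.
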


We will prove this proposition in Section~\ref{sec:distance_proof}. We will also use it to find an explicit formula for the diameter of the transition graph $G(R_{m,n})$.

If the width of the tunnel is~$m=1$ one can easily find the two positions of the robot that are at maximum distance from each other. We use the letters $u,r$ and $d$ for links pointing up, right and down respectively. 

\begin{lemma}
For~$m=1$, the maximum distance between two positions of the robot in the transition graph $G(R_{1,n})$ is attained by the pair $(urdrurdr\dots, rr\dots r)$ of \emph{left justified} and \emph{fully horizontal} robotic arms. The diameter of $G(R_{1,n})$ is
\[
\diam G(R_{1,n})=
\left\{ \begin{array}{rcl}
         \frac{n(n+2)}{4} & \mbox{for} & n\text{ even} \\ 
	\frac{(n+1)^2}{4}  & \mbox{for} & n\text{ odd}.
                \end{array}\right.
\]
\end{lemma}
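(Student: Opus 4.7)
My plan is to specialize Theorem~\ref{thm:CAT} to width $m=1$, where the coral PIP becomes remarkably simple, and then read off the diameter directly. The starting observation is that a coral snake of height at most $1$ is confined to the single bottom row of the tunnel and can only take rightward steps, so it must be a horizontal strip $\lambda_k$ of $k \geq 1$ cells starting at the left. Since $\lambda_i \preceq \lambda_j$ whenever $i \leq j$, any two such snakes are comparable under $\preceq$, so the inconsistency clause of Definition~\ref{def:snakePIP} is never triggered. Hence $C_{1,n}$ has no inconsistent pairs, and every order ideal of $C_{1,n}$ is automatically consistent.

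Next I would invoke the standard hyperplane interpretation of CAT(0) cube complexes: the graph distance in the $1$-skeleton between two vertices equals the cardinality of the symmetric difference of their associated order ideals. For $m=1$ this reduces to an elementary fact, since by Theorem~\ref{thm:CAT} the $1$-skeleton of $\S_{1,n}=X(C_{1,n})$ is exactly the Hasse diagram of the distributive lattice $J(C_{1,n})$, in which each edge toggles one element. It follows that
\[
\diam G(R_{1,n}) \;=\; \max_{I,J \in J(C_{1,n})}|I \triangle J| \;=\; |C_{1,n}|,
\]
attained by $I = \emptyset$ and $J = C_{1,n}$. Counting $|C_{1,n}|$ by snake size, the strip $\lambda_k$ satisfies $l(\lambda_k) = w(\lambda_k) = k$ and therefore contributes $n - 2k + 2$ pairs $(\lambda_k, s)$ for $0 \leq s \leq n - 2k + 1$ whenever $2k \leq n + 1$; setting $k_{\max} = \lfloor(n+1)/2\rfloor$ and summing
\[
|C_{1,n}| \;=\; \sum_{k=1}^{k_{\max}}(n - 2k + 2)
\]
splits into the two residue classes $n(n+2)/4$ and $(n+1)^2/4$ exactly as claimed.

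Finally I would identify the two extremal states. The empty ideal corresponds to the root of $\S_{1,n}$, which is the fully horizontal arm $\Hor = rr\cdots r$. For the full ideal $C_{1,n}$, I would compute the coral tableau of $\Left = urdrurdr\cdots$ using Lemma~\ref{lemma:statestotableau}: its vertical links appear at the consecutive columns $0, 1, \ldots, k_{\max}-1$, so its tableau is the tight tableau of shape $\lambda_{k_{\max}}$ with minimum entry~$0$. By Lemma~\ref{lemma:tightstatestotableau} this is the numbered snake $(\lambda_{k_{\max}}, 0)$, which is the unique maximal element of $C_{1,n}$; the bijection $T \mapsto I(T)$ from the proof of Theorem~\ref{thm:CAT} then sends it to the principal order ideal generated by $(\lambda_{k_{\max}}, 0)$, which is all of $C_{1,n}$. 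The only real obstacle is this explicit identification in the last step; everything else is mechanical once one notices that all of the inconsistent-pair subtleties of Theorem~\ref{thm:CAT} evaporate in width $m=1$.
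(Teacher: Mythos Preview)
Your proof is correct. The paper itself does not supply a proof of this lemma; it is stated as an easy observation preceding the $m \geq 2$ case, so there is no ``paper's own proof'' to compare against. Your approach --- specializing Theorem~\ref{thm:CAT} to width $1$, observing that $C_{1,n}$ is a genuine poset (no inconsistent pairs) with unique maximum $(\lambda_{k_{\max}},0)$, and reading the diameter off as $|C_{1,n}|$ --- is a clean and self-contained way to fill this gap. All the verifications (the count $\sum_{k=1}^{k_{\max}}(n-2k+2)$, the identification of the root with $\Hor$ and of the full ideal with $\Left$) are correct.

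The one alternative route implicit in the paper's surrounding material would be to use Proposition~\ref{prop:distance} directly: for $m=1$ every coral snake is a horizontal strip, so any two shapes are comparable, and maximizing $|w|+|v-v'|+|w'|$ forces the intersection shape to be empty, $w'$ to vanish, and $w$ to be as large as possible --- which singles out $(\Left,\Hor)$. Your PIP argument is arguably tidier, since it bypasses the distance formula entirely and reduces everything to the cardinality of a finite poset.
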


For larger $m$ the situation is rather different.
Define the \emph{left justified robot} $\Left$, the \emph{shifted left justified robot} $\Leftt$, 
 and the \emph{fully horizontal robot} $\Hor$ of length~$n$ as
\[
\Left=u^mrd^mru^mr\dots, \quad \Leftt=(urdr)(u^mrd^mru^mr\dots), \quad \Hor=rrr\dots,
\]
respectively. The first two of these are illustrated in Figure~\ref{fig:robot_left}. 

%
%For larger $m$ the situation is rather different.
%Define the \emph{left justified robot} of length~$n$ as 
%\[
%\Left=u^mrd^mru^mr\dots,
%\]
%the \emph{shifted left justified robot} of length~$n$ as
%\[
%\Leftt=(urdr)(u^mrd^mru^mr\dots),
%\] 
%and the \emph{fully horizontal robot} of length~$n$ as
%\[
%\Hor=rrr\dots.
%\]
%The first two of these are illustrated in Figure~\ref{fig:robot_left}. 

\begin{figure}[htbp]
	\centering
	\includegraphics[height=2.2cm]{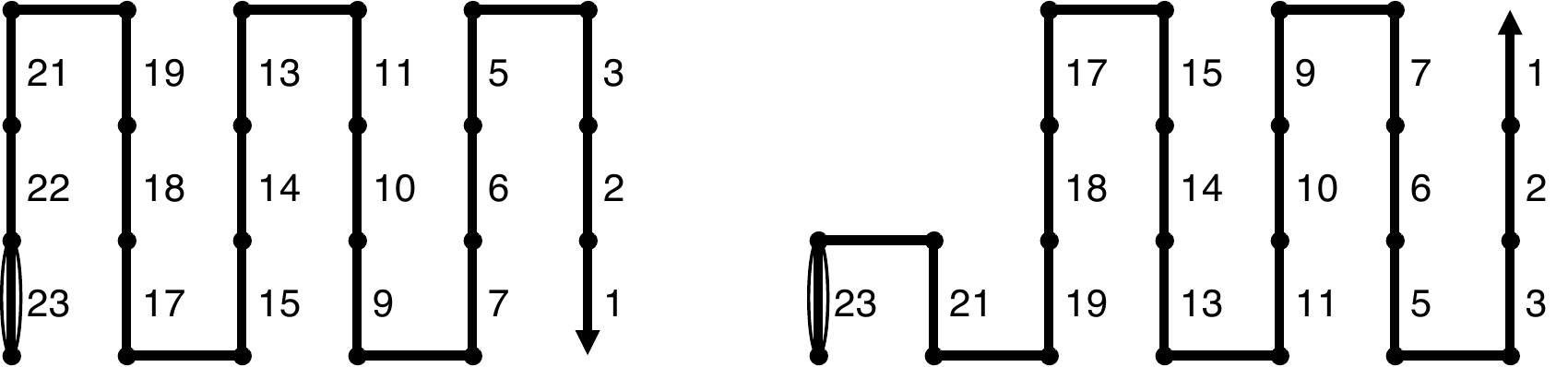}
	\caption{The left justified robot $L_{3,23}$ and the shifted left justified robot $L_{3,23}$.}
	\label{fig:robot_left}
\end{figure}

\begin{theorem}\label{thm:diameter}
The diameter of the transition graph $G(R_{m,n})$ of a robot on length $n$ in a tunnel of width $m\geq 2$ is 
\[
\diam G(R_{m,n}) =
\left\{ \begin{array}{lcl}
        d(\Left,\Hor) & \mbox{for} & n<6 \\ 
	d(\Left,\Leftt)  & \mbox{for} & n\geq 6.
                \end{array}\right.
\]
These distances are explicitly given by
\begin{align*}
d(\Left,\Hor) &= s_n - z_{m,n} \\
d(\Left,\Leftt) &= s_{n-1} - z_{m,n} + s_{n-2} - z_{m,m+n-3},
\end{align*}
where $s_n= {n+1 \choose 2}$ and $z_{m,n}=(r+1)k+{k\choose 2}(m+1)$ with $n=(m+1)k+r$, $0\leq r \leq m$.
\end{theorem}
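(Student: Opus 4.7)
The plan is to build on Proposition~\ref{prop:distance}, which expresses $d(P,P')$ as $|w| + |v-v'| + |w'|$ in terms of the $(P,P')$- and $(P',P)$-decompositions of the two vertical labellings, and to analyze how each of the three summands behaves as we range over all pairs of positions.

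First I would verify the two closed-form identities directly. For $d(\Left,\Hor)$: the fully horizontal robot has no vertical links, so its coral snake is trivial, forcing $v'$ and $w'$ to be empty. The distance collapses to $|w|$, the sum of labels of the vertical links of $\Left$. Since $\Left=u^mrd^mru^mr\cdots$ has horizontal links precisely in positions $j(m+1)$, writing $n=(m+1)k+r$ with $0\le r\le m$ gives the sum of horizontal-link labels as exactly $z_{m,n}=(r+1)k+\binom{k}{2}(m+1)$, so $|w|=s_n-z_{m,n}$. For $d(\Left,\Leftt)$, the $(urdr)$ prefix of $\Leftt$ forces the two shapes to agree only on a short initial piece; the decomposition then splits $d(\Left,\Leftt)$ naturally as the sum of two $\Left$-versus-horizontal-type contributions, one of effective length $n-1$ and one of effective length $n-2$, yielding the summands $s_{n-1}-z_{m,n}$ and $s_{n-2}-z_{m,m+n-3}$.

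The main obstacle is then to show that these values are maximal. For a general pair $(P,P')$, the triangle inequality gives
\[
d(P,P') \;\le\; \Sigma(P)+\Sigma(P'),
\]
where $\Sigma(P)$ denotes the total vertical-label sum of $P$; this alone shows $d(P,P')\le 2(s_n-z_{m,n})$, since $\Left$ uniquely maximizes $\Sigma$ over all positions (pushing verticals toward the base makes them carry the largest labels). This crude bound is not tight: achieving a large $\Sigma(P)+\Sigma(P')$ simultaneously forces many low-position vertical links in \emph{both} robots, which in turn forces the common subsnake $\lambda\wedge\lambda'$ to be long, which causes $|v-v'|$ to fall far short of $|v|+|v'|$ because corresponding vertical links in a long common subsnake can differ in column position by only $O(m)$. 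I would formalize this tradeoff by fixing the common shape $\nu=\lambda\wedge\lambda'$ and maximizing $d(P,P')$ over all pairs with $\sh(P)\wedge\sh(P')=\nu$; at the optimum, both $P$ and $P'$ extend $\nu$ by the $\Left$-zigzag pattern (the locally distance-maximizing continuation), which reduces the problem to a finite optimization over the shapes $\nu$.

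Finally, I would compare the extremal candidates produced in the previous step. Only two of them dominate: $\nu=\emptyset$ recovers the bound $d(\Left,\Hor)$, and $\nu$ a single initial cell recovers the bound $d(\Left,\Leftt)$. Expressing both candidates as polynomials in $n$ via the closed forms for $s_n$ and $z_{m,n}$ shows that $d(\Left,\Hor)\ge d(\Left,\Leftt)$ precisely when $n<6$, producing the cutoff in the statement; the small cases $n\le 5$ can in any event be checked directly using the PIPs in Figure~\ref{fig:coralPIP}.
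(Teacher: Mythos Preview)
Your high-level strategy---fix the intersection shape $\nu=\lambda\wedge\lambda'$, maximize $d(P,P')$ over pairs with that intersection, then optimize over $\nu$---is exactly the route the paper takes. But the central intermediate claim is wrong as stated, and this is where the real work lies. For a fixed middle snake $\nu$ it is \emph{not} true that the optimal pair has both $P$ and $P'$ extending $\nu$ by the zigzag pattern. The paper's Lemma~\ref{lem:d1} shows the optimum is $\max\{d(L_\nu,H_\nu),\,d(L_\nu,L_\nu^+)\}$, and in the first candidate $P'=H_\nu$ has shape exactly $\nu$ with no extension at all; for side snakes $\nu$ (including $\nu=\emptyset$, where one position is forced to be $\Hor$) only this first candidate is available. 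Proving this requires more than the triangle inequality and an informal tradeoff: the paper introduces a one-parameter family $L_{\nu,k}^+$ interpolating between $H_\nu$ and $L_\nu^+$, reduces to this family via two transformations (Lemma~\ref{lem:maximizingdistancewithgivenshape2}), compares the two possible orderings (Lemma~\ref{lem:maximizingdistancewithgivenshape1}), and then shows $d(L_\nu,L_{\nu,k}^+)$ is concave up in $k$ so that the maximum sits at an endpoint (Lemma~\ref{lem:maximizingdistancewithgivenshape3}). Your bound $d(P,P')\le\Sigma(P)+\Sigma(P')$ does not get you to either candidate.

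There is a second gap: you assert that ``only two of them dominate'' without argument. This is the content of Lemma~\ref{lem:d2}, a strict monotonicity statement showing that both $d(L_\lambda,H_\lambda)$ and $d(L_\lambda,L_\lambda^+)$ decrease as $\lambda$ grows, which is what pins the optimum to the minimal shapes $\lambda=\emptyset$ and $\lambda=\square$. Finally, your sketch of the closed form for $d(\Left,\Leftt)$ (``two $\Left$-versus-horizontal contributions of effective lengths $n-1$ and $n-2$'') is too vague to verify, and you still need a clean comparison of the two candidates; the paper handles both at once via the identity $d(\Left,\Leftt)=d(\Left,\Hor)+s_{n-4}-z_{m,n-4}-2$, from which the cutoff at $n=6$ is immediate.
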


This answers a question of the first author at the Open Problem Session of the Encuentro Colombiano de Combinatoria ECCO 2014 \cite{ArdilaECCO}.

\subsection{\textsf{Proof of Proposition~\ref{prop:distance}}}\label{sec:distance_proof}

Let $T$ and $T'$ be two coral tableaux of fixed type $(m,n)$, and let~$\lambda$ and~$\lambda'$ their corresponding shapes. 
We decompose $T$ into two parts $T_1$ and $T_2$ of shapes $\lambda\wedge \lambda'$ and~$\lambda \smallsetminus (\lambda\wedge \lambda')$ respectively. Similarly, we decompose $T'$ into two parts $T_1'$ and $T_2'$
of shapes $\lambda\wedge \lambda'$ and~$\lambda' \smallsetminus (\lambda\wedge \lambda')$ respectively.   
Additionally, we create another filling of $\lambda$ starting with the number $n$ in the bottom left square and decreasing each time by one along the snake. The restriction of this filling to the shape~$\lambda \smallsetminus (\lambda\wedge \lambda')$ is denoted by $\overline{T_2}$. The filling  $\overline{T_2'}$ of ~$\lambda' \smallsetminus (\lambda\wedge \lambda')$ is defined analogously.
These constructions are llustrated in Figure~\ref{fig:snake_decomposition}.  

\begin{figure}[htbp]
	\centering
		\begin{overpic}[width = \textwidth]{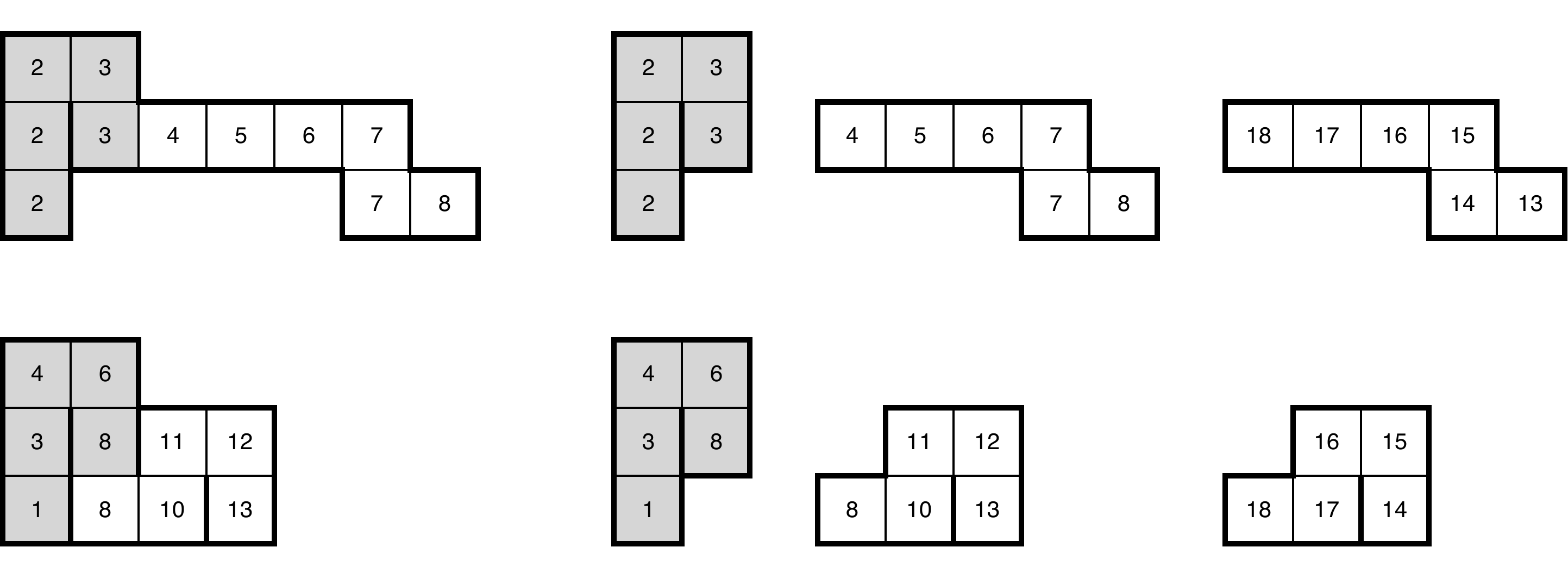}
%%		\begin{overpic}[width = \textwidth,grid,tics=5]{snake_decomposition}
			\put(8,18.5){\small $T$}
			\put(42,18.5){\small $T_1$}
			\put(58,18.5){\small $T_2$}
			\put(84,18.5){\small $\overline{T_2}$}

			\put(8,-1){\small $T'$}
			\put(42,-1){\small $T_1'$}
			\put(58,-1){\small $T_2'$}
			\put(84,-1){\small $\overline{T_2'}$}
		\end{overpic}
	\caption{Decomposition of two coral tableaux $T$ and $T'$ of type $(3,23)$.}
	\label{fig:snake_decomposition}
\end{figure}

Given two fillings $T$ and $T'$ of the same shape $\lambda$, we denote by $T-T'$ the filling of $\lambda$ whose entries are the differences between the entries in $T$ and $T'$. Note that some of the entries of~$T-T'$ might be negative. Recall that $|T-T'|$ is the sum of the absolute values of $T-T'$.

\begin{lemma}
Let $P$ and $P'$ be two positions of the robotic arm $R_{m,n}$, and $T$ and $T'$ be the corresponding coral tableaux . Using the notation above, the distance between $P$ and $P'$ in the transition graph $G(R_{m,n})$ is equal to 
\[
d(P,P') = |\overline{T_2}-T_2| + |T_1-T_1'| + |\overline{T_2'}-T_2'|.
\]
\end{lemma}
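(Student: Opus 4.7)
My plan is to bound $d(P,P')$ by the right-hand side in both directions.

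For the upper bound, I would construct an explicit path from $P$ to $P'$ in three stages: (i) shrink the tableau $T$ to the common shape $\lambda\wedge\lambda'$; (ii) adjust the entries of the common part from $T_1$ to $T_1'$; (iii) grow back up to $T'$. Stage (i) proceeds by deleting the cells of $T_2$ one at a time from the end of the snake: for the current last cell $i$, use switch-corner moves to slide it to its rightmost admissible column, and then perform a single flip-the-end move. The key geometric observation is that a flip-the-end deleting cell $i$ is legal only once the corresponding vertical link is the very last link of the robot, which in turn forces its column to attain its maximum value, so that the combined cost of switches plus the flip equals exactly $\overline{T}_i - T_i$. Summing over $i\in T_2$ yields $|\overline{T_2}-T_2|$. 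Stage (ii) uses $|T_1-T_1'|$ switch-corner moves (one per unit difference within the common part). Stage (iii) is symmetric to Stage (i), and contributes $|\overline{T_2'}-T_2'|$.

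For the lower bound, my plan is to introduce a potential function. For an arbitrary state $Q$ on a path from $P$ to $P'$, with tableau $T^Q$ decomposed relative to $\sh(T^Q)\wedge\lambda'$, set
\[
\Phi(Q)\;=\;\bigl|\overline{T_2^Q}-T_2^Q\bigr|\;+\;\bigl|T_1^Q-T_1'\bigr|\;+\;\bigl|\overline{T_2'}-T_2'\bigr|.
\]
Then $\Phi(P)$ equals the right-hand side of the lemma while $\Phi(P')=0$. The heart of the argument is to verify that every legal move changes $\Phi$ by at most one in absolute value; granting this, any path from $P$ to $P'$ has length at least $\Phi(P)$, and so $d(P,P')\geq \Phi(P)$, matching the upper bound.

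The main obstacle is the case analysis needed to check $|\Delta\Phi|\leq 1$. Switch-corner moves are easy since they alter a single entry by $\pm 1$ and therefore change exactly one summand of $\Phi$ by $\pm 1$. Flip-the-end moves are more delicate because they change $\sh(T^Q)$, and can simultaneously change the common shape $\sh(T^Q)\wedge\lambda'$. I would split the analysis into three subcases according to whether the cell added or removed by the flip lies strictly inside the non-common part of the snake, sits at the boundary of the common part (thereby shrinking $\sh(T^Q)\wedge\lambda'$), or extends the common part by newly matching the next cell of $\lambda'$. In each subcase, the explicit form of $\overline{T}_i - T_i$, together with the fact that the current last vertical link must sit at its rightmost admissible column in order for the flip to be legal, produces a net change of exactly $\pm 1$ in $\Phi$.
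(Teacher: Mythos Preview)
Your upper-bound construction is exactly the paper's: delete the cells of $T_2$ from last to first (sliding each to its maximal column and then flipping), adjust $T_1$ to $T_1'$ entrywise, then grow $T_2'$; the cost accounting is identical.

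For the lower bound you take a genuinely different route. The paper does not introduce a potential function; instead it partitions all moves according to which cell of the coral tableau they act on. For a cell $i$ in $\lambda\wedge\lambda'$ the entry must go from $T_i$ to $T_i'$, so at least $|T_i-T_i'|$ moves occur there; for a cell $i$ in $\lambda\smallsetminus(\lambda\wedge\lambda')$ the cell must eventually be deleted, and since deletion is only legal after the entry has been raised to its maximum value, at least $\overline{T}_i-T_i$ moves occur there; symmetrically for cells in $\lambda'\smallsetminus(\lambda\wedge\lambda')$. Since each move touches a single cell, these three lower bounds add. This is shorter than your approach and avoids tracking intermediate states altogether.

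Your potential-function strategy can be made to work, but as written $\Phi$ is not well-defined. You decompose $T^Q$ relative to $\sh(T^Q)\wedge\lambda'$, so $T_1^Q$ has shape $\sh(T^Q)\wedge\lambda'$; but $T_1'$ was defined to have shape $\lambda\wedge\lambda'$, and these two shapes differ in general, so the expression $|T_1^Q-T_1'|$ has no meaning. The same issue blocks $\Phi(P')=0$: at $Q=P'$ one has $\sh(T^{P'})\wedge\lambda'=\lambda'$, so $T_1^{P'}=T'$ has shape $\lambda'$, again mismatching $T_1'$; and the third summand, being the fixed constant $|\overline{T_2'}-T_2'|$, does not vanish. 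The repair is to decompose $T'$ and $\overline{T'}$ relative to the \emph{moving} shape $\sh(T^Q)\wedge\lambda'$ as well, so that all three summands depend on $Q$. With that correction the boundary values are right and your case analysis can be carried through; note, however, that in the subcase where a flip changes $\sh(T^Q)\wedge\lambda'$, two of the three summands now change simultaneously (a term migrates between the second and third summands), and you must verify that the net effect is still $\pm 1$. This is true, but it is more bookkeeping than the paper's direct cell-by-cell count.
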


In the example of Figure~\ref{fig:snake_decomposition}, $d(P,P')=56+12+26=94$. We will show a shortest path subdivided into three parts connecting $T\rightarrow T_1\rightarrow T_1'\rightarrow T'$ of lengths 56, 12, and 26.

\begin{proof}
First, we prove that the number of moves needed to move the robot $P$ to $P'$ is at least the claimed number. For this we analyze the possible moves of the robot in terms of the corresponding coral tableau. There are two kinds of moves: switching a corner of the robot corresponds to either increasing or decreasing an entry of the coral tableau by one, such that the result is still a coral tableau. Flipping the end of the robot corresponds to either deleting or adding a last box of the tableau when its entry has the maximum possible value $n-l(\lambda)$, in agreement with Definition \ref{def:coralsnaketableau}. For simplicity, we call these steps \emph{allowable tableau moves}.  
  
Let $\lambda$ and $\lambda'$ be the shapes of $T$ and $T'$ respectively. Let $T_1,T_2,\overline{T_2}$ and $T_1',T_2',\overline{T_2'}$ as above. 
In order to move from $T$ to $T'$ with allowable tableau moves it is necessary to: \\
(i) make disappear all entries of $T_2$ in $T$,\\
(ii) convert all the entries of $T_1$ to the entries of $T_1'$, and \\
(iii) make appear all entries in $T_2'$.
%\begin{enumerate}[(i)]
%\item make disappear all entries of $T_2$ in $T$,
%\item convert all the entries of $T_1$ to the entries of $T_1'$, and
%\item make appear all entries in $T_2'$.
%\end{enumerate}
We claim that the number of moves required to do these three steps is at least $|\overline{T_2}-T_2|$, $|T_1-T_1'|$ and $|\overline{T_2'}-T_2'|$ respectively, from which we deduce that the distance between~$T$ and~$T'$ is at least 
\[|\overline{T_2}-T_2| + |T_1-T_1'| + |\overline{T_2'}-T_2'|.\] 

To prove the first claim, let $t$ and $\overline{t} = n-l(\lambda)+1$ be the last entries of $T_2$ and $\overline{T_2}$, respectively. To make the last cell of $T_2$ disappear, we need to perform $n - l(\lambda) - t$ allowable tableau moves at that entry to increase it to its maximum value of $n-l(\lambda)$, and one additional move to remove the cell, for a total of $|\overline{t}-t|$ moves. Continuing analogously, we see that if we wish to achieve (i) we need to perform at least $|\overline{T_2} - T_2|$ tableau moves in $\lambda' - (\lambda \wedge \lambda')$.
Similarly, to achieve (iii) we need at least $|\overline{T_2'}-T_2'|$ in $\lambda - (\lambda \wedge \lambda')$. Finally, since each allowable move changes an entry by one, we require at least $|T_1-T_1'|$ moves in 
$(\lambda \wedge \lambda')$ to carry out (ii).

%
%Recall that $\overline{T_2}$ is constructed by filling the entries of $\lambda$ with the numbers $n,n-1,n-2,\dots$ in the order of the snake, and then restricting to the shape of $T_2$. Since the robot is of length~$n$, these entries are equal to 1 plus the maximum entry that each of the boxes may have. Therefore, to make disappear an entry $t$ of $T_2$ one needs at least $\overline{t}-t$ moves, where $\overline{t}$ is the corresponding entry in $\overline{T_2}$. So, we need at least $|\overline{T_2}-T_2|$ moves to perform step $(i)$. Similarly, we need at least $|\overline{T_2'}-T_2'|$ moves to perform step $(iii)$. Since each allowable move can change an entry only by one, the number of moves to change all entries of $T_1$ to the entries of $T_1'$ we need at least $|T_1-T_1'|$ moves.

The previous argument is also a roadmap for how to achieve this lower bound. 
%For this we perform steps $(i)$, $(ii)$ and $(iii)$ in order in exactly the desired number of moves, making a path $T\rightarrow T_1\rightarrow T_1'\rightarrow T'$ of minimal length from $T$ to $T'$. It is clear that we can 
We first go from tableau $T$ to $T_1$ in $|\overline{T_2}-T_2|$ moves by removing the boxes of $T_2$ in order from the last to the first. Then we then go from $T_1$ to $\operatorname{max}\{T_1,T_1'\}$ by increasing the entries one at a time from the last to the first box, and from $\operatorname{max}\{T_1,T_1'\}$ to $T_1'$ by decreasing the entries one at a time, again from the last to the first box; this takes a total of $|T_1-T_1'|$ moves.
Finally,  we connect $T_1'$ to $T'$ by adding all the entries in~$T_2'$ from first to last, using $|\overline{T_2'}-T_2'|$ moves.   
\end{proof}

Proposition~\ref{prop:distance} is now a direct consequence of the previous lemma.

\begin{proof}[Proof of Proposition~\ref{prop:distance}]
Let $P$ and $P'$ be two positions of the robotic arm $R_{m,n}$, and $T$ and~$T'$ be their corresponding coral tableaux. We need to show that
\[
|w|=|\overline{T_2}-T_2|, \quad \quad |v-v'|=|T_1-T_1'|, \quad  \quad  |w'|=|\overline{T_2'}-T_2'|.
\]  
%where $v\circ w$ and $v'\circ w'$ are the $(P,P')$-decompositions of $P$ and $P'$.
The entry of a box in~$\overline{T_2}-T_2$ counts the number of links in $P$ after the corresponding vertical link, including it. Therefore, the entries of $w$ are exactly equal to the entries of $\overline{T_2}-T_2$. Similarly, the entries of $w'$ are equal to the entries of $\overline{T_2'}-T_2'$. 
Now, if $v=(v_1,\dots,v_k)$ and if $(t_1,\dots, t_k)$ are the entries of $T_1$ in the order they appear along the snake, then $v_i=n-(t_i+i-1)$ for each $i$. Therefore the entries of $v-v'$ are exactly the negatives of the entries of $T_1-T_1'$, and hence $|v-v'|=|T_1-T_1'|$.   
\end{proof}

\subsection{\textsf{Proof of Theorem~\ref{thm:diameter}}}\label{sec:diameter_proof}

Throughout this section we fix $m\geq 2$ and $n\in \mathbb N$. 
We say that a coral snake $\lambda$ is of type~$(m,n)$ if it appears in the coral PIP $C_{m,n}$; that is, if $h(\lambda)\leq m$ and $l(\lambda)+w(\lambda)-1\leq n$.  
All the coral snakes and positions of the robot considered in this section are of type $(m,n)$.

\begin{definition}
The \emph{shape} of a position $P$ of the robot, denoted $\shape(P)$,  is the shape of its corresponding coral tableau. The \emph{intersection shape} of two positions $P$ and $P'$ is defined as~$\shape(P)\wedge \shape(P')$. 
\end{definition}

Our strategy to find the diameter of the transition graph $G(R_{m,n})$ is to find the maximum distance between two positions of the robot with a fixed intersection shape~$\lambda$, and then maximize this quantity over all  shapes $\lambda$. We need to distinguish two kinds of snakes.

\begin{definition}
A coral snake of type $(m,n)$ is said to be a:\\
$\bullet$
side snake: if the end point of any robot with coral tableau of shape $\lambda$ is on one of the two horizontal sides of the tunnel. \\
$\bullet$
middle snake: if it is not a side snake.  
%\begin{itemize}
%\item side snake: if the end point of any robot with coral tableau of shape $\lambda$ is on one of the two horizontal sides of the tunnel. 
%\item middle snake: if it is not a side snake.  
%\end{itemize}
\end{definition}  

\begin{definition}
We define the following special positions of the robot (see Figure~\ref{fig:robots_specialpositions}):\\
$\bullet$ $\LeftLambda$: the position with componentwise maximum vertical labelling among all positions whose coral tableau contains the shape $\lambda$.\\
$\bullet$  $\LefttLambda$: the position with componentwise maximum vertical labelling among all positions whose coral tableau contains the shape $\lambda$ and whose intersection shape with $\LeftLambda$ is $\lambda$. \\
$\bullet$ $\HLambda$: the position with componentwise minimum vertical labelling among all robots whose coral tableau has shape $\lambda$. 
%\begin{itemize}
%\item $\LeftLambda$: the position with componentwise maximum vertical labelling among all positions whose coral tableau contains the shape $\lambda$.
%\item  $\LefttLambda$: the position with componentwise maximum vertical labelling among all positions whose coral tableau contains the shape $\lambda$ and whose intersection shape with $\LeftLambda$ is $\lambda$.
%\item $\HLambda$: the position with componentwise minimum vertical labelling among all robots whose coral tableau has shape $\lambda$. 
%\end{itemize}
\end{definition}

\begin{figure}[htbp]
	\centering
%	\begin{overpic}[width=0.8\textwidth,grid]{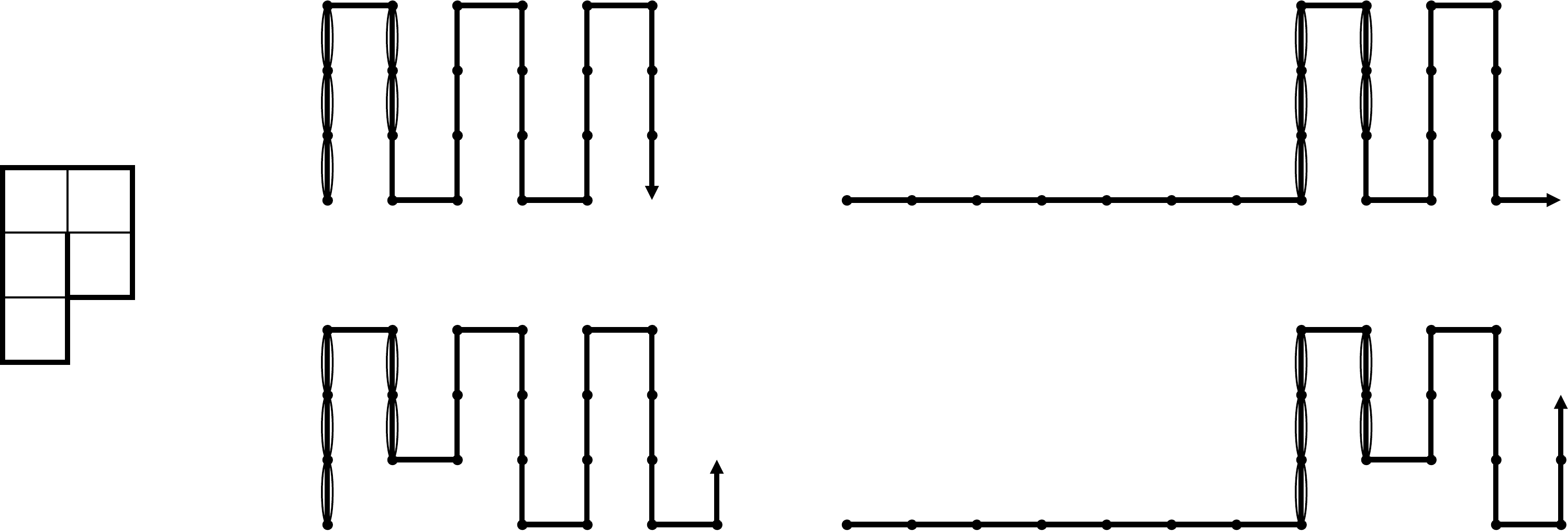}
	\begin{overpic}[width=0.8\textwidth]{robots_specialpositions}
		\put(3,7){\footnotesize $\lambda$}
		\put(16,27){\footnotesize $\LeftLambda$}
		\put(16,7){\footnotesize $\LefttLambda$}
		\put(72,27){\footnotesize $\LeftLambdak$}
		\put(72,7){\footnotesize $\LefttLambdak$}
		\put(84,1.5){\scriptsize $k$}
		\put(84,22){\scriptsize $k$}
	\end{overpic}
	\caption{Four special positions of the robot for the given $\lambda$, $(m,n)=(3,23)$ and $k=16$.}
	\label{fig:robots_specialpositions}
\end{figure}

The following two technical lemmas are the key steps to proving Theorem \ref{thm:diameter}.

\newpage

\begin{lemma}\label{lem:d1}
The maximum distance between two positions of the robot with fixed intersection shape $\lambda$ is:
\begin{enumerate}[(i)]
\item $d(\LeftLambda,\HLambda)$, if $\lambda$ is a side snake.
\item $\max\{ d(\LeftLambda,\HLambda), d(\LeftLambda,\LefttLambda) \}$, if $\lambda$ is a middle snake. 
\end{enumerate}
\end{lemma}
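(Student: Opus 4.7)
The plan is to apply the distance formula of Proposition~\ref{prop:distance}, $d(P,P') = |w|+|v-v'|+|w'|$, and maximize each contribution subject to the constraint $\shape(P)\wedge\shape(P') = \lambda$.

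The first step is to establish a structural dichotomy about strict extensions of $\lambda$. If $\lambda$ is a \emph{side snake}, its endpoint lies on the top or bottom wall of the tunnel, so any coral snake strictly extending $\lambda$ is forced to proceed uniquely along that wall; hence any two strict extensions share at least the first additional cell, which would force $\shape(P)\wedge\shape(P')\supsetneq\lambda$. So at most one of $P, P'$ may strictly extend $\lambda$. If $\lambda$ is a \emph{middle snake}, its endpoint lies strictly inside the tunnel and admits exactly two immediate extensions (turning up or turning down), so both $P$ and $P'$ may strictly extend $\lambda$ provided they do so in opposite directions; the pair $(\shape(\LeftLambda),\shape(\LefttLambda))$ realizes such a configuration.

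For case (i), by the dichotomy we may assume $\shape(P')=\lambda$, so $|w'|=0$ and $d(P,P')=|w|+|v-v'|$. The position $\LeftLambda$ attains the componentwise maximum vertical labelling among shapes containing $\lambda$, so $v_{\LeftLambda}\geq v_P$ on the $\lambda$-part, and its tableau entries on the extension are componentwise minimal, which makes $|w_{\LeftLambda}|$ maximal. Similarly, $\HLambda$ attains the componentwise minimum labelling among shape-$\lambda$ positions, so $v'_{\HLambda}\leq v'_{P'}$. Since $v_{\LeftLambda}\geq v'_{\HLambda}$ componentwise (they are the two-sided extremal labellings), these inequalities combine to give $|v_P - v'_{P'}| \leq |v_{\LeftLambda}-v'_{\HLambda}|$ and $|w_P|\leq |w_{\LeftLambda}|$, proving $d(P,P')\leq d(\LeftLambda,\HLambda)$. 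For case (ii), if one of $P, P'$ has shape exactly $\lambda$ the case (i) argument gives $d(P,P')\leq d(\LeftLambda,\HLambda)$; otherwise $P$ and $P'$ strictly extend $\lambda$ in opposite directions, and the same maximization shows $|w_P|\leq |w_{\LeftLambda}|$ and $|w_{P'}|\leq |w_{\LefttLambda}|$. Crucially, both $\LeftLambda$ and $\LefttLambda$ are left-justified on the $\lambda$-part of their tableaux, so their entries coincide there; consequently $v_{\LeftLambda}=v_{\LefttLambda}$ and $|v_{\LeftLambda}-v_{\LefttLambda}|=0$, yielding $d(\LeftLambda,\LefttLambda)=|w_{\LeftLambda}|+|w_{\LefttLambda}|$, which bounds $d(P,P')$ in this subcase. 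Taking the max over the two subcases gives (ii).

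The main obstacle will be the trade-off in the middle snake subcase where both shapes strictly extend $\lambda$: one must verify that deviating from left-justification on either branch (which would create a nonzero $|v-v'|$) never overcomes the simultaneous loss in $|w|$ or $|w'|$. The cleanest route is a local-exchange lemma: shifting a single $\lambda$-tableau entry of $P$ rightward by one column either decreases $|w_P|$ by the same amount (when the shift propagates across the $\lambda$/extension boundary) or merely redistributes labels without changing the total contribution. Iterating this exchange shows that the pair $(\LeftLambda,\LefttLambda)$ is simultaneously optimal for all three norms.
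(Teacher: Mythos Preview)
Your argument for case~(i) is essentially correct and matches the paper's: once $\shape(P')=\lambda$ there is no $|w'|$ term, and pushing the $\lambda$-part of $P$ fully left while pushing $P'$ fully right simultaneously maximizes $|w|$ and $|v-v'|$, yielding the pair $(\LeftLambda,\HLambda)$.

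The real problem is case~(ii), in the subcase where both $P$ and $P'$ strictly extend $\lambda$. You assert that $d(P,P')\le d(\LeftLambda,\LefttLambda)$ there, and your local-exchange lemma is meant to justify this. But the inequality is \emph{false} in general. Take $P=\LeftLambda$ and $P'=\LefttLambdak$ for $k$ just above $l(\lambda)+w(\lambda)-1$: the $\lambda$-part of $P'$ is pushed almost all the way right, so $|v-v'|$ is nearly maximal while $|w'|$ is tiny. Then $d(P,P')$ is close to $d(\LeftLambda,\HLambda)$, and whenever $d(\LeftLambda,\HLambda)>d(\LeftLambda,\LefttLambda)$ (which does occur, e.g.\ for $\lambda=\square$ and $n<6$ by the computations underlying Theorem~\ref{thm:diameter}), your bound fails even though both positions strictly extend $\lambda$. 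So the local-exchange lemma, as you intend it, would prove a false statement; the trade-off between $|v-v'|$ and $|w'|$ is genuinely two-sided and cannot be resolved by pushing both positions to the left-justified extreme.

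The paper handles this by introducing a one-parameter family: after two distance-nondecreasing transformations, any pair $(P,P')$ is dominated by some $(\LeftLambda,\LefttLambdak)$. It then proves that $k\mapsto d(\LeftLambda,\LefttLambdak)$ is \emph{concave up}, so its maximum over $k$ is achieved at one of the two endpoints, which correspond exactly to $\HLambda$ and $\LefttLambda$. This is what produces the $\max$ of the two values, rather than trying to bound the ``both extend'' subcase by $d(\LeftLambda,\LefttLambda)$ alone. Your case split into ``one has shape $\lambda$'' versus ``both strictly extend'' does not line up with where the maximum actually lands; the missing idea is the intermediate family $\LefttLambdak$ and the concavity argument.
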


\begin{lemma}\label{lem:d2}
The following hold:
\begin{enumerate}[(i)]
\item If $\lambda \prec \lambda'$, then 
$d(\LeftLambda,\HLambda)>d(\LeftLambdaP{\lambda'},\HLambdaP{\lambda'})$.
\item If $\lambda \prec \lambda'$ are two middle snakes, then 
$d(\LeftLambda,\LefttLambda)>d(\LeftLambdaP{\lambda'},\LefttLambdaP{\lambda'})$.
\end{enumerate}

\end{lemma}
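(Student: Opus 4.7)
The plan is to prove both (i) and (ii) by induction on the number of cells in $\lambda' \setminus \lambda$, so that it suffices to handle the case where $\lambda'$ is obtained from $\lambda$ by appending a single coral cell $c$.

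For part (i), I would apply Proposition~\ref{prop:distance} to both pairs. Since $\HLambda$ has shape exactly $\lambda$, the intersection shape of $\LeftLambda$ and $\HLambda$ is $\lambda$ itself and the ``$w$-part'' of $\HLambda$'s decomposition is empty, giving
\begin{equation*}
d(\LeftLambda,\HLambda) = |w_L| + |v_L - v_H|,
\end{equation*}
and an analogous formula for the primed pair. Passing from $\lambda$ to $\lambda'$ moves the cell $c$ from the non-shared tail of $\LeftLambda$ into the enlarged intersection shape: one vertical label that was contributing its full ``distance-to-horizontal'' cost to $|w_L|$ is replaced by the difference between two tight-tableau entries in the shared $v$-block, which is strictly smaller. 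By writing each labelling explicitly in terms of the column positions of the vertical steps, a direct computation would then show that the net change is strictly negative, with the analysis splitting into cases according to whether $c$ extends $\lambda$ by a rightward step or by a vertical step (and, in the latter case, whether it continues or reverses the previous vertical direction).

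For part (ii), the same approach applies with $\LefttLambda$ in place of $\HLambda$. The hypothesis that both $\lambda$ and $\lambda'$ are middle snakes guarantees that $\LefttLambda$ and $\LefttLambdaP{\lambda'}$ are both well-defined and have the ``shifted left-justified'' structure illustrated in Figure~\ref{fig:robots_specialpositions}. Now the $(P,P')$-decomposition of $\LefttLambda$ against $\LeftLambda$ has a nontrivial $w$-part on both sides, so all three terms of the distance formula are in play; but a parallel term-by-term comparison using the same local motion of $c$ still yields a strict decrease, because $\LeftLambdaP{\lambda'}$ and $\LefttLambdaP{\lambda'}$ agree with $\LeftLambda$ and $\LefttLambda$ on a larger initial segment of their vertical labellings.

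The main obstacle will be the bookkeeping in the case analysis: both the $v$- and $w$-blocks of the two decompositions shift when we enlarge the intersection shape, and one must track these shifts carefully. I expect the trickiest subcase to be when $c$ is a turning cell of $\lambda'$, since the coloring rule of Definition of coral snakes then forces a reorientation of the next vertical segment in $\LeftLambda$'s tail, and one must verify that this reorientation does not cancel the savings produced by absorbing $c$ into the intersection shape. In part (ii) one additionally needs to check that, within middle snakes, the chosen local extension preserves the specific structure of $\LefttLambda$ used in the computation.
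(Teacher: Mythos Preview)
Your inductive plan would eventually work for part (i), but it is considerably heavier than necessary, and for part (ii) it rests on a misconception.

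For (i), the paper's argument is one line: by Proposition~\ref{prop:distance}, passing from $\lambda$ to the snake with the last box removed increases $d(\LeftLambda,\HLambda)$ by exactly $\ell(\lambda)$. There is no need for a case split on the type of the added cell; the tight-tableau description of $\LeftLambda$ and $\HLambda$ makes the change in each term of the distance formula uniform.

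For (ii), the key point you are missing is that $|v-v'|=0$ identically. Both $\LeftLambda$ and $\LefttLambda$ realise the \emph{same} tight tableau on the common shape $\lambda$ (each has componentwise maximum vertical labelling on that part), so their $v$-blocks coincide. Thus $d(\LeftLambda,\LefttLambda)=|w|+|w'|$, and only two terms are in play, not three. With this observation the problem reduces to showing that $|w|$ and $|w'|$ are monotone in $\lambda$; the first is immediate, and the second the paper handles with a short lemma comparing sums of decreasing sequences with arithmetic-progression gaps.

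Your stated justification --- that $\LeftLambdaP{\lambda'}$ and $\LefttLambdaP{\lambda'}$ ``agree with $\LeftLambda$ and $\LefttLambda$ on a larger initial segment of their vertical labellings'' --- is not generally true. If the appended cell $c$ is a rightward step, then $\LeftLambdaP{\lambda'}$ must take a horizontal step where $\LeftLambda$ took a vertical one, so the two positions diverge immediately after $\lambda$ and no extra agreement is gained. The same happens when $c$ points in the direction that $\LefttLambda$ (rather than $\LeftLambda$) chose after $\lambda$. Your case analysis would therefore have to treat these situations separately, and the ``turning cell'' worry you flag is not the real obstacle. Recognising $|v-v'|=0$ removes all of this bookkeeping at once.
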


We postpone the proof of these two lemmas for the moment and use them to prove our main diameter result. 

\begin{proof}[Proof of Theorem~\ref{thm:diameter}]
By Lemma~\ref{lem:d1}, the diameter of the transition graph $G(R_{m,n})$ is the maximum between the two values 
\[
\max_{\lambda} d(\LeftLambda,\HLambda), \quad   \quad  \max_{\text{middle snakes }\lambda} d(\LeftLambda,\LefttLambda).
\] 
By Lemma~\ref{lem:d2}, we have
\[
\max_{\lambda} d(\LeftLambda,\HLambda)=
d(\LeftLambdaP{\emptyset},\HLambdaP{\emptyset}), \quad   \quad  
\max_{\text{middle snakes }\lambda} d(\LeftLambda,\LefttLambda)= 
d(\LeftLambdaP{\square},\LefttLambdaP{\square}).
\] 
where $\emptyset$ is the empty snake and $\square$ is the snake that consists of only one box.
By definition, we have that $\LeftLambdaP{\emptyset}=\LeftLambdaP{\square}=\Left$, 
$\HLambdaP{\emptyset} = \Hor$ and $\LefttLambdaP{\square}=\Leftt$.
Therefore,
\[
\diam G(R_{m,n}) = \max \{d(\Left, \Hor), d(\Left, \Leftt)\}.
\]

The explicit formulas for these distances stated in Theorem \ref{thm:diameter} are obtained directly from Proposition~\ref{prop:distance}. For instance, $d(\Left, \Hor)$ is the sum of all vertical labels in $\Left$. This sum is equal to the sum $s_n=1+\dots+n$ of all labels (including the horizontal ones) minus the sum $z_{m,n}$ of the horizontal labels. The formula for $d(\Left, \Leftt)$ is obtained similarly, considering the labels of $\Leftt$ as well.   

When $n \geq 4$, we may rewrite $d(\Left,\Leftt) = d(\Left, \Hor) + s_{n-4} - z_{m,n-4} - 2$, which shows that $d(\Left,\Leftt) \geq d(\Left, \Hor)$ for $n \geq 6$ and $d(\Left,\Leftt) < d(\Left, \Hor)$ for $n=4, 5$. The cases $n=1,2,3$ are easily verified by hand.
%
%By Proposition~\ref{prop:distance}, a straight forward verification shows that \mbox{$d(\Left, \Leftt) \geq d(\Left, \Hor)+1$} for $n\geq 6$, and $d(\Left, \Leftt) \leq d(\Left, \Hor)-1$ otherwise. The result follows.
\end{proof}

\subsubsection{\textsf{Fixing a shape: proof of Lemma~\ref{lem:d1}}}

Let us prove Lemma \ref{lem:d1} for any fixed coral snake $\lambda$ of type $(m,n)$. Let $P$ and $P'$ be two positions of the robot with intersection shape $\lambda$. We begin by proving case (ii), which is more intricate, and then return to case (i).

\begin{proof}[Proof of Lemma~\ref{lem:d1}~$(ii)$]
Let $\lambda$ be a middle snake. 
 For $\ell(\lambda)+w(\lambda)-1 \leq k\leq n$ the following special positions of the robot (see Figure~\ref{fig:robots_specialpositions}) will play an important role in the proof:
  
 \begin{itemize}
\item $\LeftLambdak$: the position with componentwise maximum vertical labelling among all positions whose entries are less than or equal to $k$ and whose coral tableau contains the shape $\lambda$
.
\item  $\LefttLambdak$: the position with componentwise maximum vertical labelling among all positions whose entries are less than or equal to $k$,  whose coral tableau contains the shape $\lambda$, and whose intersection shape with $\LeftLambdak$ is $\lambda$.
\end{itemize}

Notice that $\LeftLambdak$ (resp. $\LeftLambdak^+$) consists of $n-k$ horizontal steps followed by the position analogous to $\LeftLambda$ (resp. $\LeftLambda^+$) for the robotic arm of length $k$.
In particular, $\LeftLambda = \LeftLambdaP{\lambda,n}$ and $\LefttLambda = \LefttLambdaP{\lambda,n}$. 
The description of the maximum distance between two robots with fixed intersection shape $\lambda$ will follow from the following three lemmas.

%Let $v\circ w$ be the $(P,P')$-vertical decompositions of $P$ 
%and $v'\circ w'$ be the $(P',P)$-vertical decompositions of $P'$.
%For simplicity, for a given $\overline k$, we let $k=\overline k-\ell(\lambda)-w(\lambda)+2$. and we write $\LefttLambdaktilde:=\LefttLambdaP{\lambda,\overline k}$; this tableau has maximum vertical label $\overline k$ and minimum label $k$.

%
%\begin{lemma}\label{lem:maximizingdistancewithgivenshape2}
%If $k=\min v\circ v'$ is the smallest label occurring in the intersection shape of $P$ and $P'$, then $d(P,P')\leq \max(d(\LeftLambda,\LefttLambdak),  d(\LefttLambda,\LeftLambdak))$.
%\end{lemma}

\begin{lemma}\label{lem:maximizingdistancewithgivenshape2}
For any positions $P$ and $P'$ with intersection shape $\lambda$ there exists $k$ such that 
 $d(P,P')\leq \max\{d(\LeftLambda,\LefttLambdak),  d(\LefttLambda,\LeftLambdak)\}$.
\end{lemma}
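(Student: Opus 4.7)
By Proposition~\ref{prop:distance}, $d(P,P') = |w| + |v-v'| + |w'|$, where $v\circ w$ and $v'\circ w'$ are the $(P,P')$- and $(P',P)$-decompositions respectively. My plan is to bound this quantity by replacing $P$ and $P'$ with certain extremal positions while controlling each of the three summands, and then to argue that the replacements can only increase the distance.

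Without loss of generality, assume the componentwise comparison of $v$ and $v'$ favors $v$, in the sense that no entry of $v$ is smaller than the corresponding entry of $v'$; the opposite case is symmetric and leads to the second branch of the $\max$. Under this assumption $|w| + |v - v'| = |w| + \sum_i v_i - \sum_i v'_i$, so the sum of the first two terms of $d(P,P')$ is a linear function of the vertical labels of $P$ minus $\sum_i v'_i$. The first quantity is maximized, over all positions whose coral tableau contains~$\lambda$, precisely by $P = \LeftLambda$, which by definition achieves the componentwise maximum vertical labelling. This replacement can only increase $|w| + |v-v'|$, and it does not affect $|w'|$, which depends only on $P'$.

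Next, choose $k$ so that the shifted position $\LefttLambdak$ has the same $w'$-part length as $P'$; concretely, $k := n - (l(\sh(P')) - l(\lambda))$ or an analogous expression compatible with the admissible range $l(\lambda)+w(\lambda)-1\leq k\leq n$. The ``$+$'' in the definition of $\LefttLambdak$ forces its intersection shape with $\LeftLambdak$ to be exactly~$\lambda$, and since $\LeftLambda$ and $\LeftLambdak$ share the same tail snake, the intersection shape of $(\LeftLambda, \LefttLambdak)$ is also exactly~$\lambda$. A direct label count using $v'_i = n-(p'_i-1)$ and the analogous formula for $w'$ will verify that $\LefttLambdak$ realizes the componentwise minimum admissible $v'$ and the componentwise maximum admissible $w'$ among all positions whose intersection shape with $\LeftLambda$ is $\lambda$; hence $|v-v'| + |w'|$ can only increase, completing the bound.

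The main obstacle I anticipate is the bookkeeping required to ensure that the intersection shape remains exactly $\lambda$ throughout the two replacements: a careless substitution can enlarge the intersection snake and invalidate the decomposition into $v\circ w$ and $v'\circ w'$. The role of the ``$+$'' superscript and of the horizontal prefix in $\LefttLambdak$ is precisely to realize the extremal configuration while pinning the intersection snake at $\lambda$, and justifying this rigorously --- together with checking that the chosen $k$ lies in the admissible range --- will likely require a careful case analysis based on whether $\lambda$ is a side or a middle snake, and on the alignment of the initial horizontal prefixes of $P$ and $P'$.
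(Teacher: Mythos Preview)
Your proposal has a genuine gap at the very first step. You assume ``without loss of generality'' that $v$ dominates $v'$ componentwise, so that $|v-v'| = \sum_i v_i - \sum_i v'_i$ becomes linear and can be maximized term by term. But this is not a valid symmetry reduction: the two vertical labellings $v$ and $v'$ of the common shape $\lambda$ can cross, with $v_i > v'_i$ for some indices and $v_j < v'_j$ for others. In that case neither branch of your dichotomy applies, $|v-v'|$ is not a linear function of the labels, and your maximization of $|w| + |v-v'|$ by passing to $\LeftLambda$ is unjustified.

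The paper's proof avoids this by making a genuinely symmetric and much weaker assumption --- only that the overall minimum entry among all of $v,v'$ lies in $v'$ --- and then performing a preliminary transformation: push the $\lambda$-portion of $P$ as far left as possible and the $\lambda$-portion of $P'$ as far right as possible while keeping that minimum fixed. This monotone push preserves $|w|$ and $|w'|$ and can only increase $|v-v'|$; crucially, \emph{after} the push the comparison is componentwise, and only then does one zigzag the tails to maximize $|w|$ and $|w'|$. The parameter $k$ then arises naturally as the largest label used in the pushed $P'$ (determined by the minimum $m$), not from matching the length of $w'$ as you propose. Your choice of $k$ and the claim that $\LefttLambdak$ simultaneously minimizes $v'$ and maximizes $w'$ would also need justification, but the componentwise-domination issue is the essential missing idea.
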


\begin{proof} Recall that
\begin{equation*}
d(P,P') = |w| + |v-v'| + |w'|.
\end{equation*}
We will perform two distance-increasing  transformations to the pair $(P,P')$ to obtain either
$(\LeftLambda,\LefttLambdak)$ or 
$(\LefttLambda,\LeftLambdak)$.
%These transformations increase the values of the terms in the distance formula \eqref{eq:distance} in Proposition~\ref{prop:distance}:
Without loss of generality assume that, among the vertical labels in $v$ and $v'$, the minimum $m$ is achieved in $v'$. 

The first transformation moves the vertical steps corresponding to $v$ in $P$ as far to the left as possible (by increasing their values in $v$), and moves the vertical steps corresponding to $v'$ in $P'$ as far to the right as possible (by decreasing their values in $v'$) while keeping the minimal label equal to~$m$. Denote by $P_1$ and $P_1'$ the resulting positions;  Figure~\ref{fig:robots_maximizing_distance} shows an example. This transformation preserves the values of $|w|$ and $|w'|$ in the formula and increases the value of $|v-v'|$. 
 
The second transformation changes the parts of $P_1$ and $P_1'$ after their intersection $\lambda$, in order to maximize $|w|$ and $|w'|$ while keeping $|v-v'|$ fixed. 
We need to keep the intersection shape equal to $\lambda$, so we must respect the up/down direction of the next vertical step in $P_1$ and $P_1'$ after $\lambda$; we convert the rest of $P_1$ and $P_1'$ into  ``zigzag snakes" subject to that constraint. The result is either the pair $(\LeftLambda,\LefttLambdak)$ or $(\LefttLambda,\LeftLambdak)$ for some $k$. %, depending on whether the next vertical step after $\lambda$ in $P_1$ (resp. $P_1'$) goes up (resp. down); this makes the labels in $w$ and $w'$ componentwise maximum as large as possible subject to the constraints.
%(see Figure~\ref{fig:robots_maximizing_distance}). At least one of these two pairs satisfies that the labels of the horizontal steps after the intersection are pairwise smaller than the horizontal steps after the intersection in $P_1$ and $P_2$. 
%Since the terms of  $|w|$ and $ |w'|$ are equal to $1+2+\dots + (k-1)$ minus the sum of the labels in the horizontal steps after the intersection then 
It follows that 
\[
d(P,P') \leq d(P_1,P_1') \leq \max\left\{ d(\LeftLambda,\LefttLambdak), d(\LefttLambda,\LeftLambdak) \right\},
\]
as desired. This construction is illustrated in Figure \ref{fig:robots_maximizing_distance}.
%By Lemma~\ref{lem:maximizingdistancewithgivenshape1}, this number is less than or equal to $d(\LeftLambda,\LefttLambdaktilde)$. 
\end{proof}

\begin{figure}[htbp]
	\centering
%	\begin{overpic}[width=0.9\textwidth, grid]{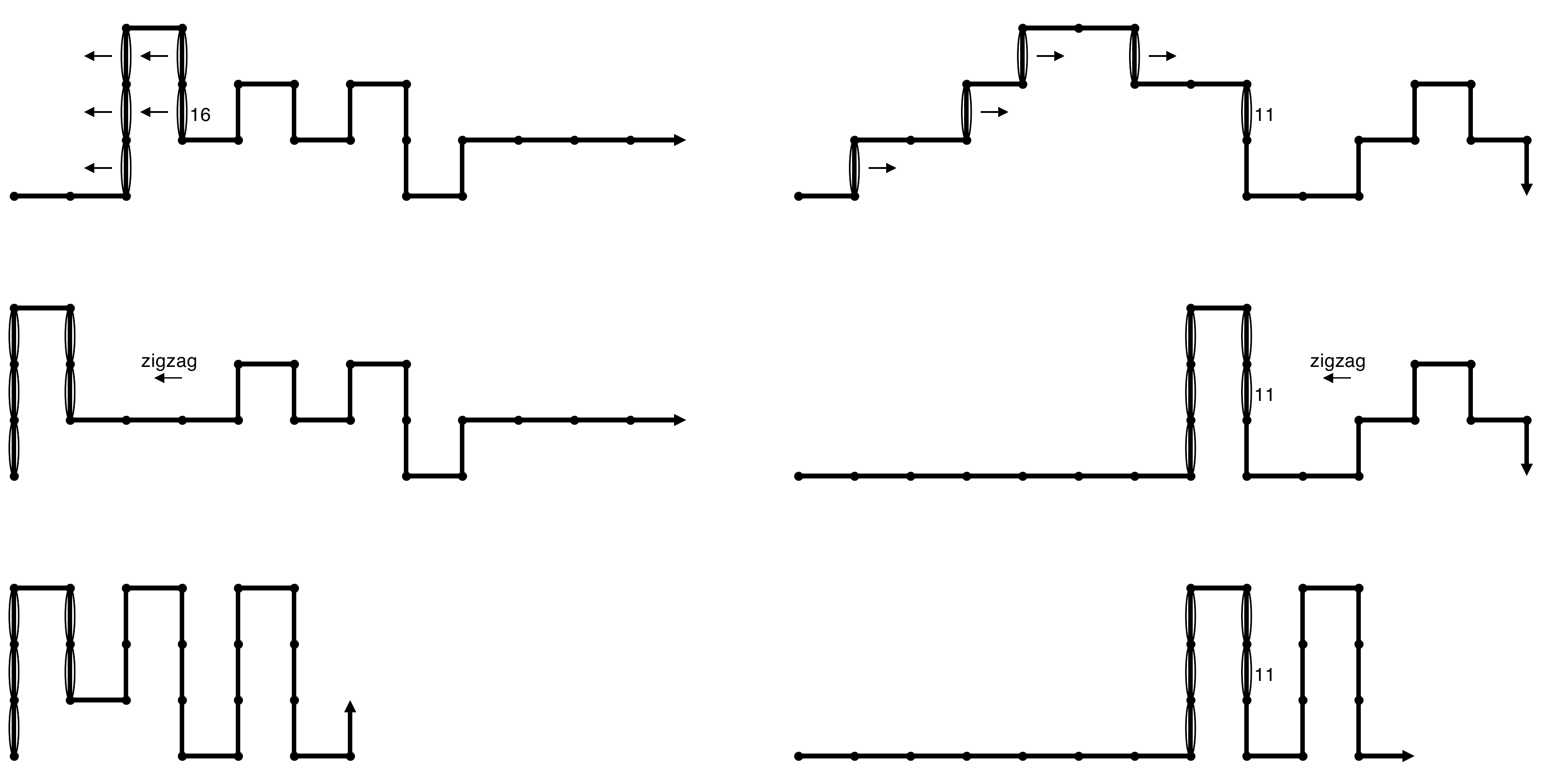}
	\begin{overpic}[width=0.9\textwidth]{robots_maximizing_distance}
		\put(23,48){\footnotesize $P$}
		\put(23,29.5){\footnotesize $P_1$}
		\put(23,10){\footnotesize $\LefttLambda$}
		\put(62,48){\footnotesize $P'$}
		\put(62,29.5){\footnotesize $P_1'$}
		\put(62,10){\footnotesize $\LeftLambdak$}
	\end{overpic}
	\caption{Increasing the distance between two robots with a given intersection shape.}
	\label{fig:robots_maximizing_distance}
\end{figure}

\begin{lemma}\label{lem:maximizingdistancewithgivenshape1}
$d(\LeftLambda,\LefttLambdak) \geq d(\LefttLambda,\LeftLambdak)$.
\end{lemma}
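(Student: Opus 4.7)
My plan is to apply Proposition~\ref{prop:distance} to both sides and reduce the inequality to a comparison of just two scalar quantities attached to the tails of the four positions.

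First I would observe that in each of the pairs $(\LeftLambda,\LefttLambdak)$ and $(\LefttLambda,\LeftLambdak)$, the intersection shape is $\lambda$ by construction. Hence Proposition~\ref{prop:distance} writes each distance as $|w|+|v-v'|+|w'|$, where the vertical labelling of each robot decomposes along $\lambda$. Crucially, each of the four robots has shape containing $\lambda$ as an initial sub-snake and has length $n$, and the label $n-i+1$ of the link in position $i$ depends only on $i$ and $n$. Because the positions in $\{1,\dots,n\}$ occupied by the first $\ell(\lambda)$ vertical steps are forced by $\lambda$ alone (the number of horizontal steps between consecutive vertical steps is prescribed by the snake), the $v$-parts of the four labellings all coincide. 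Therefore $|v-v'|=0$ in both pairs, and the inequality reduces to
\[
|w_{\LeftLambda}|+|w_{\LefttLambdak}| \;\geq\; |w_{\LefttLambda}|+|w_{\LeftLambdak}|,
\]
or equivalently
\[
|w_{\LeftLambda}|-|w_{\LefttLambda}| \;\geq\; |w_{\LeftLambdak}|-|w_{\LefttLambdak}|.
\]

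The second step is to describe the four tails explicitly. After tracing $\lambda$, the robot sits at a fixed row $r$ of the tunnel; $\LeftLambda$ and $\LefttLambda$ then extend greedily (vertical steps as early as possible) in the two opposite initial vertical directions, producing tails of the form ``initial run of length $m-r$ or $r-1$, followed by cycles of $m$ vertical steps separated by single horizontal steps''. The constrained tails $\LeftLambdak$ and $\LefttLambdak$ have the same structure but with the greedy zigzag preceded by a forced block of horizontal padding (of length $n-k$), so that they effectively execute the same kind of zigzag but for a shorter active length. Each $|w_X|$ is then the sum of the labels $n-i+1$ over the vertical positions $i$ of the pattern, which I would collect into arithmetic progressions, one per run.

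Finally, I would show that the difference $|w_{\LeftLambda}|-|w_{\LefttLambda}|$ is a non-decreasing function of the length $N=n-\ell(\lambda)$ over which the greedy zigzag is allowed to run. This encodes the intuition that the asymmetry between starting ``up'' and starting ``down'' from row $r$ accumulates one cycle at a time, so that truncating the tail (as in $\LeftLambdak,\LefttLambdak$) can only shrink the gap. The main obstacle will be the case analysis at the truncation point: depending on whether the boundary of the active zigzag lands inside an up-run, a down-run, or on a horizontal step, the residual terms change. I would handle this by pairing up complete length-$(m+1)$ cycles between the two sides, so that all complete cycles contribute identical terms and cancel in the difference, and then bounding the remaining initial and terminal partial runs by a short direct computation. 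This reduces the inequality to a finite check depending only on the row $r$ and on the residues of $N$ and $k-\ell(\lambda)$ modulo $m+1$, which is straightforward to complete.
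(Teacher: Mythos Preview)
Your reduction to the $|w|$-terms is the right move, but the justification you give for it is not quite correct. The claim that ``the $v$-parts of the four labellings all coincide'' is false: $\LeftLambda$ and $\LefttLambda$ begin the $\lambda$-portion immediately, whereas $\LeftLambdak$ and $\LefttLambdak$ are preceded by $n-k$ horizontal steps, so their $\lambda$-labels are uniformly shifted by $n-k$. What \emph{is} true is that $v_{\LeftLambda}=v_{\LefttLambda}$ and $v_{\LeftLambdak}=v_{\LefttLambdak}$, so in both pairs $|v-v'|$ equals the same quantity $\ell(\lambda)(n-k)$ and cancels. With this correction your reduction to
\[
|w_{\LeftLambda}|-|w_{\LefttLambda}| \;\geq\; |w_{\LeftLambdak}|-|w_{\LefttLambdak}|
\]
stands.

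For the remaining inequality, your plan of expanding each $|w|$ as a sum of arithmetic progressions and doing a residue-by-residue case analysis will work, but it is more laborious than necessary. The paper avoids this by passing to the \emph{horizontal} labels in the tail. Since the total of all labels in a tail is fixed, one has $|w_{\LeftLambda}|-|w_{\LefttLambda}| = |h^+|-|h|$ and similarly for the $k$-versions, where $h,h^+,h_k,h_k^+$ are the vectors of horizontal labels after $\lambda$. The zigzag structure gives $(h-h_k)_i=\min(h_i,\,n-k)$ and $(h^+-h_k^+)_i=\min(h_i^+,\,n-k)$ (padding the shorter vector with zeros). Because $\LeftLambda$ heads toward the farther wall first, its $i$th horizontal step comes no earlier than that of $\LefttLambda$, so $h_i\le h_i^+$ under the natural injection; applying $\min(\cdot,n-k)$ preserves this domination, and summing gives $|h^+-h_k^+|\ge|h-h_k|$ in one line. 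This replaces your truncation case analysis by a single monotonicity observation.
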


\begin{proof}
For simplicity, let $\ell=n-k$ be the number of initial horizontal steps of $\LeftLambdak$ and~$\LefttLambdak$ (if $\lambda$ is non-empty). 
As above, label the links of the robot in decreasing order from~$n$ up to~$1$ along the shape of the robot.
Let $h,h^+,h_k,h_k^+$ be the restriction of this labelling to the horizontal links located after the last vertical link corresponding to $\lambda$ in each of the robots $\LeftLambda$, $\LefttLambda$, $\LeftLambdak$, $\LefttLambdak$ respectively. 

The vector $h - h_k$ (resp. $h^+ - h_k^+$) consists of an initial sequence of $(n-k)$s followed by the entries of $h$ (resp. $h^+$) that are less than $n-k$. 
Consider the injection taking the $i$th horizontal step of $L_\lambda$ to the $i$th horizontal step of $L_\lambda^+$, which has a weakly larger label.
Under this injection, each term of $h-h_k$ is dominated by the corresponding term in $h^+ - h_k^+$. It follows that $(|h^+-h_k^+|) - (|h-h_k|) \geq 0$.
%We denote by $Z$ (resp. $Z^+$) the labelling obtained from $h$ (resp. $h^+$) by replacing each entry $x$ by $\min\{x,\ell\}$.
%If we denote by $|Z|$ and $|Z^+|$ the sum of their labels, a computation from Proposition~\ref{prop:distance} shows that 
%\begin{align*}
%d(\LeftLambda, \LefttLambdak) - d(\LefttLambda, \LeftLambdak) &= (|h^+|+|h_k|) - (|h|+|h_k^+|)\\
%&= (|h^+-h_k^+|) - (|h-h_k|)\\
%&= |Z^+| - |Z|. 
%\end{align*}
%In our example in Figure~\ref{fig:robots_specialpositions}: 
%\[
%d(\LeftLambda, \LefttLambdak) - d(\LefttLambda, \LeftLambdak) = 126 - 123 = 3, 
%\]  
%and 
%\[
%|Z^+| - |Z| = (\ell+\ell+\ell) - (\ell+\ell+1) = \ell -1 = 4-1 = 3. 
%\]  
%
%Note that the consecutive horizontal steps in $Z$ are equally separated from each other along the robot. This is also true for $Z^+$ except for its first two horizontal steps which are at a smaller distance. Since the horizontal steps in $Z^+$ appear before to those in $Z$ then $|Z^+| - |Z|\geq 0$. Equality holds for example when $\lambda = \square$, $(m,n)=(2,5)$ and $k=5$. 
\end{proof}
%
%\federico{Cesar: No entiendo el argumento de $Z$ y $Z^+$, pero creo que si entiendo esto por que funciona. Que tal si ponemos algo asi:
%
%The vector $h - h_k$ (resp. $h^+ - h_k^+$) consists of an initial sequence of $(n-k)$s followed by the entries of $h$ (resp. $h^+$) that are less than $n-k$. 
%Consider the injection taking the $i$th horizontal step of $L_\lambda$ to the $i$th horizontal step of $L_\lambda^+$, which has a weakly larger label.
%Under this injection, each term of $h-h_k$ is dominated by the corresponding term in $h^+ - h_k^+$. It follows that $(|h^+-h_k^+|) - (|h-h_k|) \geq 0$.}
%\cesar{Si, me parece bien tu argunmento}

\begin{lemma}\label{lem:maximizingdistancewithgivenshape3}
For a fixed middle snake $\lambda$, $d(\LeftLambda,\LefttLambdak)$ is  concave up as a function of $k$, and therefore attains its maximum value either at $k=l(\lambda) + w(\lambda)-1$ or at $k=n$.
\end{lemma}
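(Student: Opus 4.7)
The plan is to apply Proposition~\ref{prop:distance} to write $f(k):=d(L_\lambda, L_{\lambda,k}^+) = |w| + |v - v_k'| + |w_k'|$, and then show that each of the three summands is a convex function of $k$. Since a convex function on the interval $[l(\lambda)+w(\lambda)-1,\,n]$ attains its maximum at an endpoint, this will imply the lemma. Throughout, let $\mu_k'$ denote the shape of $L_\lambda^+$ considered as a robot of length $k$.

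First I would show that the intersection shape of $L_\lambda$ and $L_{\lambda,k}^+$ is exactly $\lambda$ for every admissible $k$, so that $v$ and $w$ are independent of $k$ and $|w|$ is constant. The shape $\mu_k'$ is an initial subsnake of $\mu_n'$, whose intersection with $\sh(L_\lambda)$ is $\lambda$ by definition of $L_\lambda^+$; and since $\lambda \preceq \mu_k'$ the intersection cannot drop below $\lambda$, so it equals $\lambda$. Moreover, since the $n-k$ initial horizontal steps of $L_{\lambda,k}^+$ contribute no vertical labels, the vertical labelling of $L_{\lambda,k}^+$ agrees with that of $L_\lambda^+$ at length $k$. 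The first $l(\lambda)$ vertical steps of $L_\lambda^+$ and of $L_\lambda$ lie at the same positions $q_1<\cdots<q_{l(\lambda)}$ along their respective robots, because both trace out the snake $\lambda$ from the base; so $v_i = n-q_i+1$ and $v'_{k,i} = k-q_i+1$. Therefore every entry of $v-v_k'$ equals $n-k\geq 0$ and
\[
|v - v_k'| = l(\lambda)\,(n-k),
\]
which is linear, and in particular convex, in $k$.

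The key step is the convexity of $|w_k'|$. Because $L_\lambda^+$ of length $k+1$ extends $L_\lambda^+$ of length $k$ by a single step appended at the end, every label of the shorter robot shifts up by $1$ when read inside the longer one, and a new label equal to $1$ appears exactly when the appended step is vertical. A direct bookkeeping then yields
\[
|w_{k+1}'| - |w_k'| = l(\mu_{k+1}') - l(\lambda),
\]
whose right-hand side is weakly increasing in $k$ because $\mu_k'$ gains a box precisely when a vertical step is appended. Hence the forward differences of $|w_k'|$ are non-decreasing, so $|w_k'|$ is convex. Summing the three contributions gives the convexity of $f(k)$, which completes the proof.

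The main obstacle is the telescoping identity for $|w_k'|$: it requires carefully tracking how the vertical labels of $L_\lambda^+$ evolve as the arm is lengthened by one step. Once the ``snake $\lambda$ followed by a zigzag'' structure of $L_\lambda^+$ is in hand, everything else is a direct application of Proposition~\ref{prop:distance} and elementary properties of sums of convex functions.
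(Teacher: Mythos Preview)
Your proof is correct and follows essentially the same route as the paper: both apply Proposition~\ref{prop:distance} and establish convexity by computing the forward difference $f(k+1)-f(k)$ in terms of the number of vertical steps of $L^+_{\lambda,k}$ lying beyond $\lambda$, which is monotone in~$k$. Your presentation is slightly more explicit in that it splits $f(k)=|w|+|v-v_k'|+|w_k'|$ and checks convexity of each summand separately, but the telescoping identity $|w_{k+1}'|-|w_k'|=\ell(w_{k+1}')=l(\mu_{k+1}')-l(\lambda)$ is exactly the computation driving the paper's argument.
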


\begin{proof}
It suffices to show that the (possibly negative) difference $d(\LeftLambda,\LefttLambdaP{\lambda,k+1})-d(\LeftLambda,\LefttLambdak)$ is increasing for every $k$ where it is defined. Let $v^+_k\circ w^+_k$ be the $(\LeftLambda,\LefttLambdak)$-decomposition of $\LefttLambdak$, and $\ell(w^+_k)$ be the number of entries in $w^+_k$. If $\ell(\lambda)$ is the number of boxes of the intersection shape, then 
\[
d(\LeftLambda,\LefttLambdaP{\lambda,k+1})-d(\LeftLambda,\LefttLambdak) = 
\ell(\lambda) - \ell(w^+_k).
\]
The result follows from the fact that $\ell(w^+_k)$ is decreasing in $k$ and $\ell(\lambda)$ is constant.
\end{proof}

We finally have all the ingredients to complete the proof of Lemma~\ref{lem:d1}(ii). 
Let $\lambda$ be a middle snake. 
By Lemmas~\ref{lem:maximizingdistancewithgivenshape2} and ~\ref{lem:maximizingdistancewithgivenshape1},
\[
d(P,P') \leq d(\LeftLambda,\LefttLambdak)
\] 
for some value $k$.
By Lemma~\ref{lem:maximizingdistancewithgivenshape3}, the maximum of $d(\LeftLambda,\LefttLambdak)$ is achieved either at $k=n$ or $k=\ell(\lambda)+w(\lambda)-1$. These two cases correspond precisely to $d(\LeftLambda,\LefttLambda)$ and $d(\LeftLambda,\HLambda)$ respectively. Therefore, 
\[
d(P,P') \leq \max\{ d(\LeftLambda,\HLambda), d(\LeftLambda,\LefttLambda) \}
\]
as we wished to show. 
\end{proof}

Now we go back to case (i), which is easier.

\begin{proof}[Proof of Lemma~\ref{lem:d1}~$(i)$]
If $\lambda$ is a side snake, then either $\shape(P)=\lambda$ or $\shape(P')=\lambda$, otherwise their intersection shape would be bigger than $\lambda$. 
For simplicity assume ${\shape(P')=\lambda}$. 
Again, we transform $P$ and $P'$ to increase their distance: 
%One can move the vertical steps of $P$ and $P'$ increasing their distance: 
we move the vertical steps corresponding to $\lambda$ in $P$ as far to the left as possible, and the vertical steps of $P'$ as far to the right as possible. This transformation preserves the values of $|w|$ and $|w'|$ in the formula and increases the value of $|v-v'|$. In addition, transform the part of the modified $P$ after $\lambda$ in order to maximize $|w|$. The result is the pair $(\LeftLambda,\HLambda)$.
\end{proof}

\subsubsection{\textsf{Varying the shape: proof of Lemma~\ref{lem:d2}}}

\begin{proof}[Proof of Lemma~\ref{lem:d2}~$(i)$]
%We need to show that the distances $d(\LeftLambda,\HLambda)$ and $d(\LeftLambda,\LefttLambda)$ are bigger when the intersection shape $\lambda$ is small. It is easy to check from 
By Proposition~\ref{prop:distance}, removing the last box of $\lambda$ increases the distance $d(\LeftLambda,\HLambda)$ by $\ell(\lambda)$. Therefore $\lambda \prec \lambda'$ implies $d(\LeftLambda,\HLambda) > d(L_{\lambda'},H_{\lambda'})$.
\end{proof}

To prove part (ii) we 
%\begin{proof}[Proof of Lemma~\ref{lem:d2}~$(ii)$] We 
use a basic lemma.

\begin{lemma}\label{lem:numberswithgaps}
Let $A$ be a subsequence of $(p,\dots, 2,1)$ obtained by removing an arithmetic progression with common difference $c$, and let $A'$ be a  subsequence of $(p',\dots, 2,1)$ obtained by removing an arithmetic progression with common difference $c$. If $p>p'$ then $|A| \geq|A'|$. 
\end{lemma}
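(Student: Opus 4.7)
My plan is to establish the lemma by induction on $p - p'$, reducing to the case of a single-element extension of the ground set. Write $|A| = \binom{p+1}{2} - S$, where $S$ is the sum of the arithmetic progression removed from $\{1, \ldots, p\}$, and similarly $|A'| = \binom{p'+1}{2} - S'$. The inequality $|A| \geq |A'|$ is then equivalent to $\binom{p+1}{2} - \binom{p'+1}{2} \geq S - S'$, and it suffices to show that $|A|$ is non-decreasing as $p$ grows by $1$, with the AP kept structurally aligned in the way dictated by the application.

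The single-step analysis splits cleanly into two cases. If the new top element $p+1$ is not included in the AP for the enlarged ground set, then $A$ simply gains the element $p+1$, so $|A|$ grows by $p+1 > 0$. If $p+1$ is included in the AP (which, since the AP has common difference $c$, can occur only when the previous top of the AP was $p+1-c$), then the new AP is obtained from the old AP by appending $p+1$; hence $A$ is unchanged and $|A|$ stays the same. In either case $|A|$ is non-decreasing in $p$, and iterating from $p'$ up to $p$ yields $|A| \geq |A'|$.

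The main obstacle is tracking the AP's alignment across the induction: a completely unconstrained pair of APs with common difference $c$ does not satisfy the inequality, so one must use the context of the lemma. In its application inside the proof of Lemma~\ref{lem:d2}(ii), both APs arise as the horizontal-link labels of (shifted) left-justified tails of the robotic arm, sharing the common difference $c = m+1$ and a canonical top term determined by the geometry of the tail. Under this canonical choice, each inductive step falls unambiguously into one of the two cases above, and the proof closes. One could alternatively make the closed-form computation explicit by writing $|A| = \binom{p+1}{2} - k a - c\binom{k}{2}$ for the removed AP $\{a, a+c, \ldots, a+(k-1)c\}$ and computing $|A^{(p+1)}| - |A^{(p)}|$ in each case; both come out to be manifestly non-negative expressions in $c$, $k$, and the top of the AP.
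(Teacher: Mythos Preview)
Your proof is correct, and you are right to flag that the lemma as literally stated is false without the contextual constraint on how the two arithmetic progressions are aligned; your inductive scheme works precisely because, in the application, passing from $p$ to $p+1$ either leaves the removed progression unchanged or appends the new top element to it.

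The paper's argument is different and shorter: it simply asserts that if $A$ and $A'$ have $a$ and $a'$ elements respectively, then $a \geq a'$ and, listed in decreasing order, the $i$-th term of $A$ dominates the $i$-th term of $A'$ for $i \leq a'$; summing gives $|A| \geq |A'|$. This term-by-term domination is again only true under the same canonical-alignment hypothesis you isolate (one can build counterexamples with arbitrary APs of common difference $c$), so both proofs rest on the same unstated structural assumption coming from the robot geometry. The trade-off is that the paper's comparison is a one-line argument once the domination is granted, whereas your induction makes the needed hypothesis more visible and checks it step by step. Either route is fine here; yours has the virtue of being explicit about exactly which feature of the application is being used.
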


\begin{proof}
Say $A$ and $A'$ have $a$ and $a'$ elements, respectively. 
When listed in decreasing order, the largest $a'$ terms of $A$ are greater than or equal to the $a'$ terms of $A'$. The remaining $a-a'$ terms (if there are any) are positive. Therefore $|A| \geq |A'|$.
\end{proof}

\begin{proof}[Proof of Lemma~\ref{lem:d2}~$(ii)$]
Assume that $\lambda \prec \lambda'$ are two middle snakes. We need to show that $d(\LeftLambda,\LefttLambda) = |w| + |v-v'| + |w'|$ decreases as we increase $\lambda$. 
% We will prove that 
%$d(\LeftLambda,\LefttLambda)>d(\LeftLambdaP{\lambda'},\LefttLambdaP{\lambda'})$. 
The term 
%Note that for the pair $(\LeftLambda,\LefttLambda)$ we have 
$|v-v'|=0$ 
stays constant.
The tem $|w|$ clearly does not decrease when we make $\lambda$ larger. Finally, the term $|w'|$ also does not decrease when we make $\lambda$ thanks to Lemma~\ref{lem:numberswithgaps}. The desired result follows.
\end{proof}

\section{\textsf{Implementation of the shortest path algorithm}\label{sec:algorithm}}

Since the configuration space of the robotic arm $R_{m,n}$ is CAT(0), we are able to navigate it thanks to the following theorem. 

\begin{reptheorem}{th:alg} \cite{AG, ABY, Re} 
If the configuration space of a robot is a $\mathrm{CAT}(0)$ cubical complex, there is an explicit algorithm to move the robot optimally from one position to another, in:

\noindent $\bullet$ \emph{the edge metric}: the number of moves, if only one move at a time is allowed,

\noindent $\bullet$ \emph{the cube metric}: the number of steps (where in each step we may perform several physically independent moves),

\noindent $\bullet$ \emph{the time metric}: the time elapsed if we may perform independent moves simultaneously.

\end{reptheorem}

As shown in \cite{AOS}, there is also a (more complicated) algorithm to move the robot optimally in terms of the Euclidean metric in the configuration space. However, this metric is less relevant, as it does not seem to have a natural interpretation in terms of the robot.

The algorithms of Theorem \ref{th:alg} are described in detail in~\cite{ABY}; they are based on the \emph{normal cube paths} of Niblo and Reeves \cite{NR}. 
We have implemented these algorithms in Python for the robotic arms discussed in this paper. 
To use the program, the user inputs the width of the tunnel and two positions of the robotic arm of the same length, expressed as sequences of steps of the form: r (right), u (up), d (down) steps.
The program outputs the distance between the two states in the edge metric and in the cube metric\footnote{It turns out that the optimal paths in the cube metric are also optimal in the time metric.}, and an animation taking the robot between the two states. 
The downloadable code, instructions, and a sample animation are at \href{http://math.sfsu.edu/federico/Articles/movingrobots.html}{\texttt{http://math.sfsu.edu/federico/Articles/movingrobots.html}}.
%
%\begin{center}
%\href{http://math.sfsu.edu/federico/Articles/movingrobots.html}{\texttt{http://math.sfsu.edu/federico/Articles/movingrobots.html}}
%%tinyurl.com/robotarmmotion
%\end{center}

\section{\textsf{Future directions and open problems}}

\begin{itemize}

\item
Our robotic arms $R_{m,n}$ can never have links facing left. 
Now consider the robotic arm of length $n$ in a tunnel of width $m$ whose links may face up, down, right, \textbf{or left}, starting in the fully horizontal position, and subject to the same kinds of moves: switching corners and flipping the end. Is the configuration space of this robot also CAT(0)? In \cite{ABCG1}, we show the answer is affirmative for width $m=2$. We believe this is also true for larger $m$, but proving it in general seems to require new ideas.

\item
Theorem \ref{thm:diameter} gives the diameter of the configuration space $\S_{m,n}$ in the edge metric, that is, the largest number of moves separating two positions of the robot $R_{m,n}$.
What is the diameter of $\S_{m,n}$ in the cube metric, that is, the largest number of steps separating two positions of the robot $R_{m,n}$, where a step may consist of several physically independent moves?

\end{itemize}

\section{\textsf{Acknowledgements}}
\label{sec:ack}
This paper includes results from HB's undergraduate thesis at Universidad del Valle in Cali, Colombia (advised by FA and CC) and JG's undergraduate project at San Francisco State University  in San Francisco, California (advised by FA). We thank the SFSU-Colombia Combinatorics Initiative, which made this collaboration possible. We would also like to thank the Pacific Ocean for bringing us the beautiful coral that inspired Definition \ref{def:snakePIP} and the proof of our main Theorem~\ref{conj:PIP}.

\bibliographystyle{amsalpha}

\end{document}